\renewcommand{\eprint}[1]{\IfBeginWith{#1}{arXiv}{\href{https://arxiv.org/abs/#1}{#1}}{\href{#1}{#1}}}
\DeclareSymbolFontAlphabet{\amsbb}{AMSb}
\renewcommand{\mathbb}[1]{\amsbb{#1}}
\theoremstyle{plain}
\newtheorem{thm}{Theorem}[subsection]
\newtheorem{lemma}[thm]{Lemma}
\newtheorem{propn}[thm]{Proposition}
\newtheorem{cor}[thm]{Corollary}
\newtheorem*{thm*}{Theorem}
\newtheorem*{conjecture*}{Conjecture}
\newtheorem*{goal*}{Goal}
\newtheorem*{question*}{Question}
\newtheorem*{prethm*}{Pretheorem}
\theoremstyle{definition}
\newtheorem{defn}[thm]{Definition}
\newtheorem{ex}[thm]{Example}
\newtheorem{notation}[thm]{Notation}
\newtheorem{remark}[thm]{Remark}
\theoremstyle{remark}
\newcommand{\txt}[1]{\ensuremath{\text{\textup{#1}}}}
\newcommand{\catname}[1]{\txt{#1}}
\newcommand{\Cat}{\catname{Cat}}
\newcommand{\CatI}{\catname{Cat}_\infty}
\newcommand{\Opd}{\catname{Opd}}
\newcommand{\OpdI}{\Opd_{\infty}}
\newcommand{\Fun}{\txt{Fun}}
\newcommand{\Map}{\txt{Map}}
\newcommand{\op}{\txt{op}}
\DeclareMathOperator{\Seg}{Seg}
\newcommand{\IFF}{if and only if}
\newcommand{\icat}{$\infty$-category}
\newcommand{\icats}{$\infty$-categories}
\newcommand{\icatl}{$\infty$-categorical}
\newcommand{\igpd}{$\infty$-groupoid}
\newcommand{\igpds}{$\infty$-groupoids}
\newcommand{\iopd}{$\infty$-operad}
\newcommand{\iopds}{$\infty$-operads}
\newcommand{\ie}{i.e.\@}
\newcommand{\isoto}{\xrightarrow{\sim}}
\newcommand{\isofrom}{\xleftarrow{\sim}}
\newcommand{\xto}[1]{\xrightarrow{#1}}
\newcommand{\from}{\leftarrow}
\newcommand{\xfrom}[1]{\xleftarrow{#1}}
\newcommand{\blank}{\text{\textendash}}
\newcommand{\id}{\txt{id}}
\DeclareMathOperator{\colimP}{colim}
\newcommand{\colim}{\mathop{\colimP}}
\newcommand{\simp}{\bbDelta}
\newcommand{\Dop}{\simp^{\op}}
\newcommand{\angled}[1]{\langle #1 \rangle}
\def\@tocline#1#2#3#4#5#6#7{\relax
  \ifnum #1>\c@tocdepth % then omit
  \else
    \par \addpenalty\@secpenalty\addvspace{#2}%
    \begingroup \hyphenpenalty\@M
    \@ifempty{#4}{%
      \@tempdima\csname r@tocindent\number#1\endcsname\relax
    }{%
      \@tempdima#4\relax
    }%
    \parindent\z@ \leftskip#3\relax \advance\leftskip\@tempdima\relax
    \rightskip\@pnumwidth plus4em \parfillskip-\@pnumwidth
    #5\leavevmode\hskip-\@tempdima
      \ifcase #1
       \or \hskip -1em \or \hskip 1em \or \hskip 3em \else \hskip 5em \fi%
      #6\nobreak\relax
    \hfill\hbox to\@pnumwidth{\@tocpagenum{#7}}
      \par
    \nobreak
    \endgroup
  \fi}
\numberwithin{equation}{section}
\title{$\infty$-Operads as Symmetric Monoidal $\infty$-Categories}
\author{Rune Haugseng}
\address{Norwegian University of Science and Technology (NTNU),
  Trondheim, Norway}
\urladdr{http://folk.ntnu.no/runegha}
\author{Joachim Kock}
\address{Universitat Aut\`{o}noma de Barcelona and Centre de Recerca Matem\`atica, 
Spain}
\urladdr{http://mat.uab.cat/~kock/}
\date{\today. JK gratefully acknowledges support from grants MTM2016-80439-P
  (AEI/FEDER, UE) of Spain and 2017-SGR-1725 of Catalonia, and
  was also supported through the Severo Ochoa and Mar\'ia de
  Maeztu Program for Centers and Units of Excellence in R\&D
  grant number CEX2020-001084-M}
\newcommand{\xF}{\mathbb{F}}
\newcommand{\xint}{\txt{int}}
\newcommand{\act}{\txt{act}}
\newcommand{\el}{\txt{el}}
\newcommand{\SMC}{\txt{SMCat}}
\newcommand{\SMCI}{\SMC_{\infty}}
\newcommand{\CSeg}{\txt{CSeg}}
\newcommand{\Act}{\txt{Act}}
\newcommand{\Env}{\txt{Env}}
\newcommand{\CMon}{\txt{CMon}}
\DeclareMathOperator{\Sym}{Sym}
\newcommand{\PROPI}{\txt{PROP}_{\infty}}
\newcommand{\DF}{\simp_{\xF}}
\newcommand{\DFop}{\DF^{\op}}
\begin{document}

\begin{abstract}
  We use Lurie's symmetric monoidal envelope functor to give two new
  descriptions of $\infty$-operads: as certain symmetric monoidal
  $\infty$-categories whose underlying symmetric monoidal
  $\infty$-groupoids are free, and as certain symmetric monoidal
  $\infty$-categories equipped with a symmetric monoidal functor to
  finite sets (with disjoint union as tensor product). The latter
  leads to a third description of $\infty$-operads, as a localization of a
  presheaf $\infty$-category, and we use this to give a simple proof
  of the equivalence between Lurie's and Barwick's models for
  $\infty$-operads.
\end{abstract}

\maketitle

\tableofcontents

\section{Introduction}

The close relationship between symmetric monoidal categories and
(symmetric) operads goes back to the birth of operad theory in
algebraic topology: both operads \cite{MayGeomIter} and PROPs
\cite{MacLane:props}, which are special class of symmetric monoidal
categories, were introduced to describe homotopy-coherent algebraic
structures on topological spaces, and it was quickly realized that
operads could be viewed as a special kind of PROP (see for instance
the discussion in Adams's book \cite{AdamsInfLoop}*{\S 2.3}, or
\cite{KellyOpd}*{\S 7}). The relation has also been analysed in the
context of logic and computer science, notably by
Hermida~\cite{Hermida:repr-mult}.

In the setting of $\infty$-categories, the relationship between
$\infty$-operads and symmetric monoidal $\infty$-categories is more
pronounced, since in the approach of Lurie~\cite{HA} both notions are
defined as certain functors to the category $\xF_{*}$ of finite
pointed sets\footnote{Also known as Segal's category $\Gamma^{\op}$.};
for operads this corresponds to the construction of categories of
operators of May--Thomason~\cite{May-Thomason}. There is then an
evident forgetful functor
\[ U \colon \SMCI \to \OpdI \]
where $\OpdI$ is the \icat{} of \iopds{} and $\SMCI$ is that of
symmetric monoidal \icats{}. Lurie~\cite{HA}*{\S 2.2.4} established an
adjunction
\[
\begin{tikzcd}
\OpdI \ar[r, shift left, "\Env"] & \SMCI \ar[l, shift left, "U"]    ,
\end{tikzcd}
\]
where the left adjoint $\Env$ is given by an explicit construction,
the \emph{symmetric monoidal envelope} of an \iopd{}.  Neither of the
two functors is fully faithful, though, and so does not immediately
exhibit one notion as a special case of the other.\footnote{For $U$,
  this is because morphisms between symmetric monoidal \icats{} in
  $\OpdI$ corrrespond to \emph{lax} symmetric monoidal functors.}

In the present contribution, we exploit this adjunction to establish
new conceptually simple characterizations of $\infty$-operads, leading
to an easy proof of the equivalence between Lurie's and Barwick's
notions of $\infty$-operads.

\subsection{Overview}
\label{sec:results}

We start out by tweaking
the adjunction in two ways, so as to give
two new characterizations of $\infty$-operads in terms of symmetric monoidal 
$\infty$-categories:

\begin{thm}[Cf.~Propositions~\ref{propn:iopdimage} and 
  \ref{propn:counit-hereditary}]\label{thm:iopdenv} 
  The symmetric monoidal envelope gives an equivalence between
  $\infty$-operads in the sense of Lurie~\cite{HA}
  and
  \begin{enumerate}[(1)]
  \item symmetric monoidal \icats{} $\mathcal{C}^\otimes$ with a
    symmetric monoidal functor to $\xF^{\amalg}$, the category of
    finite sets with disjoint union as tensor product, such that
    \begin{enumerate}[(a)]
    \item every
      object of $\mathcal{C}$ is equivalent to a tensor product of
      objects that lie over the terminal object $\mathbf{1}$ in $\xF$,
    \item condition ($\ast$) below holds for
      any objects $x_{1},\ldots,x_{n}$ and $y_{1},\ldots,y_{m}$ that
      lie over $\mathbf{1}$.
    \end{enumerate}
  \item symmetric monoidal \icats{} $\mathcal{C}^\otimes$
    with a map of \igpds{} $X \to \mathcal{C}^{\simeq}$
    such that
    \begin{enumerate}[(a)]
    \item the underlying symmetric
      monoidal \igpd{} of $\mathcal{C}^{\otimes}$ is free on $X$, \ie{} the 
      induced morphism $\Sym(X)
      \simeq \coprod_{n =0}^{\infty} X^{\times n}_{h\Sigma_{n}} \to \mathcal{C}^\simeq$
      is an equivalence,
    \item condition ($\ast$) below holds
      for any objects $x_{1},\ldots,x_{n}$ and
      $y_{1},\ldots,y_{m}$ in $X$.
    \end{enumerate}
  \item[($\ast$)]
  The morphism
      \[ \coprod_{\phi \in
          \Map_{\xF}(\mathbf{n},\mathbf{m})} \prod_{i=1}^{m}
        \Map_{\mathcal{C}}\left(\bigotimes_{j \in \phi^{-1}(i)} x_{j},
          y_{i}\right) \to \Map_{\mathcal{C}}\left(\bigotimes_{j=1}^{n}
          x_{j}, \bigotimes_{i=1}^{m} y_{i}\right),\]
      given by tensoring maps together, is an equivalence.
    \end{enumerate}
  \end{thm}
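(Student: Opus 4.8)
The plan is to derive both descriptions from Lurie's envelope adjunction $\Env\dashv U$ together with the explicit form of the envelope: the underlying \icat{} of $\Env(\mathcal{O})$ has as objects the finite sequences $(o_{1},\ldots,o_{n})$ of objects of $\mathcal{O}$, the tensor product is concatenation, and a morphism $(x_{1},\ldots,x_{n})\to(y_{1},\ldots,y_{m})$ consists of a map $\phi\colon\mathbf{n}\to\mathbf{m}$ in $\xF$ together with, for each $i$, a morphism $\bigotimes_{j\in\phi^{-1}(i)}(x_{j})\to(y_{i})$ with length-one target, i.e.\ a multimorphism of $\mathcal{O}$. Thus $(o_{1},\ldots,o_{n})\simeq(o_{1})\otimes\cdots\otimes(o_{n})$, the length-one objects are precisely $\mathcal{O}^{\simeq}$, and --- after equipping $\Env$ with the symmetric monoidal functor to $\xF^{\amalg}\simeq\Env(\xF_{*})$ obtained by applying $\Env$ to the terminal map $\mathcal{O}\to\xF_{*}$ (the commutative \iopd{}), under which the objects over $\mathbf{1}$ are exactly the length-one ones --- these formulas already show that $\Env(\mathcal{O})$ satisfies $(\ast)$ and the conditions (a) of both (1) and (2), with $X=\mathcal{O}^{\simeq}$. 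So the real content is the converse: every such $\mathcal{C}^{\otimes}$ is an envelope.

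First I would check that, viewed as a functor $\OpdI\to\SMCI_{/\xF^{\amalg}}$, the envelope is fully faithful. By the adjunction, a mapping space in $\SMCI_{/\xF^{\amalg}}$ out of $\Env(\mathcal{O})$ is computed in $\OpdI$, and since $\xF_{*}$ is terminal the slice over $\xF^{\amalg}$ turns the fibre over the structure map into a fibre product of \iopds{}; so it suffices that $\mathcal{O}\to U\Env(\mathcal{O})\times_{U\xF^{\amalg}}\xF_{*}$ be an equivalence. Limits of \iopds{} are detected on colours and multi-mapping spaces, so this is immediate from the description above: the colours of $U\Env(\mathcal{O})$ lying over $\mathbf{1}$ are the length-one sequences, hence the colours of $\mathcal{O}$, and a multimorphism of $U\Env(\mathcal{O})$ with length-one target is a single multimorphism of $\mathcal{O}$, while $U\xF^{\amalg}$ contributes only a point.

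Next, to pin down the essential image, given $(\mathcal{C}^{\otimes}\to\xF^{\amalg})$ I would let $\mathcal{O}$ be the full suboperad of $U\mathcal{C}^{\otimes}$ on the objects lying over $\mathbf{1}$, so that its space of multimorphisms $(x_{1},\ldots,x_{n})\to y$ is $\Map_{\mathcal{C}}(x_{1}\otimes\cdots\otimes x_{n},y)$. The comparison $\Env(\mathcal{O})\to\mathcal{C}^{\otimes}$ over $\xF^{\amalg}$ is then essentially surjective exactly when (1)(a) holds --- its objects are the finite tensor products of objects over $\mathbf{1}$ --- and, by the morphism description of $\Env(\mathcal{O})$ above, fully faithful exactly when $(\ast)$ holds; this yields (1) (\cf{} Proposition~\ref{propn:iopdimage}). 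For (2), I would pass to underlying symmetric monoidal \igpds{}: $(\xF^{\amalg})^{\simeq}\simeq\coprod_{n\geq0}B\Sigma_{n}\simeq\Sym(\ast)$ is free on a point, so with $X$ the fibre of $\mathcal{C}^{\simeq}\to(\xF^{\amalg})^{\simeq}$ over $\mathbf{1}$ the inclusion $X\hookrightarrow\mathcal{C}^{\simeq}$ is fully faithful (as $\Aut_{\xF}(\mathbf{1})\simeq\ast$), and then (1)(a) says precisely that $\Sym(X)\to\mathcal{C}^{\simeq}$ is essentially surjective, hence --- using $(\ast)$ and that fullness --- an equivalence. So (1)(a)/(1)(b) and (2)(a)/(2)(b) cut out matching subcategories, the passage from $(\mathcal{C}^{\otimes}\to\xF^{\amalg})$ to $(X\to\mathcal{C}^{\simeq})$ and its inverse (reconstructing the functor to $\xF^{\amalg}$ from freeness) giving the equivalence between (1) and (2).

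The translation between comma-category and operad mapping spaces, and the claims about limits of \iopds{}, are routine. The genuine obstacle is hidden in the suboperad $\mathcal{O}$ above: one must show that condition $(\ast)$ is exactly what forces the candidate $\mathcal{O}^{\otimes}$ --- assembled from the spaces $\Map_{\mathcal{C}}(\bigotimes_{j} x_{j},y)$ --- to satisfy the Segal/coherence conditions defining an \iopd{}, and hence that $\Env(\mathcal{O})\isoto\mathcal{C}^{\otimes}$; this is the work of Proposition~\ref{propn:counit-hereditary}. Granting it, (1) and (2) both describe the essential image of the fully faithful envelope, which therefore implements the claimed equivalences.
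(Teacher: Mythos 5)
Your treatment of characterization (1) is essentially the paper's argument: full faithfulness of $\Env' \colon \OpdI \to \SMC_{\infty/\xF^{\amalg}}$ is obtained by checking that the unit $\mathcal{O} \to U'\Env'(\mathcal{O})$ is an equivalence (the paper deduces this from the pullback square of \cref{lem:iOcart}, you from the explicit description of colours and multimorphisms --- same content), and the essential image is cut out by essential surjectivity and full faithfulness of the counit on underlying \icats{}, which is exactly \cref{propn:iopdimage}. One correction to your closing paragraph: the full suboperad $\mathcal{O}$ of $U\mathcal{C}^{\otimes}$ on the objects over $\mathbf{1}$ is an \iopd{} automatically, with no condition needed; what ($\ast$) provides is not the operad axioms for $\mathcal{O}$ but the full faithfulness of the counit $\Env(\mathcal{O}) \to \mathcal{C}^{\otimes}$, i.e.\ that $\mathcal{C}$ has no morphisms beyond those generated under $\otimes$ by multimorphisms of $\mathcal{O}$.

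The genuine gap is in characterization (2). You propose to derive it from (1) by exchanging the datum of a symmetric monoidal functor $\mathcal{C}^{\otimes} \to \xF^{\amalg}$ for the datum of $X \to \mathcal{C}^{\simeq}$ with $\Sym(X) \simeq \mathcal{C}^{\simeq}$, the inverse being ``reconstructing the functor to $\xF^{\amalg}$ from freeness''. That inverse is not available at this level of generality: freeness of the underlying symmetric monoidal \igpd{} says nothing about where the non-invertible morphisms of $\mathcal{C}$ must go, and a symmetric monoidal \icat{} with free underlying \igpd{} need not admit any symmetric monoidal functor to $\xF^{\amalg}$ extending the canonical map on \igpds{} (for instance $\xF^{\op}$ with disjoint union: the cofold map $\mathbf{1} \to \mathbf{1}\otimes\mathbf{1}$ is $\Sigma_{2}$-invariant but no map $\mathbf{1}\to\mathbf{2}$ in $\xF$ is). The functor to $\xF^{\amalg}$ can only be recovered \emph{after} one knows $\mathcal{C}^{\otimes} \simeq \Env(\mathcal{O})$, i.e.\ after the hereditary condition has already been used, so the two characterizations cannot be matched up in advance of proving the theorem for each. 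The paper instead runs the whole argument a second time in the category $\PROPI$ of pairs $(\mathcal{C}^{\otimes}, \Sym(X)\simeq\mathcal{C}^{\simeq})$: it builds an adjunction $\Env'' \dashv U''$, proves $\Env''$ fully faithful (again via \cref{lem:iOcart}), and identifies the image in \cref{propn:counit-hereditary}. This in turn requires knowing that $\Env(\mathcal{O})^{\simeq}$ is genuinely free on $\mathcal{O}_{\angled{1}}^{\simeq}$, which you assert from the object description but the paper establishes by a universal-property argument with the interior operad $\mathcal{O}^{\xint}$ (\cref{propn:Enveqisfree}). That freeness statement, together with full faithfulness of the envelope into the category of pairs, is what your proposal is missing for (2).
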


\begin{remark}
  The condition ($\ast$) 
  appearing in both charaterizations can be traced back to the class
  of PROPs singled out by Boardman and Vogt in
  \cite{BoardmanVogt}*{Lemma 2.43}. More recently, it
   has been studied in different guises in the 1-categorical literature
   under the name of the \emph{hereditary condition}
   (cf.~\cite{Markl:0601129,Borisov-Manin:0609748,Kaufmann-Ward:1312.1269,Batanin-Kock-Weber:1510.08934},
   see also
  \cite{MelliesTabareau}).
  From that perspective, characterization (2) can be seen as the
  $\infty$-categorical version of the 
  equivalence of
  \cite{Batanin-Kock-Weber:1510.08934,Caviglia} between the \emph{Feynman 
    categories} of Kaufmann and Ward~\cite{Kaufmann-Ward:1312.1269}
  and coloured operads.
\end{remark}

\begin{remark}
  We are not aware of any direct precursor to characterization (1),
  but it fits well with Weber's $2$-categorical approach to operad
  theory~\cite{Weber:1412.7599}, where operads are essentially monads
  cartesian over the symmetric monoidal category monad.  Perhaps it
  should also be mentioned that in the theory of operadic categories
  of Batanin and Markl~\cite{Batanin-Markl:1404.3886}, which can be
  seen as a generalization of Barwick's idea of operator
  categories~\cite{BarwickOpCat}, it is an essential feature that
  everything lives over the category of finite sets.
\end{remark}  
  
Using the first characterization, we proceed to give a third:
Viewing \icats{} as complete Segal spaces, we can describe symmetric
monoidal \icats{} over $\xF^{\amalg}$ as functors $\mathcal{F} \to
\mathcal{S}$ satisfying completeness and Segal conditions for a
certain category $\mathcal{F}$, giving an equivalence
\[ \SMC_{\infty/\xF^{\amalg}} \simeq
  \CSeg_{\mathcal{F}}(\mathcal{S}) \subseteq \Fun(\mathcal{F},
  \mathcal{S}). \]
We can then identify \iopds{} as those complete Segal
$\mathcal{F}$-spaces that satisfy some further conditions:
\begin{thm}[\cref{thm:opdascsegF}]
  There is an equivalence
  \[ \OpdI \isoto \CSeg'_{\mathcal{F}}(\mathcal{S}),\]
  where $\CSeg'_{\mathcal{F}}(\mathcal{S})$ is a certain full
  subcategory of $\CSeg_{\mathcal{F}}(\mathcal{S})$.
\end{thm}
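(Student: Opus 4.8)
The plan is to deduce \cref{thm:opdascsegF} by transporting the characterization of $\OpdI$ from \cref{thm:iopdenv}(1) across the equivalence $\SMC_{\infty/\xF^{\amalg}} \simeq \CSeg_{\mathcal{F}}(\mathcal{S})$. By \cref{thm:iopdenv}(1) the symmetric monoidal envelope, regarded as a functor $\OpdI \to \SMC_{\infty/\xF^{\amalg}}$ via the structure functor $\Env(\mathcal{O}) \to \xF^{\amalg}$ (the image under $\Env$ of the structure map of $\mathcal{O}$, using that $\Env$ of the terminal $\infty$-operad $\xF_{*}$ is $\xF^{\amalg}$), is fully faithful with essential image the symmetric monoidal \icats{} $\mathcal{C}^{\otimes} \to \xF^{\amalg}$ satisfying conditions~(a) and~($\ast$). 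Composing with $\SMC_{\infty/\xF^{\amalg}} \simeq \CSeg_{\mathcal{F}}(\mathcal{S})$ therefore identifies $\OpdI$ with a full subcategory of $\CSeg_{\mathcal{F}}(\mathcal{S})$, and the theorem amounts to cutting this subcategory out intrinsically; $\CSeg'_{\mathcal{F}}(\mathcal{S})$ will by definition be the resulting full subcategory.

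To make this explicit I would first record, for a complete Segal $\mathcal{F}$-space $\mathcal{X}\colon\mathcal{F}\to\mathcal{S}$ corresponding to $\mathcal{C}^{\otimes}\to\xF^{\amalg}$, how the ingredients of~(a) and~($\ast$) are visible on $\mathcal{X}$: the space of objects of $\mathcal{C}$, its subspace of objects lying over $\mathbf{1}\in\xF$, the tensor-product functors, and the mapping spaces $\Map_{\mathcal{C}}(-,-)$ together with their maps to the mapping spaces of $\xF$. By the Segal conditions built into $\CSeg_{\mathcal{F}}(\mathcal{S})$, all of this is determined by the values of $\mathcal{X}$ on the generating objects of $\mathcal{F}$ and on the generating morphisms among them, so each of~(a),~($\ast$) becomes a statement about finitely much of this data. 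Carrying the translation out, condition~(a) becomes the essential-surjectivity requirement that every object of $\mathcal{C}$ be a tensor product of objects over $\mathbf{1}$ --- a $\pi_{0}$-surjectivity statement for an explicit map of spaces assembled from the values of $\mathcal{X}$ --- and condition~($\ast$) becomes the assertion that an explicit family of comparison maps, one for each pair $(\mathbf{n},\mathbf{m})$ and built from the Segal maps of $\mathcal{X}$ together with its structure maps to $\xF$, are equivalences; equivalently, that $\mathcal{X}$ inverts an explicit set of morphisms of $\mathcal{F}$, a further Segal-type locality condition. Defining $\CSeg'_{\mathcal{F}}(\mathcal{S})\subseteq\CSeg_{\mathcal{F}}(\mathcal{S})$ to be the full subcategory cut out by these two conditions, it is by construction the essential image of $\OpdI$, which gives the claimed equivalence.

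The hard part will be the verification behind~($\ast$): one must match the indexing set $\Map_{\xF}(\mathbf{n},\mathbf{m})$ and the factors $\Map_{\mathcal{C}}\bigl(\bigotimes_{j\in\phi^{-1}(i)} x_{j},\,y_{i}\bigr)$ of~($\ast$) with the correct diagram of objects and morphisms of $\mathcal{F}$, and check that the comparison map induced by the Segal structure of $\mathcal{X}$ is precisely the ``tensor maps together'' map of~($\ast$). This is a combinatorial computation inside $\mathcal{F}$; everything else --- chaining the two equivalences and passing to full subcategories --- is formal, given \cref{thm:iopdenv} and the equivalence $\SMC_{\infty/\xF^{\amalg}}\simeq\CSeg_{\mathcal{F}}(\mathcal{S})$.
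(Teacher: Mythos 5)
Your overall route is the paper's: you compose the fully faithful functor $\Env'\colon\OpdI\hookrightarrow\SMC_{\infty/\xF^{\amalg}}$ (whose essential image is characterized by \cref{propn:iopdimage}) with the equivalence $\SMC_{\infty/\xF^{\amalg}}\simeq\CSeg_{\mathcal{F}}(\mathcal{S})$ of \cref{cor:smcxFcseg}, and then translate the two image conditions into conditions on the corresponding Segal $\mathcal{F}$-space; the combinatorial matching you defer as ``the hard part'' is exactly what the paper's proof of \cref{thm:opdascsegF} (together with \cref{lem:segprimesimple}) carries out. The one substantive point you miss is that the paper does \emph{not} leave condition (a) as a $\pi_{0}$-surjectivity condition: by \cref{rmk:iopdcondprime}, in the presence of the hereditary condition the essential-surjectivity requirement upgrades to the statement that the tensor maps $(\mathcal{C}^{\simeq}_{(1)})^{\times n}\to\mathcal{C}^{\simeq}_{(n)}$ are \emph{equivalences}, so that both conditions together amount to $\Phi$ inverting a single explicit class of morphisms of $\mathcal{F}$, namely those lying over an identity in $\Dop$ and over $\mathbf{n}\xfrom{=}\mathbf{n}\to\mathbf{n}'$. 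This is how $\CSeg'_{\mathcal{F}}(\mathcal{S})$ is actually defined, and it matters downstream: it is what exhibits $\CSeg'_{\mathcal{F}}(\mathcal{S})$ as an accessible localization of a presheaf \icat{} (hence $\OpdI$ presentable, \cref{cor:pres}), and what allows $\Fun'(\mathcal{F},\mathcal{S})$ to be identified with $\Fun(\DFop,\mathcal{S})$ via the localization $P\colon\mathcal{F}\to\DFop$ in the comparison with Barwick's model. Your version, with the subcategory cut out by a surjectivity condition plus a locality condition, describes the same full subcategory and so does prove the statement as given, but it would not directly support those later applications.
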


Since $\CSeg'_{\mathcal{F}}(\mathcal{S})$ is by definition an
accessible localization of a presheaf \icat{}, this result implies in
particular that $\OpdI$ is a presentable \icat{}, without appealing to
a presentation of $\OpdI$ by a model category.
Our main motivation for this characterization, however, is that it is
a key ingredient in the final result of the paper: via an obvious
comparison functor between $\mathcal{F}$ and Barwick's category
$\DFop$, we obtain with very little work an equivalence between
$\CSeg'_{\mathcal{F}}(\mathcal{S})$ and Barwick's definition of
\iopds{} as presheaves on $\DF$ satisfying Segal and completeness
conditions. Thus we get a simple proof of the equivalence between
Lurie's and Barwick's approaches to $\infty$-operads:
\begin{cor}
  There is an equivalence of \icats{}
  \[ \OpdI \simeq \CSeg_{\DFop}(\mathcal{S})\]
  between Lurie's and Barwick's models for \iopds{}.
\end{cor}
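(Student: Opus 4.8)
The plan is to deduce this directly from \cref{thm:opdascsegF}, which already identifies $\OpdI$ with the full subcategory $\CSeg'_{\mathcal{F}}(\mathcal{S}) \subseteq \Fun(\mathcal{F},\mathcal{S})$; it then suffices to match $\CSeg'_{\mathcal{F}}(\mathcal{S})$ with Barwick's $\CSeg_{\DFop}(\mathcal{S})$. The first step is to write down the obvious comparison functor $\gamma\colon \DFop \to \mathcal{F}$ between Barwick's category $\DF = \simp_{\xF}$ and $\mathcal{F}$: an object of either category encodes a finite ordinal equipped with a compatible system of finite sets, and a morphism a map in $\simp$ together with compatible maps of those finite sets, so there is a tautological functor reading one description off the other (possibly after passing to opposites to match Barwick's conventions). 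I expect $\gamma$ to be an equivalence of $1$-categories; if it is instead only a fully faithful inclusion, I would check that its image is ``Segal-dense'', \ie{} that the inert maps hitting it already generate the Segal conditions on both sides, so that restriction and Kan extension along $\gamma$ still induce an equivalence between the subcategories of Segal presheaves.

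Granting that $\gamma$ identifies the relevant presheaf \icats{} (or the reflective localizations of interest), what remains is to see that restriction $\gamma^{*}$ carries $\CSeg_{\DFop}(\mathcal{S})$ precisely onto $\CSeg'_{\mathcal{F}}(\mathcal{S})$, which I would do by matching the defining conditions. The Segal and completeness conditions in the simplicial direction are formulated on both sides via the respective copies of $\simp$ (spine inclusions for the Segal condition, the Rezk map for completeness), which $\gamma$ preserves, so these agree at once. The point is then to identify Barwick's remaining conditions — the ones that encode the operadic structure in his model — with the extra conditions cutting $\CSeg'_{\mathcal{F}}(\mathcal{S})$ out of $\CSeg_{\mathcal{F}}(\mathcal{S})$, which by \cref{thm:opdascsegF} together with part~(1) of \cref{thm:iopdenv} are precisely the presheaf-level incarnations of conditions~(a) and~($\ast$) for the structure over $\xF^{\amalg}$; here one uses that $\gamma$ matches the objects of $\mathcal{F}$ lying over $\mathbf{1}\in\xF$ with Barwick's corolla objects, so that condition~($\ast$) becomes his hereditary condition.

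The part to be careful about is pinning down the exact relationship between the two indexing categories and then the bookkeeping in that last comparison of conditions; I do not anticipate a deep difficulty — consistent with the claim that little work is needed once the setup is in place — but rather a careful lining-up of two differently-presented combinatorial categories. As a cross-check, or a fallback if the condition-matching is awkward, one can instead build a direct functor $\OpdI \to \CSeg_{\DFop}(\mathcal{S})$ by restricting along $\gamma$, via the first characterization in \cref{thm:iopdenv}, the symmetric monoidal envelope $\Env(\mathcal{O})$ of an \iopd{} $\mathcal{O}$, and verify it is an equivalence by testing fully faithfulness and essential surjectivity on corollas, where both sides compute the spaces of multi-operations of $\mathcal{O}$.
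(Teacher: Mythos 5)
Your overall strategy --- reduce to \cref{thm:opdascsegF} and then compare the indexing categories $\mathcal{F}$ and $\DFop$ --- is the paper's, but the pivotal step is not what you expect, and as written your plan stalls there. The two categories are \emph{not} equivalent, nor does $\DFop$ sit inside $\mathcal{F}$ as a Segal-dense full subcategory in the way your fallback envisions. An object of $\mathcal{F}$ over $(\mathbf{n},[m])$ is a chain $\mathbf{a}_0\to\cdots\to\mathbf{a}_m$ \emph{together with} an augmentation to $\mathbf{n}$, and $\mathcal{F}$ contains morphisms lying over $\mathbf{n}\xfrom{=}\mathbf{n}\to\mathbf{n}'$ that change only the augmentation. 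The correct comparison functor goes from the bigger category to the smaller one: $P\colon\mathcal{F}\to\DFop$ forgets the augmentation, and the key technical result (\cref{propn:Ploc}) is that $P$ is a \emph{localization} --- proved fibrewise over $\Dop$ by exhibiting a section $S_m$ left adjoint to $P_m$, then globalized using Hinich's criterion for localizations of cocartesian fibrations. Consequently $P^*$ identifies $\Fun(\DFop,\mathcal{S})$ with the full subcategory $\Fun'(\mathcal{F},\mathcal{S})$ of functors inverting the augmentation-changing morphisms, and that inversion requirement is \emph{precisely} the extra condition \cref{eq:primecond} cutting $\CSeg'_{\mathcal{F}}(\mathcal{S})$ out of $\CSeg_{\mathcal{F}}(\mathcal{S})$. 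This identification is the idea your proposal is missing, and no version of ``$\gamma$ is an equivalence'' or ``fully faithful with Segal-dense image'' substitutes for it.

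Relatedly, your plan to match ``Barwick's remaining conditions that encode the operadic structure'' against the primed conditions misreads where the content lies: Barwick's model imposes only Segal and completeness conditions (\cref{rmk:DFSeg}), with no separate hereditary condition. Once one knows $\Fun(\DFop,\mathcal{S})\simeq\Fun'(\mathcal{F},\mathcal{S})$, the Segal conditions on the two sides correspond verbatim under $P^*$, and completeness matches via the simplification in \cref{lem:Seg'comp}. The hereditary condition ($\ast$) and the freeness/generation condition enter only in \cref{thm:opdascsegF} (which you are granting), where they are shown to translate into \cref{eq:primecond}; they play no further role in the comparison with $\DFop$. Your fallback of building a direct functor via the envelope and testing on corollas could perhaps be made to work, but it would amount to redoing the localization analysis in disguise.
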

This theorem was already proved by Barwick~\cite{BarwickOpCat} by a
rather different method (which involves studying the nerve adjunction
for a functor $\DF \to \OpdI$). Note that Barwick's result is
substantially more general than ours, giving an equivalence between
two definitions of \iopds{} over any \emph{perfect operator
  category}, where the explicit description of the monoidal envelope
required for our proof typically fails.\footnote{Our approach does
  also work in the particular case of non-symmetric (or planar)
  \iopds{}, however.}

\subsection{Some Basic Notation}
This paper is written in the language of \icats{}, and all terms such
as (co)limits and commutative diagrams should be understood in their
fully homotopy-coherent/\icatl{} sense.
\begin{itemize}
\item $\xF$ is (a skeleton of) the category of finite sets, with
  objects $\mathbf{n} = \{1,\ldots,n\}$ ($n = 0,1,\ldots$).
\item $\mathcal{S}$ is the \icat{} of (small) \igpds{}/spaces/homotopy types.
\item $\CatI$ is the \icat{} of (small) \icats{}.
\item If $\mathcal{C}$ is an \icat{},
  $\Cat_{\infty/\mathcal{C}}^{\txt{L}}$ denotes the full subcategory of
  the overcategory $\Cat_{\infty/\mathcal{C}}$ spanned by the left
  fibrations to $\mathcal{C}$.
\item If $\mathcal{C}$ is an \icat{},
  $\Cat_{\infty/\mathcal{C}}^{\txt{coc}}$ denotes the subcategory of
  the overcategory $\Cat_{\infty/\mathcal{C}}$ with objects the
  cocartesian fibrations to $\mathcal{C}$ and morphisms the functors
  over $\mathcal{C}$ that preserve cocartesian morphisms.
\item If $\mathcal{C}$ is an \icat{}, we write $\mathcal{C}^{\simeq}$
  for its underlying \igpd{}, \ie{} the subcategory containing only equivalences.
\end{itemize}

\section{From Lurie's $\infty$-Operads to Symmetric Monoidal
  $\infty$-Categories}

In this section we first review the basic notions of commutative
monoids in \icats{} (and in particular symmetric monoidal \icats{}) in
\S\ref{subsec:cmon} and \iopds{} (in the sense of \cite{HA}) in
\S\ref{subsec:Luriopd}. Then we recall the symmetric monoidal envelope
of an \iopd{} in \S\ref{subsec:env} before we study its image and
prove \cref{thm:iopdenv} in \S\ref{subsec:envimg}.

\subsection{Commutative Monoids and Symmetric Monoidal $\infty$-Categories}\label{subsec:cmon}
We now recall the \icatl{} notion of commutative
monoid, originally introduced by Segal~\cite{SegalCatCohlgy}. As a
special case, this also gives the definition of symmetric monoidal \icats{}.

\begin{notation}
  We write $\xF_{*}$ for (a skeleton of) the category of finite
  pointed sets. We will make use of two equivalent descriptions of this category:
  \begin{enumerate}[(1)]
  \item The objects of $\xF_{*}$ are the pointed sets $\angled{n} =
    (\{0,1,\ldots,n\}, 0)$ ($n = 0,1,\ldots$) and the morphisms $\angled{n} \to
    \angled{m}$ are the functions that preserve the base point.
  \item The objects of $\xF_{*}$ are the sets $\mathbf{n} =
    \{1,\ldots,n\}$ ($n = 0,1,\ldots$), and morphisms from
    $\mathbf{n}$ to $\mathbf{m}$ are isomorphism 
	classes\footnote{In fact the groupoid of such spans is discrete, so from an \icatl{} viewpoint taking isomorphism classes doesn't do anything.} of \emph{spans}
    \[ \mathbf{n} \hookleftarrow \mathbf{x} \to \mathbf{m} \]
    where the backwards map is injective.  Spans are composed by taking
    pullbacks.
  \end{enumerate}
  To pass between these two descriptions, note that giving a pointed map
  $\angled{n} \to \angled{m}$ is the same thing as giving a map of
  sets $I \to \mathbf{m}$ where $I$ is the subset of $\angled{n}$ that
  is not mapped to the base point. (Up to unique isomorphism, 
  $I$ can be replaced by an object in the chosen skeleton.)
\end{notation}

\begin{defn}\label{defn:F*fact}
  A morphism $\phi \colon \angled{n} \to \angled{m}$ in $\xF_{*}$ is
  \emph{active} if $\phi^{-1}(0) = \{0\}$ and \emph{inert} if
  $\phi|_{\angled{n}\setminus \phi^{-1}(0)}$ is an isomorphism. The inert
  and active morphisms form a factorization system on $\xF_{*}$; in
  particular, every morphism factors uniquely up to isomorphism as an inert morphism
  followed by an active morphism.
\end{defn}
\begin{remark}
  In terms of the second description of $\xF_{*}$, a span
  \[\mathbf{n} \hookleftarrow \mathbf{k} \to \mathbf{m}\]
  is active if the inclusion $\mathbf{n} \hookleftarrow \mathbf{k}$
  is an isomorphism, and inert if the map $\mathbf{k} \to \mathbf{m}$
  is an isomorphism.
\end{remark}

\begin{notation}
  For $\angled{n} \in \xF_{*}$ and $i = 1,\ldots,n$, we write
  $\rho_{i} \colon \angled{n} \to \angled{1}$ for the inert map given
  by
  \[ \rho_{i}(j) =
    \begin{cases}
      0, & i \neq j, \\
      1, & i = j.
    \end{cases}
  \]
  Alternatively, this is the span
  \[ \mathbf{n} \hookleftarrow \{i\} \xto{=}  \mathbf{1}.\]
\end{notation}

\begin{defn}\label{defn:cmon}
  Let $\mathcal{C}$ be an \icat{} with finite products. A
  \emph{commutative monoid} in $\mathcal{C}$ is a functor $M \colon
  \xF_{*} \to \mathcal{C}$ such that for every $\angled{n} \in
  \xF_{*}$, the natural morphism
  \[ M(\angled{n}) \to \prod_{i=1}^{n} M(\angled{1}), \]
  determined by the maps $\rho_{i}$, is an equivalence. We write
  $\CMon(\mathcal{C})$ for the full subcategory of $\Fun(\xF_{*},
  \mathcal{C})$ spanned by the commutative monoids.
\end{defn}

\begin{defn}
  A \emph{symmetric monoidal \icat{}} is a commutative monoid in the
  \icat{} $\CatI$ of \icats{}.
\end{defn}

\begin{remark}
  Equivalently, using the straightening equivalence between functors
  to $\CatI$ and cocartesian fibrations, we can view a symmetric
  monoidal \icat{} as a cocartesian fibration over $\xF_{*}$.
\end{remark}

\begin{notation}
  We write $\SMCI$ for the \icat{} of symmetric monoidal \icats{}. This
  can be viewed as a full subcategory of either $\Fun(\xF_{*}, \CatI)$
  or $\Cat_{\infty/\xF_{*}}^{\txt{coc}}$.
\end{notation}

\subsection{Lurie's $\infty$-Operads}\label{subsec:Luriopd}
Here we recall Lurie's definition of \iopds{} from \cite{HA}*{\S
  2.1.1} and its relation to symmetric monoidal \icats{}.
\begin{defn}
  An \emph{\iopd{}} is a functor $\pi \colon \mathcal{O} \to \xF_{*}$ such
  that:
  \begin{enumerate}[(1)]
  \item $\mathcal{O}$ has $\pi$-cocartesian morphisms over inert
    morphisms in $\xF_{*}$.
  \item For every $\angled{n} \in \xF_{*}$, the functor
    \[ \mathcal{O}_{\angled{n}} \to \prod_{i = 1}
      \mathcal{O}_{\angled{1}}, \]
    given by cocartesian transport along the maps $\rho_{i} \colon
    \angled{n} \to \angled{1}$, is an equivalence.
  \item For $X \in \mathcal{O}_{\angled{n}}$, if $\overline{\rho}_{i}
    \colon X \to X_{i}$ is a cocartesian morphism over $\rho_{i}$ ($i
    = 1,\ldots,n$), for any $Y \in \mathcal{O}_{\angled{m}}$ the commutative square
    \[
      \begin{tikzcd}
        \Map_{\mathcal{O}}(Y, X) \arrow{r}{(\overline{\rho}_{i,*})}
        \arrow{d} & \prod_{i=1}^{n} \Map_{\mathcal{O}}(Y,X_{i})
        \arrow{d} \\
        \Map_{\xF_{*}}(\angled{m},\angled{n}) \arrow{r}{(\rho_{i,*})}
        & \prod_{i=1}^{n} \Map(\angled{m}, \angled{1})
      \end{tikzcd}
    \]
    is a pullback square.
  \end{enumerate}
\end{defn}

\begin{remark}\label{rmk:smiopd}
  It is not hard to see that a symmetric monoidal \icat{}, viewed as a
  cocartesian fibration over $\xF_{*}$, is precisely an \iopd{} that
  is also a cocartesian fibration.
\end{remark}

 \begin{defn}
  If $\pi \colon \mathcal{O} \to \xF_{*}$ is an \iopd{}, we say a
  morphism in $\mathcal{O}$ is \emph{inert} if it is a cocartesian
  morphism over an inert morphism in $\xF_{*}$, and \emph{active} if
  it lies over an active morphism in $\xF_{*}$. By
  \cite{HA}*{Proposition 2.1.2.5}, the inert and active morphisms form
  a factorization system on $\mathcal{O}$.
\end{defn}

\begin{defn}
  If $p \colon \mathcal{O} \to \xF_{*}$ and $q \colon \mathcal{P} \to
  \xF_{*}$ are \iopds{}, then a \emph{morphism of \iopds{}} from
  $\mathcal{O}$ to $\mathcal{P}$ is a commutative triangle
  \[
    \begin{tikzcd}
      \mathcal{O} \arrow{rr}{f} \arrow{dr}[swap]{p} & & \mathcal{P}
      \arrow{dl}{q} \\
       & \xF_{*}
    \end{tikzcd}
  \]
  such that $f$ preserves inert morphisms. We write $\OpdI$ for the
  subcategory of $\Cat_{\infty/\xF_{*}}$ whose objects are the
  \iopds{} and whose morphisms are the morphisms of \iopds{}.
\end{defn}

\begin{remark}
  By \cref{rmk:smiopd}, if we view symmetric monoidal \icats{} as
  cocartesian fibrations then the subcategory of
  $\Cat_{\infty/\xF_{*}}$ corresponding to $\SMCI$ is contained in
  $\OpdI$, so that we have a forgetful functor
  $U \colon \SMCI \to \OpdI$. Note that this is not fully faithful: a
  morphism in $\SMCI$ is required to preserve all cocartesian
  morphisms and corresponds to a symmetric monoidal functor, while a
  morphism in $\OpdI$ is only required to preserve the cocartesian
  morphisms that lie over inert maps in $\xF_{*}$. Such a morphism can be
  interpreted as a lax symmetric monoidal functor.
\end{remark}

\begin{defn}
  Suppose $\mathcal{O}$ is an \iopd{}. Given a full subcategory
  $\mathcal{C}$ of $\mathcal{O}_{\angled{1}}$, the full subcategory of
  $\mathcal{O}$ spanned by the objects that lie in
  $\mathcal{C}^{\times n} \subseteq \mathcal{O}_{\angled{1}}^{\times
    n}$ under the equivalence $\mathcal{O}_{\angled{1}}^{\times
    n} \simeq \mathcal{O}_{\angled{n}}$, for all $n$, is again an
  \iopd{}. We refer to this as the \emph{full suboperad} of
  $\mathcal{O}$ spanned by the objects in $\mathcal{C}$.
\end{defn}

\subsection{Symmetric Monoidal Envelopes}\label{subsec:env}
In this subsection we recall the construction of symmetric monoidal
envelopes from \cite{HA}*{\S 2.2.4}.

\begin{notation}
  Let $\Act(\xF_{*})$ denote the full subcategory of the arrow
  category $\xF_{*}^{[1]}$ whose objects are the active morphisms. We
  write $s,t \colon \Act(\xF_{*}) \to \xF_{*}$ for the source and
  target projections. If $i \colon \xF_{*} \to \Act(\xF_{*})$ denotes
  the functor that assigns to each object its identity map, then
  $si = ti = \id_{\xF_{*}}$.
\end{notation}

\begin{defn}
  For $\mathcal{O}$ an \iopd{}, we write $\Env(\mathcal{O}) \to
  \xF_{*}$ for the fibre product $\mathcal{O} \times_{\xF_{*}}
  \Act(\xF_{*})$ along $s$, with the map to $\xF_{*}$ induced by
  $t$. This gives a functor $\Env \colon \OpdI \to \Cat_{\infty/\xF_{*}}$.
\end{defn}

\begin{lemma}\label{lem:iOcart}
  There is a natural pullback square
    \begin{equation}
      \label{eq:iOpb}
    \begin{tikzcd}
      \mathcal{O} \arrow{d} \arrow{r}{i_{\mathcal{O}}} & \Env(\mathcal{O}) \arrow{d} \\
      \xF_{*} \arrow{r}{i} & \Act(\xF_{*}).
    \end{tikzcd}
    \end{equation} 
\end{lemma}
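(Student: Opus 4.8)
The plan is to derive the square from the pasting lemma for pullback squares, using only the definition $\Env(\mathcal{O}) = \mathcal{O} \times_{\xF_{*}} \Act(\xF_{*})$ (the fibre product along $\pi$ and $s$) together with the identity $si = \id_{\xF_{*}}$ recorded in the notation above. Write $p_{1} \colon \Env(\mathcal{O}) \to \mathcal{O}$ and $p_{2} \colon \Env(\mathcal{O}) \to \Act(\xF_{*})$ for the two projections. By construction $i_{\mathcal{O}}$ is the map into this fibre product determined by $\id_{\mathcal{O}}$ and $i\pi$ (these agree over $\xF_{*}$ because $si\pi \simeq \pi$), so $p_{1}i_{\mathcal{O}} \simeq \id_{\mathcal{O}}$ and $p_{2}i_{\mathcal{O}} \simeq i\pi$; in particular the unlabelled vertical maps in the statement are $\pi$ on the left and $p_{2}$ on the right.

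First I would assemble the commutative rectangle
\[
\begin{tikzcd}
\mathcal{O} \arrow{r}{i_{\mathcal{O}}} \arrow{d}{\pi} & \Env(\mathcal{O}) \arrow{r}{p_{1}} \arrow{d}{p_{2}} & \mathcal{O} \arrow{d}{\pi} \\
\xF_{*} \arrow{r}{i} & \Act(\xF_{*}) \arrow{r}{s} & \xF_{*}.
\end{tikzcd}
\]
Its right-hand square is cartesian by the very definition of $\Env(\mathcal{O})$. The outer rectangle has horizontal composites $p_{1}i_{\mathcal{O}} \simeq \id_{\mathcal{O}}$ and $si \simeq \id_{\xF_{*}}$, so it is a square two of whose edges are identities and is therefore trivially cartesian. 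The pasting lemma for pullback squares — when the right square is cartesian, the left square is cartesian \IFF{} the outer rectangle is — then shows that the left square, which is precisely the square of the Lemma, is cartesian.

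For naturality in $\mathcal{O} \in \OpdI$ it is enough to note that every ingredient above ($\Env$, the projections, $i_{\mathcal{O}}$, and the cartesian-square identifications) is functorial in $\mathcal{O}$; equivalently, one can phrase the conclusion as the natural equivalence $\Env(\mathcal{O}) \times_{\Act(\xF_{*})} \xF_{*} \simeq \mathcal{O} \times_{\xF_{*}} \bigl(\Act(\xF_{*}) \times_{\Act(\xF_{*})} \xF_{*}\bigr) \simeq \mathcal{O}$, using associativity of fibre products and base change along the identity of $\xF_{*}$.

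I do not expect a genuine obstacle here: morally the statement is just ``base change along $\id_{\xF_{*}}$ is an equivalence'', and the one real point is the bookkeeping of which of the structure maps $s$, $t$, $i$ of $\Act(\xF_{*})$ sits on each edge of the rectangle — in particular that the right-hand vertical of the Lemma is the projection $p_{2}$, not the envelope's structure map $tp_{2}$ to $\xF_{*}$ — together with the observation $si = \id_{\xF_{*}}$. If one prefers to sidestep the pasting lemma, the same conclusion drops out of unwinding universal properties: an object of $\Env(\mathcal{O}) \times_{\Act(\xF_{*})} \xF_{*}$ amounts to an object $X \in \mathcal{O}$, an active arrow of $\xF_{*}$ with source $\pi(X)$, and an identification of that arrow with an identity arrow, which is the same datum as $X$ alone; but the pasting argument records this coherently with no extra labour.
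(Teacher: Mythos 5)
Your argument is correct and is essentially the paper's own proof: both derive the square by pasting the defining pullback $\Env(\mathcal{O}) = \mathcal{O} \times_{\xF_{*}} \Act(\xF_{*})$ (along $s$) with the identity $si = \id_{\xF_{*}}$, so that pulling back along $i$ recovers $\mathcal{O}$. Your version just spells out the bookkeeping (which projection sits on which edge) more explicitly than the paper does.
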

\begin{proof}
  By definition we have a commutative diagram
  \[
    \begin{tikzcd}
      {} & \Env(\mathcal{O}) \arrow{r} \arrow{d} & \mathcal{O}
      \arrow{d} \\
      \xF_{*} \arrow[bend right]{rr}{\id} \arrow{r}{i} & \Act(\xF_{*})
      \arrow{r}{s} & \xF_{*}
    \end{tikzcd}
  \]
  where the square is a pullback square. The pullback along $i$ is therefore
  indeed given by $\mathcal{O} \to \xF_{*}$.
\end{proof}

\begin{remark}
  Since $ti = \id$, we can view $i_{\mathcal{O}}$ as a natural map
  $\mathcal{O} \to \Env(\mathcal{O})$ over $\xF_{*}$.
\end{remark}

\begin{thm}[\cite{HA}*{Propositions 2.2.4.4 and 2.2.4.9}]
  The construction $\Env$ gives a functor $\OpdI \to \SMCI$, which is
  left adjoint to the forgetful functor $U \colon \SMCI \to \OpdI$,
  with unit transformation given by the natural maps $i_{(\blank)}$. \qed
\end{thm}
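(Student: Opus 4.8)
The plan is to verify three things in turn: that $\Env(\mathcal{O})$ is a symmetric monoidal \icat{}, that $i_{\mathcal{O}}$ is a morphism of \iopds{}, and that $i_{\mathcal{O}}$ has the universal property of a unit --- the last being where the real work lies. For the first, I would exhibit explicit cocartesian lifts for the projection $\Env(\mathcal{O}) \to \xF_{*}$ induced by $t$. Writing $\pi$ for the structure map of $\mathcal{O}$, an object of $\Env(\mathcal{O})$ is a pair $(X,\alpha)$ with $X \in \mathcal{O}$, $\pi(X) = \angled{a}$ and $\alpha \colon \angled{a} \to \angled{b}$ active, lying over $\angled{b}$; given $\beta \colon \angled{b} \to \angled{c}$ in $\xF_{*}$, one factors $\beta\alpha$ as an inert map $\iota$ followed by an active map $\alpha'$ (using \cref{defn:F*fact}), picks a $\pi$-cocartesian lift $X \to X'$ of $\iota$, and checks that the resulting morphism $(X,\alpha) \to (X',\alpha')$ over $\beta$ is $t$-cocartesian; this uses only that $\pi$ admits cocartesian lifts over inert maps together with uniqueness of inert--active factorizations. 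For the Segal condition I would observe that an active map $\alpha \colon \angled{a} \to \angled{n}$ partitions $\{1,\dots,a\}$ into its fibres over $1,\dots,n$, so that cocartesian pushforward along the $\rho_{i}$, combined with the Segal condition for $\mathcal{O}$, exhibits $\Env(\mathcal{O})_{\angled{n}} \simeq \prod_{i=1}^{n}\Env(\mathcal{O})_{\angled{1}}$. (Alternatively, all of this can be read off from $\Env(\mathcal{O}) = \mathcal{O}\times_{\xF_{*}}\Act(\xF_{*})$ once one observes that $t \colon \Act(\xF_{*}) \to \xF_{*}$ is itself the symmetric monoidal \icat{} $\xF^{\amalg}$.)

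That $i_{\mathcal{O}}$ is a morphism of \iopds{} is then immediate: it lies over $\xF_{*}$ since $ti = \id_{\xF_{*}}$, and, specializing the cocartesian-lift description to $\alpha = \id$, it carries $\pi$-cocartesian morphisms over inert maps to $t$-cocartesian morphisms of $\Env(\mathcal{O})$. Naturality of $i_{(\blank)}$ and functoriality of $\Env \colon \OpdI \to \SMCI$ then follow formally from the functoriality of the defining fibre product. It remains to establish the universal property: for every symmetric monoidal \icat{} $\mathcal{C}^{\otimes}$, restriction along $i_{\mathcal{O}}$ should induce an equivalence from the full subcategory of $\Fun_{/\xF_{*}}(\Env(\mathcal{O}), \mathcal{C}^{\otimes})$ on the symmetric monoidal functors to the full subcategory of $\Fun_{/\xF_{*}}(\mathcal{O}, U\mathcal{C}^{\otimes})$ on the morphisms of \iopds{}; passing to maximal subgroupoids then gives the required equivalence of mapping spaces in the asserted adjunction.

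The crucial structural input is that $i_{\mathcal{O}}$ exhibits $\Env(\mathcal{O})$ as obtained from $\mathcal{O}$ by adjoining cocartesian lifts along active maps in a universal way: every object $(X,\alpha)$ receives a $t$-cocartesian morphism $i_{\mathcal{O}}(X) = (X,\id) \to (X,\alpha)$ lying over $\alpha$, and for an arbitrary morphism $(X,\alpha) \to (X',\alpha')$ of $\Env(\mathcal{O})$ these two cocartesian morphisms fit into a commutative square whose remaining side is $i_{\mathcal{O}}$ applied to a morphism of $\mathcal{O}$ lying over the inert part of the base map --- and the given morphism is $t$-cocartesian exactly when that $\mathcal{O}$-morphism is $\pi$-cocartesian, hence inert. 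From this square one reads off both that a functor $F$ over $\xF_{*}$ out of $\Env(\mathcal{O})$ preserves all cocartesian morphisms if and only if $F \circ i_{\mathcal{O}}$ preserves inert ones, and that $F$ is then recovered from $g := F \circ i_{\mathcal{O}}$ by $F(X,\alpha) \simeq \alpha_{!}\,g(X)$, with $\alpha_{!}$ the cocartesian pushforward in $\mathcal{C}^{\otimes}$. So the candidate inverse sends $g$ to $\widetilde{g}$ with $\widetilde{g}(X,\alpha) = \alpha_{!}\,g(X)$, and once the equivalence is in place the unit of the adjunction is $i_{(\blank)}$ by construction.

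The hard part will be upgrading this last analysis from a bijection on equivalence classes of functors to an equivalence of \icats{}, \ie{} checking that $g \mapsto \widetilde{g}$ is genuinely functorial and two-sided inverse to $i_{\mathcal{O}}^{*}$, coherently. I would do this by realizing $\widetilde{g}$ as a left Kan extension of $g$ along $i_{\mathcal{O}}$ relative to the structure map $\mathcal{C}^{\otimes} \to \xF_{*}$: the pointwise colimit formula for such a relative Kan extension at $(X,\alpha)$ collapses to $\alpha_{!}\,g(X)$ precisely because of the cocartesian morphism $i_{\mathcal{O}}(X) \to (X,\alpha)$, which simultaneously shows that $\widetilde{g}$ exists and coheres and that the resulting adjunction (with left adjoint $g \mapsto \widetilde{g}$ and right adjoint $i_{\mathcal{O}}^{*}$) restricts to an equivalence on the two subcategories of functors above. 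This last step is essentially the content of \cite{HA}*{Propositions 2.2.4.4 and 2.2.4.9}, whose argument one could alternatively simply cite.
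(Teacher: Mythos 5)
The paper does not prove this statement at all: it is imported verbatim from Lurie (\cite{HA}*{Propositions 2.2.4.4 and 2.2.4.9}) with a \qed{}, and the only ingredients the paper itself supplies are the pullback square of \cref{lem:iOcart} and the unpacking of the construction in \cref{rmk:Envdesc}. So there is no internal proof to compare against; what you have written is, in effect, a sketch of Lurie's own argument, and as such it is essentially accurate. Your description of the cocartesian lifts of $\Env(\mathcal{O}) \to \xF_{*}$ via inert--active factorization agrees with \cref{rmk:Envdesc}, your identification of $t \colon \Act(\xF_{*}) \to \xF_{*}$ with $\xF^{\amalg}$ is \cref{rmk:EnvF*}, and realizing the inverse to $i_{\mathcal{O}}^{*}$ as a relative left Kan extension whose pointwise formula collapses to $\alpha_{!}\,g(X)$ is exactly the mechanism behind \cite{HA}*{Proposition 2.2.4.9}.

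One assertion is overstated and you should repair it before relying on it: the claim that a functor $F$ over $\xF_{*}$ out of $\Env(\mathcal{O})$ preserves \emph{all} cocartesian morphisms if and only if $F \circ i_{\mathcal{O}}$ preserves inert ones. The ``only if'' direction is fine, but the ``if'' direction fails for an arbitrary $F$: the canonical cocartesian morphisms $(X,\id) \to (X,\alpha)$ are not in the image of $i_{\mathcal{O}}$, and nothing about $F \circ i_{\mathcal{O}}$ constrains what $F$ does to them. (For instance, a lax but not strong symmetric monoidal functor out of $\xF^{\amalg} = \Env(\xF_{*})$ restricts along $i$ to an inert-preserving functor without being symmetric monoidal.) The correct statement, and the one your Kan-extension argument actually delivers, is that the functor $\widetilde{g}$ \emph{produced by} relative left Kan extension from an inert-preserving $g$ preserves all cocartesian morphisms --- which one verifies directly from the formula $\widetilde{g}(X,\alpha) \simeq \alpha_{!}\,g(X)$ and the factorization of cocartesian morphisms through the $(X,\id) \to (X,\alpha)$. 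With that local fix the sketch is a faithful outline of the cited proof.
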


\begin{remark}\label{rmk:Envdesc}
  If $\pi \colon \mathcal{O} \to \xF_{*}$ is an \iopd{}, an object of
  $\Env(\mathcal{O})$ over $\angled{n}$ is given by an object
  $X \in \mathcal{O}$ together with an active morphism $\alpha \colon \pi(X) \to
  \angled{n}$ in $\xF_{*}$. A morphism $(X,\alpha) \to (Y,\beta)$
  in $\Env(\mathcal{O})$ is given by a morphism $\phi \colon X \to Y$ in $\mathcal{O}$ and a
  commutative square
  \[
    \begin{tikzcd}
      \pi(X) \arrow{r}{\pi(\phi)} \arrow{d}{\alpha} & \pi(Y)
      \arrow{d}{\beta} \\
      \angled{n} \arrow{r}{\psi} & \angled{m}.
    \end{tikzcd}
  \]
  If the underlying map $\psi$ is active, then the uniqueness of
  factorizations forces $\phi$ to be an active map in
  $\mathcal{O}$. In particular, since every object of
  $\xF_{*}$ has a unique active map to $\angled{1}$, the underlying \icat{}
  $\Env(\mathcal{O})_{\angled{1}}$ can be identified with the
  subcategory $\mathcal{O}^{\act}$ containing only the active maps in
  $\mathcal{O}$. Given a morphism $\psi \colon \angled{n} \to
  \angled{m}$, the cocartesian morphism $(X,\alpha) \to \psi_{!}(X,\alpha)$ can be
  described as follows:
  The inert-active factorization of $\psi \circ \alpha$ gives a
  commutative square
  \begin{equation}
    \label{eq:iafactsq}
        \begin{tikzcd}
      \pi(X) \arrow{d}{\alpha} \arrow{r}{i} & \angled{k}
      \arrow{d}{a} \\
      \angled{n} \arrow{r}{\psi} & \angled{m}
    \end{tikzcd}
  \end{equation}
  where $i$ is inert and $a$ is active. Since $\mathcal{O}$ is an
  \iopd{} there is a cocartesian morphism $X \to i_{!}X$ in
  $\mathcal{O}$, and $\psi_{!}(X,\alpha)$ is given by $(i_{!}X, a)$
  with the cocartesian morphism in $\mathcal{O}$ together with the
  commutative square \cref{eq:iafactsq}. In particular, if we think of
  objects of
  $\Env(\mathcal{O})_{\angled{1}} \simeq \mathcal{O}^{\act}$ as
  sequences of objects in $\mathcal{O}_{\angled{1}}$, then their
  tensor product is given by concatenation. Given a morphism of
  \iopds{} $F \colon \mathcal{O} \to \mathcal{C}^{\otimes}$, where
  $\mathcal{C}^{\otimes}$ is a symmetric monoidal \icat{}, the
  canonical extension of $F$ to a symmetric monoidal functor
  $\Env(\mathcal{O}) \to \mathcal{C}^{\otimes}$ takes $(X,\alpha)$ to
  the codomain $\alpha_{!}F(X)$ of the cocartesian morphism from
  $F(X)$ over $\alpha$.
\end{remark}

\begin{remark}\label{rmk:EnvF*}
  For the terminal \iopd{} $\xF_{*}$ we can describe $\Env(\xF_{*})$
  even more explicitly: The underlying category
  $\Env(\xF_{*})_{\angled{1}} \simeq \xF_{*}^{\act}$ we can identify
  with the category $\xF$ of finite sets, and under this
  identification the ``concatenation'' symmetric monoidal structure
  corresponds to disjoint union. In other words, the symmetric
  monoidal \icat{} $\Env(\xF_{*})$ is
  equivalent to the coproduct symmetric monoidal structure on $\xF$,
  that is to say
  \[\Env(\xF_{*}) \simeq \xF^{\amalg}.\]
\end{remark}

\subsection{Two Descriptions of $\infty$-Operads via
  Envelopes}\label{subsec:envimg}

In this section we will use the symmetric monoidal envelope functor to
give two descriptions of \iopds{} in terms of symmetric monoidal
\icats{} and thus prove \cref{thm:iopdenv}.

For the first description we want to consider
symmetric monoidal \icats{} equipped with a map to $\xF^{\amalg}$. Since
we saw in \cref{rmk:EnvF*} that $\Env$ takes the terminal \iopd{}
$\xF_{*}$ to $\xF^{\amalg}$, we have a functor
\[ \Env' \colon \OpdI \to \SMC_{\infty/\xF^{\amalg}} \] that just
applies $\Env$ to the unique map to the terminal object in $\OpdI$.

\begin{lemma}
  The functor $\Env'$ has a right adjoint
  \[ U' \colon \SMC_{\infty/\xF^{\amalg}} \to \OpdI,\] given by
  applying the forgetful functor $U$ and then pulling back along the
  unit map $\xF_{*} \to \xF^{\amalg}$.
\end{lemma}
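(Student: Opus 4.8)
The plan is to verify the adjunction $\Env'\dashv U'$ directly on mapping spaces, bootstrapping from Lurie's adjunction $\Env\dashv U$. Abstractly this is an instance of the general fact that a left adjoint $F$ restricts to a left adjoint $\mathcal{A}_{/a}\to\mathcal{B}_{/Fa}$ between slice categories, with right adjoint given by applying the original right adjoint and pulling back along the unit; here $a=\xF_{*}$ is terminal, so $\mathcal{A}_{/a}\simeq\OpdI$ and $Fa=\Env(\xF_{*})\simeq\xF^{\amalg}$.

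First I would unwind the source side. Writing $\pi\colon\mathcal{O}\to\xF_{*}$ for the structure map of an \iopd{} $\mathcal{O}$, the object $\Env'(\mathcal{O})$ is $\Env(\mathcal{O})$ equipped with the symmetric monoidal functor $\Env(\pi)\colon\Env(\mathcal{O})\to\Env(\xF_{*})\simeq\xF^{\amalg}$. Since $\SMC_{\infty/\xF^{\amalg}}$ is the slice of $\SMCI$ over $\xF^{\amalg}$, the space $\Map_{\SMC_{\infty/\xF^{\amalg}}}(\Env'(\mathcal{O}),\mathcal{C}^{\otimes})$ is the fibre over $\Env(\pi)$ of
\[
\Map_{\SMCI}(\Env(\mathcal{O}),\mathcal{C}^{\otimes})\longrightarrow\Map_{\SMCI}(\Env(\mathcal{O}),\xF^{\amalg}),
\]
the map induced by postcomposition with the structure morphism $\mathcal{C}^{\otimes}\to\xF^{\amalg}$. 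Second, I would feed in $\Env\dashv U$: this identifies the displayed map with $\Map_{\OpdI}(\mathcal{O},U\mathcal{C}^{\otimes})\to\Map_{\OpdI}(\mathcal{O},U\xF^{\amalg})$, and by naturality of the unit $i_{(\blank)}$ the chosen point $\Env(\pi)$ goes over to the composite of $\pi$ with the unit map $\xF_{*}\to U\xF^{\amalg}$. Third, I would recognise this fibre as $\Map_{\OpdI}(\mathcal{O},U'(\mathcal{C}^{\otimes}))$ for $U'(\mathcal{C}^{\otimes})=U\mathcal{C}^{\otimes}\times_{U\xF^{\amalg}}\xF_{*}$, the pullback along the unit: a morphism of \iopds{} $\mathcal{O}\to U'(\mathcal{C}^{\otimes})$ is the same as a morphism of \iopds{} $\mathcal{O}\to U\mathcal{C}^{\otimes}$ together with a witness that its composite to $U\xF^{\amalg}$ agrees with $\pi$ followed by the unit — exactly a point of the above fibre. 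All these equivalences are natural in $\mathcal{O}$ and in $\mathcal{C}^{\otimes}$, which yields the adjunction and simultaneously the stated formula for $U'$.

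The step I expect to be the main obstacle is making the third step honest: one must check that $U'(\mathcal{C}^{\otimes})$, formed as a pullback of the underlying \icats{} over $\xF_{*}$, really is an \iopd{}, and that this pullback also computes the pullback in $\OpdI$ (so that maps of \iopds{} into it decompose as claimed). For this I would use that both $U\mathcal{C}^{\otimes}\to U\xF^{\amalg}$ (the image under $U$ of a symmetric monoidal functor) and the unit $\xF_{*}\to U\xF^{\amalg}$ preserve inert cocartesian morphisms: a cocartesian lift of an inert morphism of $\xF_{*}$ in $U'(\mathcal{C}^{\otimes})$ is then obtained from the corresponding cocartesian lift in $\mathcal{C}^{\otimes}$, and the Segal and mapping-space conditions for $U'(\mathcal{C}^{\otimes})$ are inherited from those for $\mathcal{C}^{\otimes}$; and since the inert morphisms of $U'(\mathcal{C}^{\otimes})$ are detected on $\mathcal{C}^{\otimes}$, a morphism of \iopds{} into $U'(\mathcal{C}^{\otimes})$ is precisely a morphism of \iopds{} into $U\mathcal{C}^{\otimes}$ whose composite to $U\xF^{\amalg}$ is identified with $\pi$ followed by the unit. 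Alternatively one can simply invoke the stability of \iopds{} under such pullbacks from \cite{HA}*{\S 2.1}. The remaining bookkeeping — keeping track of which basepoint the fibre is taken over, via naturality of the unit — is routine.
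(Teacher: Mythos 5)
Your argument is correct and is essentially the paper's proof: the paper disposes of the lemma by citing \cite{HTT}*{Proposition 5.2.5.1}, which is precisely the general fact you open with (an adjunction $F \dashv G$ induces an adjunction on slices over $a$ and $Fa$, with right adjoint ``apply $G$, then pull back along the unit''), and your mapping-space computation is just the direct verification of that statement in this instance. Your extra care about the pullback is also well taken and resolves cleanly: since the unit $\xF_{*} \to \xF^{\amalg}$ is fully faithful, $U\mathcal{C}^{\otimes} \times_{U\xF^{\amalg}} \xF_{*}$ is the full suboperad of $\mathcal{C}^{\otimes}$ on the objects lying over $\mathbf{1} \in \xF$, which is an \iopd{} and computes the pullback in $\OpdI$ exactly as you need.
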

\begin{proof}
  This is a special case of \cite{HTT}*{Proposition 5.2.5.1}.
\end{proof}

\begin{remark}
In other words, if $\mathcal{C}^{\otimes}$ is a symmetric monoidal
\icat{} over $\xF_{*}$ then we have a pullback square
\[
  \begin{tikzcd}
    U'(\mathcal{C}^{\otimes}) \arrow{r} \arrow{d} &
    \mathcal{C}^{\otimes} \arrow{d} \\
    \xF_{*} \arrow{r}{i} & \xF^{\amalg}.
  \end{tikzcd}
\]  
\end{remark}

\begin{propn}\label{EnvoverFff}
  $\Env' \colon \OpdI \to \SMC_{\infty/\xF^{\amalg}}$ is fully faithful.
\end{propn}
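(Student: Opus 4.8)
The plan is to deduce full faithfulness of $\Env'$ from the adjunction $\Env' \dashv U'$ by showing that the unit transformation $\id_{\OpdI} \to U'\Env'$ is an equivalence. Since $\Env' \dashv U'$ is an adjunction, full faithfulness of $\Env'$ is equivalent to the unit being an equivalence, so this is the only thing to check. By the explicit description of $U'$ in the lemma above, for an \iopd{} $\mathcal{O}$ the object $U'\Env'(\mathcal{O})$ is obtained by applying the ordinary forgetful functor $U$ to $\Env(\mathcal{O})$ and then pulling back along the unit map $i \colon \xF_{*} \to \xF^{\amalg} \simeq \Env(\xF_{*})$. But $i$ here is exactly the unit map $i_{\xF_{*}}$ of the original Lurie adjunction $\Env \dashv U$ at the terminal \iopd{} $\xF_{*}$, and the map $\Env(\mathcal{O}) \to \Env(\xF_{*}) \simeq \xF^{\amalg}$ is $\Env$ applied to the terminal map $\mathcal{O} \to \xF_{*}$.

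So the key step is to identify the pullback $\Env(\mathcal{O}) \times_{\Env(\xF_{*})} \xF_{*}$ with $\mathcal{O}$ itself. This is precisely \cref{lem:iOcart}: that lemma gives a natural pullback square with $\mathcal{O}$ in the top-left corner, $\Env(\mathcal{O})$ in the top-right, $\xF_{*}$ in the bottom-left (via $i = i_{\xF_{*}}$), and $\Act(\xF_{*})$ in the bottom-right. I would note that the bottom-right corner $\Act(\xF_{*})$, together with the maps $i$ and the target projection, is exactly the symmetric monoidal \icat{} $\xF^{\amalg} \simeq \Env(\xF_{*})$ as in \cref{rmk:EnvF*}, and that the right-hand vertical map $\Env(\mathcal{O}) \to \Act(\xF_{*})$ is the one induced by functoriality of $\Env$ applied to $\mathcal{O} \to \xF_{*}$. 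Hence the pullback square of \cref{lem:iOcart} is exactly the pullback square defining $U'\Env'(\mathcal{O})$, so $U'\Env'(\mathcal{O}) \simeq \mathcal{O}$, and one checks this identification is compatible with the unit map $i_{\mathcal{O}}\colon \mathcal{O} \to \Env(\mathcal{O})$ from \cref{lem:iOcart}, i.e.\ the unit of $\Env' \dashv U'$ corresponds to the unit $i_{(\blank)}$ of $\Env \dashv U$ under this identification. Since that unit is an equivalence onto the pullback, we are done.

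The main (minor) obstacle is bookkeeping: I need to be careful that the adjunction unit of $\Env' \dashv U'$, which by \cite{HTT}*{Proposition 5.2.5.1} is built from the unit of $\Env \dashv U$ by factoring through the pullback, genuinely matches the map produced by \cref{lem:iOcart}, rather than merely some abstract equivalence. Concretely, the unit of $\Env \dashv U$ at $\mathcal{O}$ is $i_{\mathcal{O}}$, which factors through $U'\Env'(\mathcal{O}) = \Env(\mathcal{O}) \times_{\xF^{\amalg}} \xF_{*}$ because $i_{\mathcal{O}}$ already lies over $i \colon \xF_{*}\to\xF^{\amalg}$ (this is the content of the commutative diagram in the proof of \cref{lem:iOcart}); and \cref{lem:iOcart} says precisely that the resulting map $\mathcal{O} \to \Env(\mathcal{O}) \times_{\xF^{\amalg}} \xF_{*}$ is an equivalence. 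There is no serious analytic difficulty here — all the real work was already done in establishing the envelope adjunction and \cref{lem:iOcart}; the proposition is essentially a formal consequence, and I expect the written proof to be only a few lines invoking these facts.
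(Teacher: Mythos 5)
Your proposal is correct and follows exactly the paper's argument: the paper also reduces full faithfulness of $\Env'$ to the unit $\id \to U'\Env'$ being an equivalence and deduces this from the pullback square of \cref{lem:iOcart}. Your additional bookkeeping (identifying $\Act(\xF_{*})$ over $\xF_{*}$ with $\xF^{\amalg} \simeq \Env(\xF_{*})$ and matching the two units) is exactly the unwinding the paper leaves implicit.
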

\begin{proof}
  It suffices to show that the unit transformation $\id \to U'\Env'$
  is an equivalence, which follows from the pullback square
  \cref{eq:iOpb} in \cref{lem:iOcart}.
\end{proof}

\begin{notation}
  For a symmetric monoidal functor $\mathcal{C}^{\otimes} \to
  \xF^{\amalg}$, we write \[\mathcal{C}^{\otimes}_{(1)} :=
  U'\mathcal{C}^{\otimes}\] for the pullback along $\xF_{*} \to
  \xF^{\amalg}$, and $\mathcal{C}_{(1)} :=
  (\mathcal{C}^{\otimes}_{(1)})_{\angled{1}}$ for the fibre of
  $\mathcal{C}$ over $\mathbf{1} \in \xF$. Note that since $\mathbf{1}$
  has no endomorphisms in $\xF$, the inclusion $\mathcal{C}_{(1)} \to
  \mathcal{C}$ exhibits $\mathcal{C}_{(1)}$ as a full subcategory, and
  thus $\mathcal{C}^{\otimes}_{(1)}$ is the full suboperad of
  $\mathcal{C}^{\otimes}$ spanned by objects of $\mathcal{C}$ that lie
  over $\mathbf{1}$.
\end{notation}

\begin{cor}
  $\OpdI$ is equivalent to the full subcategory of
  $\SMC_{\infty/\xF^{\amalg}}$ consisting of symmetric monoidal
  \icats{} $\mathcal{C}^{\otimes}$ over $\xF^{\amalg}$ such that the
  counit map
  \begin{equation}
    \label{eq:envcounit} 
        \Env(\mathcal{C}^{\otimes}_{(1)}) \to \mathcal{C}^{\otimes}
  \end{equation}    
  is an equivalence.\qed
\end{cor}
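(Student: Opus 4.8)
The plan is to deduce this immediately from \cref{EnvoverFff} together with the adjunction $\Env' \dashv U'$. Recall the general fact that if $F \colon \mathcal{A} \to \mathcal{B}$ is a fully faithful functor with right adjoint $G$, then $F$ restricts to an equivalence from $\mathcal{A}$ onto the full subcategory of $\mathcal{B}$ spanned by those objects $b$ at which the counit $\epsilon_{b} \colon FGb \to b$ is an equivalence. Indeed, this full subcategory is exactly the essential image of $F$: on the one hand, for every $a \in \mathcal{A}$ the triangle identity shows $\epsilon_{Fa}$ is inverse to $F(\eta_{a})$, which is an equivalence since the unit $\eta$ of a fully faithful left adjoint is a natural equivalence; on the other hand, if $\epsilon_{b}$ is an equivalence then $b \simeq FGb$ lies in the image of $F$, and naturality of $\epsilon$ together with the first point shows that conversely every object in the image has invertible counit.

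Apply this with $F = \Env' \colon \OpdI \to \SMC_{\infty/\xF^{\amalg}}$, which is fully faithful by \cref{EnvoverFff}, and $G = U'$. It then only remains to identify the counit transformation $\Env' U' \to \id$ concretely. For a symmetric monoidal \icat{} $\mathcal{C}^{\otimes} \to \xF^{\amalg}$ we have, by definition of the notation, $U'\mathcal{C}^{\otimes} = \mathcal{C}^{\otimes}_{(1)}$, the full suboperad of $\mathcal{C}^{\otimes}$ on the objects lying over $\mathbf{1}$; and $\Env'$ is by definition $\Env$ followed by the functor recording the map to $\xF^{\amalg} \simeq \Env(\xF_{*})$, so $\Env'(\mathcal{C}^{\otimes}_{(1)})$ has underlying symmetric monoidal \icat{} $\Env(\mathcal{C}^{\otimes}_{(1)})$. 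Hence the component of the counit at $\mathcal{C}^{\otimes}$ is precisely the map \cref{eq:envcounit}, regarded as a morphism in $\SMC_{\infty/\xF^{\amalg}}$.

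Finally, a morphism in $\SMC_{\infty/\xF^{\amalg}}$ is an equivalence \IFF{} its image in $\SMCI$ is, and a morphism of symmetric monoidal \icats{} is an equivalence \IFF{} the underlying functor of \icats{} is one; so the counit at $\mathcal{C}^{\otimes}$ is an equivalence precisely when \cref{eq:envcounit} is an equivalence of \icats{}. Combining this with the previous two paragraphs yields the stated description of $\OpdI$ as the full subcategory of $\SMC_{\infty/\xF^{\amalg}}$ on which \cref{eq:envcounit} is an equivalence. There is essentially no obstacle: the mathematical content sits entirely in \cref{EnvoverFff} (which itself rests on the pullback square \cref{eq:iOpb} of \cref{lem:iOcart}), and what is left is the routine unwinding of the definitions of $U'$, $\Env'$, and the notation $\mathcal{C}^{\otimes}_{(1)}$.
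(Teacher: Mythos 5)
Your proposal is correct and matches the paper's (implicit) argument: the corollary is stated with a \qed precisely because it follows formally from \cref{EnvoverFff} and the adjunction $\Env' \dashv U'$ via the standard fact that a fully faithful left adjoint identifies its source with the full subcategory where the counit is invertible. Your identification of the counit component as \cref{eq:envcounit} and the reduction of ``equivalence in $\SMC_{\infty/\xF^{\amalg}}$'' to ``equivalence of underlying symmetric monoidal \icats{}'' is exactly the intended unwinding.
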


We will now describe this full subcategory more explicitly:
\begin{propn}\label{propn:iopdimage} 
  For $\mathcal{C}^{\otimes} \in \SMC_{\infty/\xF^{\amalg}}$, the
  counit map \cref{eq:envcounit}  is an equivalence \IFF{} the
  two following conditions hold:
  \begin{enumerate}[(1)]
  \item\label{escond} Every object in $\mathcal{C}$ is equivalent to a tensor
    product $x_{1} \otimes \cdots \otimes x_{n}$ with $x_{i}
    \in \mathcal{C}_{(1)}$.
    \item\label{ffcond} Given objects $x_{1},\ldots,x_{n}$ and
      $y_{1},\ldots,y_{m}$ in $\mathcal{C}_{(1)}$, the morphism
      \[ \coprod_{\phi \in
          \Map_{\xF}(\mathbf{n},\mathbf{m})} \prod_{i=1}^{m}
        \Map_{\mathcal{C}}\left(\bigotimes_{j \in \phi^{-1}(i)} x_{j},
          y_{i}\right) \to \Map_{\mathcal{C}}\left(\bigotimes_{j=1}^{n}
          x_{j}, \bigotimes_{i=1}^{m} y_{i}\right),\]
      given by tensoring maps together, is an equivalence.
  \end{enumerate}
\end{propn}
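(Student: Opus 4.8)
The plan is to characterise directly those $\mathcal{C}^{\otimes} \in \SMC_{\infty/\xF^{\amalg}}$ for which the counit $\varepsilon \colon \Env(\mathcal{C}^{\otimes}_{(1)}) \to \mathcal{C}^{\otimes}$ is an equivalence. Write $\mathcal{O} := \mathcal{C}^{\otimes}_{(1)}$, the full suboperad of $\mathcal{C}^{\otimes}$ on the objects over $\mathbf{1}$. First I would reduce to the underlying \icats{}: since $\varepsilon$ is a symmetric monoidal functor it is in particular a morphism of cocartesian fibrations over $\xF_{*}$ preserving cocartesian edges, so it is an equivalence \IFF{} it induces an equivalence on the fibre over $\angled{1}$, the Segal conditions propagating this to the fibres over all $\angled{n}$. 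So it suffices to decide when the induced functor of underlying \icats{}, $F \colon \Env(\mathcal{O})_{\angled{1}} \to \mathcal{C}$, is fully faithful and essentially surjective, and to match these properties with conditions (1) and (2).

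Next I would describe $F$ using \cref{rmk:Envdesc}: there $\Env(\mathcal{O})_{\angled{1}}$ is identified with the subcategory $\mathcal{O}^{\act}$ of active morphisms, and $F$ --- being the symmetric monoidal extension of the suboperad inclusion $\mathcal{O} = \mathcal{C}^{\otimes}_{(1)} \hookrightarrow \mathcal{C}^{\otimes}$ --- carries an object $X \in \mathcal{O}_{\angled{n}} \simeq \mathcal{C}_{(1)}^{\times n}$, corresponding to a tuple $(x_{1}, \dots, x_{n})$, to the tensor product $x_{1} \otimes \cdots \otimes x_{n}$ in $\mathcal{C}$ (the target of the cocartesian lift of the unique active map $\angled{n} \to \angled{1}$). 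Since every object of $\Env(\mathcal{O})_{\angled{1}}$ is of this form, the essential image of $F$ is precisely the class of objects of $\mathcal{C}$ equivalent to such a tensor product; hence $F$ is essentially surjective \IFF{} condition (1) holds.

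For full faithfulness I would fix $X = (x_{1}, \dots, x_{n})$ and $Y = (y_{1}, \dots, y_{m})$ in $\mathcal{O}^{\act}$ and compute $\Map_{\Env(\mathcal{O})_{\angled{1}}}(X, Y)$. By \cref{rmk:Envdesc} such a morphism is a morphism $X \to Y$ in $\mathcal{O}$ with a commuting square over $\id_{\angled{1}}$, and uniqueness of inert--active factorizations forces the underlying map in $\xF_{*}$ to be active, \ie{} a function $\phi \colon \mathbf{n} \to \mathbf{m}$. Decomposing $Y$ along the inert maps $\rho_{i}$ and applying the Segal condition and axiom~(3) in the definition of an \iopd{} to $\mathcal{O}$, the subspace lying over a fixed $\phi$ splits as the product over $i = 1, \dots, m$ of the spaces of active morphisms $(x_{j})_{j \in \phi^{-1}(i)} \to y_{i}$ over $\angled{|\phi^{-1}(i)|} \to \angled{1}$; as $\mathcal{O}$ is a \emph{full} suboperad of $\mathcal{C}^{\otimes}$ each such space equals the corresponding one in $\mathcal{C}^{\otimes}$, which --- cocartesian transport along an active map to $\angled{1}$ computing the tensor product --- is $\Map_{\mathcal{C}}\bigl(\bigotimes_{j \in \phi^{-1}(i)} x_{j}, y_{i}\bigr)$. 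This gives
\[ \Map_{\Env(\mathcal{O})_{\angled{1}}}(X, Y) \;\simeq\; \coprod_{\phi \in \Map_{\xF}(\mathbf{n}, \mathbf{m})} \ \prod_{i=1}^{m} \Map_{\mathcal{C}}\Bigl( \bigotimes_{j \in \phi^{-1}(i)} x_{j},\, y_{i} \Bigr), \]
and unwinding the last part of \cref{rmk:Envdesc} identifies the map induced by $F$ with the map $(\ast)$: a morphism over $\phi$ given by maps $f_{i} \colon \bigotimes_{j \in \phi^{-1}(i)} x_{j} \to y_{i}$ goes to the composite of $\bigotimes_{i} f_{i}$ with the reindexing equivalence $\bigotimes_{j=1}^{n} x_{j} \simeq \bigotimes_{i=1}^{m} \bigotimes_{j \in \phi^{-1}(i)} x_{j}$ determined by $\phi$. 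Since every object of $\Env(\mathcal{O})_{\angled{1}}$ arises as such an $X$, this shows that $F$ is fully faithful \IFF{} $(\ast)$ is an equivalence for all tuples in $\mathcal{C}_{(1)}$, \ie{} \IFF{} condition (2) holds; together with the previous paragraph this proves the proposition.

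The hard part will be the final identification --- that the abstractly-defined counit $\varepsilon$ really does act on the mapping spaces above by ``tensoring maps together'', \ie{} literally as the map $(\ast)$. Everything else is formal once \cref{rmk:Envdesc} is available, but pinning down this match requires a diagram chase with the cocartesian pushforwards in the explicit model $\Env(\mathcal{O}) = \mathcal{O} \times_{\xF_{*}} \Act(\xF_{*})$: along inert maps, which restrict tuples, and along active maps, which form tensor products.
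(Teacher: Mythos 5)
Your proposal is correct and follows essentially the same route as the paper's proof: reduce to the functor of underlying $\infty$-categories $\epsilon\colon (\mathcal{C}^{\otimes}_{(1)})^{\act}\simeq\Env(\mathcal{C}^{\otimes}_{(1)})_{\angled{1}}\to\mathcal{C}$, match essential surjectivity with condition (1) via the description of objects as tuples sent to their tensor product, and match full faithfulness with condition (2) via the decomposition of mapping spaces in $\mathcal{O}^{\act}$ as a coproduct over $\phi\in\Map_{\xF}(\mathbf{n},\mathbf{m})$ of products. The step you flag as the hard part --- that $\epsilon$ acts on these mapping spaces by tensoring maps together, up to the reindexing equivalence determined by $\phi$ --- is exactly the point the paper also addresses (with a short parenthetical on the permutation of tensor factors), so your treatment is on the same footing as theirs.
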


\begin{remark}
  Condition (2) is the so-called ``hereditary condition'' considered in
  \cite{Markl:0601129,Borisov-Manin:0609748,MelliesTabareau,Kaufmann-Ward:1312.1269,Batanin-Kock-Weber:1510.08934}.
\end{remark}

\begin{proof}
  A morphism in $\SMC_{\infty/\xF^{\amalg}}$ is an equivalence \IFF{}
  the functor of underlying \icats{} is an equivalence. It therefore
  suffices to show that the given conditions are equivalent to the
  functor
  \[ \epsilon \colon (\mathcal{C}^{\otimes}_{(1)})^{\act} \simeq
    \Env(\mathcal{C}^{\otimes}_{(1)})_{\angled{1}} \to \mathcal{C}\]
  being an equivalence.

  An object of $(\mathcal{C}^{\otimes}_{(1)})^{\act}$ can be described
  as a list
  $(x_{1},\ldots,x_{n})$ where each $x_{i}$ is an object of
  $\mathcal{C}_{(1)}$, and $\epsilon(x_{1},\ldots,x_{n})$ is the
  tensor product $x_{1} \otimes \cdots \otimes x_{n}$. Condition
  \ref{escond} therefore corresponds precisely to $\epsilon$ being
  essentially surjective.

  A morphism in $(\mathcal{C}^{\otimes}_{(1)})^{\act}$ from
  $(x_{1},\ldots,x_{n})$ to $(y_{1},\ldots,y_{m})$ is given by a map
  $\phi \colon \mathbf{n} \to \mathbf{m}$ in $\xF$ together with a morphism
  $f_{i} \colon \bigotimes_{j \in \phi^{-1}(i)} x_{j} \to y_{i}$ lying over the unique
  map $\phi^{-1}(i) \to \mathbf{1}$, for every $i = 1,\ldots,m$. The
  functor $\epsilon$ takes this to the morphism
  \[ \bigotimes_{i=1}^{m} f_{i} \colon \bigotimes_{j=1}^{n} x_{j}
    \isoto \bigotimes_{i=1}^{m} \left(\bigotimes_{j \in \phi^{-1}(i)}
      x_{j} \right) \to \bigotimes_{i=1}^{m} y_{i}\]
  in $\mathcal{C}$. (The unnamed equivalence is explicit: it is
  given by permutation of tensor factors according to the bijection $\sigma_\phi \colon 
  \mathbf{n}\isoto\mathbf{n}$ obtained by factoring $\phi = \lambda_\phi \circ \sigma_\phi$
  where $\lambda_\phi$ is monotone and $\sigma_\phi$ is a bijection monotone on 
  fibres. For the present purposes, these permutations do not play any significant role.)
  In other words, there is an equivalence
  \[ \Map_{(\mathcal{C}^{\otimes}_{(1)})^{\act}}((x_{1},\ldots,x_{n}),
    (y_{1},\ldots,y_{m})) \simeq \coprod_{\phi \in
      \Map_{\xF}(\mathbf{n},\mathbf{m})} \prod_{i=1}^{m}
    \Map_{\mathcal{C}}\left(\bigotimes_{j \in \phi^{-1}(i)} x_{j},
    y_{i}\right), \]
  and the map to $\Map_{\mathcal{C}}(\bigotimes_{j=1}^{n} x_{j},
  \bigotimes_{i=1}^{m} y_{i})$ is given by tensoring maps
  together (after appropriately permuting tensor factors). Condition \ref{ffcond} therefore corresponds precisely to
  $\epsilon$ being fully faithful.
\end{proof}

\begin{remark}\label{rmk:iopdcondprime}
  Using the functor $\mathcal{C} \to \xF$, the morphism in
  \ref{ffcond} fits in a commutative triangle
  \[
    \begin{tikzcd}[cramped,column sep=0pt]
      \displaystyle \coprod_{\phi \in
          \Map_{\xF}(\mathbf{n},\mathbf{m})} \prod_{i=1}^{m}
        \Map_{\mathcal{C}}\left(\bigotimes_{j \in \phi^{-1}(i)} x_{j},
          y_{i}\right) \arrow{dr} \arrow{rr} & & \displaystyle \Map_{\mathcal{C}}\left(\bigotimes_{j=1}^{n}
          x_{j}, \bigotimes_{i=1}^{m} y_{i}\right) \arrow{dl} \\
         & \Map_{\xF}(\mathbf{n},\mathbf{m}),
    \end{tikzcd}
  \]
  so that passing to fibres we can equivalently phrase \ref{ffcond} as:
  \begin{itemize}
  \item[($2'$)]   For every morphism $\phi \colon \mathbf{n} \to \mathbf{m}$ in $\xF$,
    the map
    \[ \prod_{i=1}^{m} \Map_{\mathcal{C}}\left(\bigotimes_{j \in \phi^{-1}(i)} x_{j},
        y_{i}\right) \to \Map_{\mathcal{C}}\left(\bigotimes_{j=1}^{n}
        x_{j}, \bigotimes_{i=1}^{m} y_{i}\right)\!\!{}_{\phi}, \]
    given by tensoring morphisms, is an equivalence. 
  \end{itemize}
  In particular, taking $\phi$ to be $\id_{\mathbf{n}}$ we have
  equivalences
  \[ \prod_{i=1}^{n} \Map_{\mathcal{C}_{(1)}}(x_{i},y_{i}) \isoto
    \Map_{\mathcal{C}}\left(\bigotimes_{i=1}^{n} x_{i},
      \bigotimes_{i=1}^{n} y_{i}\right)\!\!{}_{\id_{\mathbf{n}}}
    \simeq \Map_{\mathcal{C}_{(n)}}\left(\bigotimes_{i=1}^{n} x_{i},
      \bigotimes_{i=1}^{n} y_{i}\right),\] where $\mathcal{C}_{(n)}$
  is the fibre of $\mathcal{C} \to \xF$ at $\mathbf{n}$. This says
  precisely that the functor
  $\mathcal{C}_{(1)}^{\times n} \to \mathcal{C}_{(n)}$ is fully
  faithful. On the other hand, condition \ref{escond} amounts to
  requiring the same functors to be essentially surjective. In the
  presence of condition \ref{ffcond} (or equivalently ($2'$)) we can
  therefore replace \ref{escond} by
  \begin{itemize}
  \item[($1'$)] For every $n$, the functor $\mathcal{C}_{(1)}^{\times
      n} \to \mathcal{C}_{(n)}$, induced by the tensor product, is an
    equivalence.
  \end{itemize}
  Alternatively, since the full faithfulness of this functor is also
  part of (2), we can replace (1) with
    \begin{itemize}
    \item[($1''$)] For every $n$, the map of spaces
      $(\mathcal{C}^{\simeq}_{(1)})^{\times n} \to
      \mathcal{C}^{\simeq}_{(n)}$, induced by the tensor product, is
      an equivalence.
    \end{itemize}
    Finally, note that we can reformulate ($2'$) for all objects at
    once as:
    \begin{itemize}
    \item[($2''$)] For every morphism $\phi \colon \mathbf{n} \to \mathbf{m}$ in $\xF$,
      the map \[ \prod_{i=1}^{m} \Map(\Delta^{1},
        \mathcal{C})_{\mathbf{n}_{i} \to \mathbf{1}} \to
        \Map(\Delta^{1}, \mathcal{C})_{\phi}, \]
    given by tensoring morphisms, is an equivalence.
  \end{itemize}
  This is equivalent to ($2'$) since we have a commutative square
  \[
    \begin{tikzcd}
      \prod_{i=1}^{m} \Map(\Delta^{1},
        \mathcal{C})_{\mathbf{n}_{i} \to \mathbf{1}} \arrow{r}
        \arrow{d} & \Map(\Delta^{1}, \mathcal{C})_{\phi} \arrow{d} \\
        \prod_{i=1}^{m} \mathcal{C}_{(n_{i})}^{\simeq} \times
        \mathcal{C}_{(1)}^{\simeq} \arrow{r}{\sim} &
        \mathcal{C}_{(n)}^{\simeq} \times \mathcal{C}_{(m)}^{\simeq},
    \end{tikzcd}
  \]
  where the bottom horizontal map is an equivalence by ($1''$), and
  the maps on fibres are those in ($2'$).
\end{remark}

Now we turn to the second description:
\begin{defn}
  The forgetful functor $\CMon(\mathcal{S}) \to \mathcal{S}$ has a
  left adjoint $\Sym \colon \mathcal{S} \to \CMon(\mathcal{S})$. Since
  the underlying \igpd{} functor $(\blank)^{\simeq} \colon \CatI \to
  \mathcal{S}$ preserves products, it induces a functor $\CMon(\CatI)
  \to \CMon(\mathcal{S})$, and we define $\PROPI$ as the pullback
  \[
    \begin{tikzcd}
      \PROPI \arrow{r} \arrow{d} & \CMon(\CatI) \arrow{d}{(\blank)^{\simeq}} \\
      \mathcal{S} \arrow{r}{\Sym} & \CMon(\mathcal{S}).
    \end{tikzcd}
  \]
  An object of $\PROPI$ is thus a symmetric monoidal \icat{}
  $\mathcal{C}$ together with an \igpd{} $X$ and an equivalence of
  symmetric monoidal \igpds{} $\Sym X \simeq \mathcal{C}^{\simeq}$.
\end{defn}

\begin{remark}
  As the name suggests, we think of the objects of $\PROPI$ as a good
  \icatl{} analogue of the classical notion of PROPs, but we will not
  try to justify this here. Note, however, that PROPs are usually
  defined to be symmetric monoidal categories whose underlying
  \emph{set} of objects is a free commutative monoid, while our
  definition corresponds for ordinary categories to having a free
  underlying symmetric monoidal \emph{groupoid}. This condition does
  have the advantage of being invariant under equivalence, whereas
  with the more traditional definition \emph{every} symmetric monoidal
  category is equivalent to a PROP (since every commutative monoid in
  sets admits a surjective map from a free one). On the other hand,
  this probably means that our \icat{} $\PROPI$ does not correspond to
  the Quillen model structure on simplicial PROPs of Hackney and
  Robertson~\cite{HackneyRobertsonProp}.
\end{remark}

\begin{propn}\label{propn:Enveqisfree}
  For any \iopd{} $\mathcal{O}$, the functor $i_{\mathcal{O}} \colon
  \mathcal{O} \to \Env(\mathcal{O})$ restricts to a morphism of
  \igpds{} $\mathcal{O}_{\angled{1}}^{\simeq} \to
  \Env(\mathcal{O})_{\angled{1}}^{\simeq}$ that is adjoint to an
  equivalence of commutative monoids
  \[ \Sym(\mathcal{O}_{\angled{1}}^{\simeq}) \isoto
  \Env(\mathcal{O})_{\angled{1}}^{\simeq}.\]
\end{propn}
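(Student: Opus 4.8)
The plan is to make both sides completely explicit and then observe that the comparison map is an equivalence on underlying spaces; since a morphism in $\CMon(\mathcal{S})$ is an equivalence as soon as the underlying map of spaces is, this suffices.

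First I would unwind $i_{\mathcal{O}}$ on the relevant fibres. By \cref{lem:iOcart}, $i_{\mathcal{O}}$ sends $X$ to $(X, \id_{\pi(X)})$, so on fibres over $\angled{1}$ it is, under the identification $\Env(\mathcal{O})_{\angled{1}} \simeq \mathcal{O}^{\act}$ from \cref{rmk:Envdesc}, the inclusion $\mathcal{O}_{\angled{1}} \hookrightarrow \mathcal{O}^{\act}$ of the objects lying over $\angled{1}$. Every equivalence in $\mathcal{O}$ lies over an isomorphism of $\xF_{*}$, which is active, so $(\mathcal{O}^{\act})^{\simeq} = \mathcal{O}^{\simeq}$ and $i_{\mathcal{O}}$ restricts on underlying \igpds{} to the inclusion $\mathcal{O}_{\angled{1}}^{\simeq} \hookrightarrow \mathcal{O}^{\simeq}$. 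Since $(\blank)^{\simeq} \colon \CatI \to \mathcal{S}$ preserves finite products, applying it to the symmetric monoidal \icat{} $\Env(\mathcal{O})$ gives a commutative monoid in $\mathcal{S}$ with underlying space $\Env(\mathcal{O})_{\angled{1}}^{\simeq} = \mathcal{O}^{\simeq}$, and the above inclusion is adjoint to a map of commutative monoids $c \colon \Sym(\mathcal{O}_{\angled{1}}^{\simeq}) \to \Env(\mathcal{O})_{\angled{1}}^{\simeq}$. It remains to show $c$ is an equivalence of underlying spaces.

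Second I would compute $\mathcal{O}^{\simeq}$. Pulling back $\xF_{*}^{\simeq} \simeq \coprod_{n \geq 0} B\Sigma_{n}$ along $\pi^{\simeq} \colon \mathcal{O}^{\simeq} \to \xF_{*}^{\simeq}$ gives $\mathcal{O}^{\simeq} \simeq \coprod_{n} (\mathcal{O}_{\angled{n}}^{\simeq})_{h\Sigma_{n}}$, where $\Sigma_{n} = \Aut_{\xF_{*}}(\angled{n})$ acts by cocartesian transport along its (inert) automorphisms. The Segal condition defining an \iopd{} gives $\mathcal{O}_{\angled{n}}^{\simeq} \isoto (\mathcal{O}_{\angled{1}}^{\simeq})^{\times n}$ via the $\rho_{i}$, and since $\rho_{i} \circ \sigma = \rho_{\sigma^{-1}(i)}$ this is $\Sigma_{n}$-equivariant for the permutation action on the target. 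Taking homotopy orbits and summing over $n$ identifies $\mathcal{O}^{\simeq}$ with $\coprod_{n} ((\mathcal{O}_{\angled{1}}^{\simeq})^{\times n})_{h\Sigma_{n}} = \Sym(\mathcal{O}_{\angled{1}}^{\simeq})$.

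Finally I would match $c$ with this identification. By the description of the tensor product on $\Env(\mathcal{O})_{\angled{1}}$ as concatenation in \cref{rmk:Envdesc}, the $n$-fold multiplication of the commutative monoid $\Env(\mathcal{O})_{\angled{1}}^{\simeq}$ carries a tuple $(x_{1}, \ldots, x_{n})$ of objects of $\mathcal{O}_{\angled{1}}^{\simeq}$ to the object of $\mathcal{O}_{\angled{n}}^{\simeq}$ corresponding to $(x_{1}, \ldots, x_{n})$ under the Segal equivalence. Hence on the $n$-th summand of $\Sym(\mathcal{O}_{\angled{1}}^{\simeq})$ the map $c$ is the inverse of the ($\Sigma_{n}$-equivariant) Segal equivalence followed by the inclusion of the summand $(\mathcal{O}_{\angled{n}}^{\simeq})_{h\Sigma_{n}} \hookrightarrow \mathcal{O}^{\simeq}$, which by the previous step is an equivalence onto that summand; so $c$ is an equivalence. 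The main obstacle is purely bookkeeping: pinning down the $\Sigma_{n}$-actions and checking that the abstractly defined map $c$ agrees summand by summand with the Segal equivalences. No single step is deep, but they must be assembled coherently, and that is where the care is required.
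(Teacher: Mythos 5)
Your proof is correct, but it takes a genuinely different route from the paper's. The paper argues by universal property: it introduces the suboperad $\mathcal{O}^{\xint}$ of inert morphisms, shows by a mapping-space computation (using that $\mathcal{O}^{\xint} \to \xF_{*}^{\xint}$ is the right Kan extension of $\mathcal{O}_{\angled{1}}^{\simeq}$ along $\{\angled{1}\} \hookrightarrow \xF_{*}^{\xint}$) that $\Env(\mathcal{O}^{\xint})$ is the \emph{free} symmetric monoidal \icat{} on the space $\mathcal{O}_{\angled{1}}^{\simeq}$, and then observes that $\Env(\mathcal{O}^{\xint}) \to \Env(\mathcal{O})$ is an equivalence on underlying symmetric monoidal \igpds{}. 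You instead compute both sides explicitly: you identify $\Env(\mathcal{O})_{\angled{1}}^{\simeq}$ with $\mathcal{O}^{\simeq}$, decompose it over $\xF_{*}^{\simeq} \simeq \coprod_{n} B\Sigma_{n}$ into homotopy quotients of the fibres, and match the summands with those of $\Sym(\mathcal{O}_{\angled{1}}^{\simeq}) \simeq \coprod_{n} ((\mathcal{O}_{\angled{1}}^{\simeq})^{\times n})_{h\Sigma_{n}}$ via the Segal equivalences. Both work. The paper's argument buys independence from the explicit formula for $\Sym$ and from all equivariance bookkeeping, since the comparison map is pinned down by a universal property; yours is more concrete and explains where the formula $\coprod_{n} X^{\times n}_{h\Sigma_{n}}$ actually comes from. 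Do note that the burden you flag as ``bookkeeping'' is the real content of your approach: to pass to homotopy orbits you need the Segal equivalence $\mathcal{O}_{\angled{n}}^{\simeq} \to (\mathcal{O}_{\angled{1}}^{\simeq})^{\times n}$ as an equivalence of local systems on $B\Sigma_{n}$, not merely an objectwise equivariant map, and the clean way to produce that coherence is to restrict the cocartesian fibration $\mathcal{O}^{\xint} \to \xF_{*}^{\xint}$ to maximal subgroupoids and invoke the Segal (right Kan extension) condition there --- at which point you have essentially reconstructed the ingredient on which the paper's proof is built.
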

\begin{proof}
  Consider the subcategory $\mathcal{O}^{\xint}$ of $\mathcal{O}$
  containing only the inert morphisms. This is also an \iopd{}, and for any
  \iopd{} $\mathcal{P}$ we have equivalences
  \[ \Map_{\OpdI}(\mathcal{O}^{\xint}, \mathcal{P}) \simeq
    \Map_{\Cat_{\infty/\xF_{*}^{\xint}}^{\txt{coc}}}(\mathcal{O}^{\xint},
    \mathcal{P} \times_{\xF_{*}} \xF_{*}^{\xint}) \simeq
    \Map_{\CatI}(\mathcal{O}^{\simeq}_{\angled{1}},
    \mathcal{P}_{\angled{1}}),\]
  where the first equivalence is obtained by pulling back along
  $\xF_{*}^{\xint} \to \xF_{*}$ and the second holds because
  $\mathcal{O}^{\xint}$ and $\mathcal{P} \times_{\xF_{*}}
  \xF_{*}^{\xint}$ are the cocartesian
  fibrations over $\xF_{*}^{\xint}$ for the right Kan extensions of
  $\mathcal{O}^{\simeq}_{\angled{1}}$ and $\mathcal{P}_{\angled{1}}$
  along the inclusion $\{\angled{1}\} \hookrightarrow \xF_{*}$, respectively.

  It follows that for a symmetric monoidal \icat{}
  $\mathcal{C}^{\otimes}$ we have a natural equivalence
  \[ \Map_{\SMCI}(\Env(\mathcal{O}^{\xint}), \mathcal{C}^{\otimes})
    \simeq \Map(\mathcal{O}^{\simeq}_{\angled{1}}, \mathcal{C}).\]
  Thus $\Env(\mathcal{O}^{\xint})$ has the universal property of the
  free symmetric monoidal \icat{} on
  $\mathcal{O}^{\simeq}_{\angled{1}}$ (which is also the free
  symmetric monoidal \igpd{}).

  On the other hand, from the construction of $\Env$ we see that the
  symmetric monoidal functor $\Env(\mathcal{O}^{\xint}) \to
  \Env(\mathcal{O})$ induced by the inclusion of $\mathcal{O}^{\xint}$
  is an equivalence on underlying symmetric monoidal \igpds{}. This
  shows that the inclusion of $\mathcal{O}^{\simeq}_{\angled{1}}$
  exhibits the underlying symmetric monoidal \igpd{} of
  $\Env(\mathcal{O})$ as free, which is what we wanted to prove.
\end{proof}

\begin{cor}
  The functor $\Env \colon \OpdI \to \SMCI \simeq \CMon(\CatI)$ fits
  in a commutative square
  \[
    \begin{tikzcd}
      \OpdI \arrow{r}{\Env} \arrow{d}[swap]{(\blank)_{\angled{1}}^{\simeq}} & \CMon(\CatI) \arrow{d}{(\blank)^{\simeq}} \\
      \mathcal{S} \arrow{r}{\Sym} & \CMon(\mathcal{S}),
    \end{tikzcd}
  \]
  and so the functor $\Env$ factors uniquely through a functor
  $\Env'' \colon \OpdI \to \PROPI$ over $\mathcal{S}$. \qed
\end{cor}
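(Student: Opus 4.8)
The plan is to note that Proposition~\ref{propn:Enveqisfree} already does the substantive work: for each \iopd{} $\mathcal{O}$ it produces an equivalence of commutative monoids between $\Sym(\mathcal{O}_{\angled{1}}^{\simeq})$ and $\Env(\mathcal{O})_{\angled{1}}^{\simeq}$, and the latter is by construction the value at $\mathcal{O}$ of $(\blank)^{\simeq}\circ\Env\colon\OpdI\to\CMon(\CatI)\to\CMon(\mathcal{S})$ (we apply the underlying-\igpd{} functor levelwise, which is legitimate since $(\blank)^{\simeq}$ preserves products). So the only things left are to promote this family of equivalences to a natural equivalence of functors $\OpdI\to\CMon(\mathcal{S})$, and then to obtain the factorization through $\PROPI$, which will be formal.

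For naturality, I would first recall that the maps $i_{(\blank)}$ are the components of the unit $\id_{\OpdI}\Rightarrow U\Env$ of the envelope adjunction, hence assemble into a natural transformation. Whiskering with the functor $(\blank)_{\angled{1}}^{\simeq}\colon\OpdI\to\mathcal{S}$, and using that $U$ leaves the fibre over $\angled{1}$ unchanged, yields a natural transformation with source $(\blank)_{\angled{1}}^{\simeq}$ and target the composite of $(\blank)^{\simeq}\circ\Env$ with the underlying-space functor $\CMon(\mathcal{S})\to\mathcal{S}$. Transposing along the adjunction $\Sym\dashv(\text{underlying space})$ gives a natural transformation
\[ \Sym\circ(\blank)_{\angled{1}}^{\simeq}\Longrightarrow(\blank)^{\simeq}\circ\Env\colon\OpdI\to\CMon(\mathcal{S}),\]
whose component at $\mathcal{O}$ is, by the very statement of Proposition~\ref{propn:Enveqisfree} (that the equivalence there is adjoint to the restriction of $i_{\mathcal{O}}$), precisely the equivalence recalled above. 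Since a natural transformation that is pointwise an equivalence is an equivalence, this exhibits the asserted commutative square.

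Finally, since $\PROPI$ was defined as the pullback of $(\blank)^{\simeq}\colon\CMon(\CatI)\to\CMon(\mathcal{S})$ along $\Sym\colon\mathcal{S}\to\CMon(\mathcal{S})$, I would invoke the universal property of this pullback in $\LCatI$ to obtain, from the commutative square just produced, a functor $\Env''\colon\OpdI\to\PROPI$, unique up to a contractible space of choices, whose composites with the two projections recover $\Env$ and $(\blank)_{\angled{1}}^{\simeq}$; in particular $\Env''$ lies over $\mathcal{S}$. I do not expect a genuine obstacle here: the only point that needs care is identifying the transposed natural transformation with the equivalence of Proposition~\ref{propn:Enveqisfree}, and that is immediate once $i_{(\blank)}$ is recognized as the unit of the envelope adjunction — which is exactly what makes this a corollary.
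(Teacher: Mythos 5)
Your proposal is correct and matches the paper's (implicit) argument: the paper states this corollary with no written proof, taking it to follow from Proposition~\ref{propn:Enveqisfree} together with the naturality of the unit maps $i_{(\blank)}$ and the universal property of the pullback defining $\PROPI$, which is exactly what you spell out. The one point you rightly flag as needing care --- that the transposed natural transformation agrees pointwise with the equivalence of the proposition --- is indeed the whole content, and your identification of it via the adjunction unit is the intended reading.
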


\begin{lemma}
  The functor $\Env'' \colon \OpdI \to \PROPI$ has a right adjoint
  $U''$, which takes $(\mathcal{C}^{\otimes}, \Sym(X) \simeq
  \mathcal{C}^{\simeq})$ to the full suboperad of
  $\mathcal{C}^{\otimes}$ on the objects in the subspace $X \subseteq
  \Sym(X) \simeq \mathcal{C}^{\simeq}$. 
\end{lemma}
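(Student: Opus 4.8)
The plan is to verify the adjunction $\Env'' \dashv U''$ directly, by producing for each \iopd{} $\mathcal{O}$ and each object $(\mathcal{C}^{\otimes}, X, \alpha)$ of $\PROPI$ --- where $\alpha \colon \Sym X \isoto \mathcal{C}^{\simeq}$ is the structure equivalence --- a natural equivalence
\[ \Map_{\PROPI}(\Env''(\mathcal{O}),(\mathcal{C}^{\otimes},X,\alpha)) \simeq \Map_{\OpdI}(\mathcal{O},\mathcal{C}^{\otimes}_{X}), \]
where $\mathcal{C}^{\otimes}_{X} \subseteq \mathcal{C}^{\otimes}$ denotes the full suboperad on the objects lying in $X \subseteq \mathcal{C}^{\simeq}$. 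Writing $\mathcal{C}$ for the underlying \icat{} of $\mathcal{C}^{\otimes}$, I would first record that $X \hookrightarrow \Sym X \simeq \mathcal{C}^{\simeq}$ is the inclusion of the degree-$1$ summand, hence a monomorphism of spaces, so that $\mathcal{C}_{X}$ is a genuine full subcategory of $\mathcal{C}$ with underlying \igpd{} $X$, and $\mathcal{C}^{\otimes}_{X}$ is well defined.

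Next I would compute the left-hand side. Since $\PROPI$ is by definition the homotopy pullback $\mathcal{S}\times_{\CMon(\mathcal{S})}\CMon(\CatI)$, mapping spaces in it decompose as pullbacks of mapping spaces in the three corners; combined with the description of $\Env''(\mathcal{O})$ as $\Env(\mathcal{O})$ with generating \igpd{} $\mathcal{O}^{\simeq}_{\angled{1}}$ and structure equivalence that of \cref{propn:Enveqisfree}, this gives
\[ \Map_{\PROPI}(\Env''(\mathcal{O}),(\mathcal{C}^{\otimes},X,\alpha)) \simeq \Map_{\mathcal{S}}(\mathcal{O}^{\simeq}_{\angled{1}},X)\times_{\Map_{\CMon(\mathcal{S})}(\Sym\mathcal{O}^{\simeq}_{\angled{1}},\Sym X)}\Map_{\SMCI}(\Env(\mathcal{O}),\mathcal{C}^{\otimes}). \]
Applying the adjunction $\Env \dashv U$ to the right-hand factor and the free/forgetful adjunction $\Sym \dashv (\blank)_{\angled{1}}$ for commutative monoids in $\mathcal{S}$ to the base, and using $\alpha$ to identify $(\Sym X)_{\angled{1}} \simeq \mathcal{C}^{\simeq}$, this pullback becomes
\[ \Map_{\mathcal{S}}(\mathcal{O}^{\simeq}_{\angled{1}},X)\times_{\Map_{\mathcal{S}}(\mathcal{O}^{\simeq}_{\angled{1}},\mathcal{C}^{\simeq})}\Map_{\OpdI}(\mathcal{O},\mathcal{C}^{\otimes}), \]
in which the left-hand leg is postcomposition with the inclusion $X \hookrightarrow \mathcal{C}^{\simeq}$ and the right-hand leg sends $g \colon \mathcal{O} \to \mathcal{C}^{\otimes}$ to $(g_{\angled{1}})^{\simeq} \colon \mathcal{O}^{\simeq}_{\angled{1}} \to \mathcal{C}^{\simeq}$. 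The one point here that requires genuine checking is the identification of the right-hand leg: unwinding the two adjunction equivalences, it is the map induced by the unit $i_{\mathcal{O}} \colon \mathcal{O} \to \Env(\mathcal{O})$ together with the equivalence $\Env(\mathcal{O})^{\simeq}_{\angled{1}} \simeq \Sym(\mathcal{O}^{\simeq}_{\angled{1}})$, and \cref{propn:Enveqisfree} says precisely that this equivalence is adjoint to $(i_{\mathcal{O}})^{\simeq}_{\angled{1}}$, so the two composites agree. I expect this kind of bookkeeping with units and counits, rather than any substantive argument, to be the main obstacle.

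It then remains to identify this last pullback with $\Map_{\OpdI}(\mathcal{O},\mathcal{C}^{\otimes}_{X})$. Since $\mathcal{C}^{\otimes}_{X} \hookrightarrow \mathcal{C}^{\otimes}$ is a fibrewise full subcategory inclusion --- in particular a fully faithful functor over $\xF_{*}$ that reflects inert morphisms --- the induced map $\Map_{\OpdI}(\mathcal{O},\mathcal{C}^{\otimes}_{X}) \to \Map_{\OpdI}(\mathcal{O},\mathcal{C}^{\otimes})$ is a monomorphism of spaces whose image is the set of morphisms of \iopds{} that factor through $\mathcal{C}^{\otimes}_{X}$; and because $\mathcal{C}^{\otimes}_{X}$ is a full suboperad and any morphism of \iopds{} preserves inert morphisms --- hence carries $\rho_{i}$-pushforwards to $\rho_{i}$-pushforwards --- a morphism $g \colon \mathcal{O} \to \mathcal{C}^{\otimes}$ factors through $\mathcal{C}^{\otimes}_{X}$ if and only if it carries the objects of $\mathcal{O}$ lying over $\angled{1}$ into $\mathcal{C}_{X}$, \ie{} if and only if $(g_{\angled{1}})^{\simeq}$ factors through $X \subseteq \mathcal{C}^{\simeq}$. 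As $X \to \mathcal{C}^{\simeq}$ is a monomorphism, this exhibits $\Map_{\OpdI}(\mathcal{O},\mathcal{C}^{\otimes}_{X})$ as exactly the pullback $\Map_{\mathcal{S}}(\mathcal{O}^{\simeq}_{\angled{1}},X)\times_{\Map_{\mathcal{S}}(\mathcal{O}^{\simeq}_{\angled{1}},\mathcal{C}^{\simeq})}\Map_{\OpdI}(\mathcal{O},\mathcal{C}^{\otimes})$ along the same two legs, so it agrees with the left-hand side. All the identifications used (the pullback decomposition of mapping spaces in $\PROPI$, the two adjunctions, and the description of maps of \iopds{} into a full suboperad) are natural in $\mathcal{O}$ and in $(\mathcal{C}^{\otimes},X,\alpha)$, so the resulting equivalence is natural and, by the Yoneda lemma, exhibits $U''$ as a right adjoint to $\Env''$ given by the stated formula; one can moreover read off that the unit is the corestriction of $i_{\mathcal{O}}$ to the full suboperad $U''\Env''(\mathcal{O})$ and that the counit is induced by the counit of $\Env \dashv U$.
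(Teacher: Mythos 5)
Your proposal is correct and follows essentially the same route as the paper: both compute $\Map_{\PROPI}(\Env''(\mathcal{O}),(\mathcal{C}^{\otimes},\alpha))$ as a pullback of mapping spaces coming from the defining pullback of $\PROPI$, rewrite it using the adjunctions $\Env\dashv U$ and $\Sym\dashv(\blank)$ together with \cref{propn:Enveqisfree}, and then identify the resulting pullback over $\Map_{\mathcal{S}}(\mathcal{O}^{\simeq}_{\angled{1}},\mathcal{C}^{\simeq})$ with the space of \iopd{} maps into the full suboperad on $X$. The extra bookkeeping you flag (compatibility of the adjunction units and the monomorphism argument for factoring through the full suboperad) is exactly what the paper leaves implicit, and your treatment of it is sound.
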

\begin{proof}
  Given $\mathcal{O} \in \OpdI$ and $(\mathcal{C}^{\otimes}, \alpha
  \colon \Sym(X)
  \simeq \mathcal{C}^{\simeq}) \in \PROPI$ we have a natural pullback
  square
  \[
    \begin{tikzcd}
      \Map_{\PROPI}(\Env''(\mathcal{O}),
      (\mathcal{C}^{\otimes},\alpha)) \arrow{r} \arrow{d} &
      \Map_{\SMCI}(\Env(\mathcal{O}), \mathcal{C}^{\otimes}) \arrow{d}
      \\
      \Map_{\mathcal{S}}(\mathcal{O}^{\simeq}_{\angled{1}}, X)
      \arrow{r} &
      \Map_{\CMon(\mathcal{S})}(\Env(\mathcal{O})_{\angled{1}}^{\simeq},
      \mathcal{C}^{\simeq}).
    \end{tikzcd}
  \]
  We can rewrite the right-hand part of this square
  using the adjunction $\Env \dashv U$ as well as
  the free--forgetful adjunction for commutative monoids as
  \[
    \begin{tikzcd}
      \Map_{\PROPI}(\Env''(\mathcal{O}),
      (\mathcal{C}^{\otimes},\alpha)) \arrow{r} \arrow{d} &
      \Map_{\OpdI}(\mathcal{O}, U\mathcal{C}^{\otimes}) \arrow{d}
      \\
      \Map_{\mathcal{S}}(\mathcal{O}^{\simeq}_{\angled{1}}, X)
      \arrow[hookrightarrow]{r} &
      \Map_{\mathcal{S}}(\mathcal{O}_{\angled{1}}^{\simeq},
      \mathcal{C}^{\simeq}),
    \end{tikzcd}
  \]
  where the bottom horizontal map is now the inclusion of those maps
  that factor through $X \hookrightarrow \Sym(X) \simeq \mathcal{C}^{\simeq}$.
  The pullback is then precisely the space of \iopd{} maps
  $\mathcal{O} \to U\mathcal{C}^{\otimes}$ that factor through the full
  suboperad $U''\mathcal{C}^{\otimes}$ on the objects in $X$, so that
  we have a natural equivalence
  \[\Map_{\PROPI}(\Env''(\mathcal{O}),
      (\mathcal{C}^{\otimes},\alpha)) \simeq \Map_{\OpdI}(\mathcal{O},
      U''(\mathcal{C}^{\otimes},\alpha)),\]
    as required.
\end{proof}

\begin{propn}
  $\Env'' \colon \OpdI \to \PROPI$ is fully faithful.
\end{propn}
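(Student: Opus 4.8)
The plan is to mimic the proof of Proposition~\ref{EnvoverFff}: since we have an adjunction $\Env'' \dashv U''$, it suffices to show that the unit transformation $\mathrm{id}_{\OpdI} \to U''\Env''$ is an equivalence. So fix an \iopd{} $\mathcal{O}$; I need to identify the unit map $\mathcal{O} \to U''\Env''(\mathcal{O})$ with an equivalence of \iopds{}.

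First I would unwind what $U''\Env''(\mathcal{O})$ is. By \cref{propn:Enveqisfree}, $\Env''(\mathcal{O})$ is the pair $(\Env(\mathcal{O}), \Sym(\mathcal{O}_{\angled 1}^{\simeq}) \isoto \Env(\mathcal{O})^{\simeq})$, where the equivalence is adjoint to the map $i_{\mathcal{O}}$ restricted to underlying \igpds{} of the fibres over $\angled 1$. Applying $U''$, we get the full suboperad of $\Env(\mathcal{O})$ spanned by the objects lying in the subspace $\mathcal{O}_{\angled 1}^{\simeq} \subseteq \Env(\mathcal{O})^{\simeq}$ (under the equivalence $\Env(\mathcal{O})_{\angled 1}^{\simeq} \simeq (\mathcal{O}^{\act})^{\simeq}$, this subspace is spanned by the length-one sequences, \ie{} by the objects coming from $\mathcal{O}_{\angled 1}$). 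The key observation is then that this full suboperad is precisely the image of the map $i_{\mathcal{O}} \colon \mathcal{O} \to \Env(\mathcal{O})$: indeed, by the pullback square \cref{eq:iOpb} of \cref{lem:iOcart}, $i_{\mathcal{O}}$ is fully faithful (it is the pullback of the fully faithful functor $i \colon \xF_{*} \to \Act(\xF_{*})$ that picks out identity maps), and on objects it lands exactly in those $(X,\alpha)$ with $\alpha$ an isomorphism, \ie{} in the full suboperad on $\mathcal{O}_{\angled 1}$. Since $i_{\mathcal{O}}$ is a map of \iopds{} over $\xF_*$ (as $ti = \id$), it factors through this full suboperad, and the factored map $\mathcal{O} \to U''\Env''(\mathcal{O})$ is then fully faithful and essentially surjective, hence an equivalence of \iopds{}.

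The one point requiring care is to verify that the map $\mathcal{O} \to U''\Env''(\mathcal{O})$ produced this way really is the unit of the adjunction $\Env'' \dashv U''$, rather than merely \emph{some} equivalence. This follows by tracing through the construction of the right adjoint $U''$ in the preceding lemma: the unit is the map classified under the natural equivalence $\Map_{\PROPI}(\Env''\mathcal{O}, (\mathcal{C}^{\otimes},\alpha)) \simeq \Map_{\OpdI}(\mathcal{O}, U''(\mathcal{C}^{\otimes},\alpha))$ by the identity on $\Env''\mathcal{O}$, and the pullback-square description of that equivalence shows it sends $\id$ to the \iopd{} map underlying the $\Env \dashv U$ unit $i_{\mathcal{O}} \colon \mathcal{O} \to U\Env(\mathcal{O})$, corestricted along the identification of $U''\Env''\mathcal{O}$ as a full suboperad of $U\Env\mathcal{O}$. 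So the unit is exactly the corestriction of $i_{\mathcal{O}}$ discussed above.

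I expect the main obstacle to be purely bookkeeping: keeping straight the several nested adjunctions ($\Sym \dashv (\blank)^{\simeq}$, $\Env \dashv U$, $\Env'' \dashv U''$) and the identification $\Env(\mathcal{O})_{\angled 1} \simeq \mathcal{O}^{\act}$ from \cref{rmk:Envdesc}, so that the description of $U''\Env''(\mathcal{O})$ as ``the full suboperad on length-one sequences'' matches up on the nose with the essential image of $i_{\mathcal{O}}$. Once that identification is in place, full faithfulness of $i_{\mathcal{O}}$ is immediate from \cref{lem:iOcart} and essential surjectivity onto the suboperad is a direct object count, so there is no real analytic content.
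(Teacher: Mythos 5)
Your proposal is correct and follows essentially the same route as the paper: reduce to showing the unit $\id \to U''\Env''$ is an equivalence, then use the pullback square \cref{eq:iOpb} of \cref{lem:iOcart} to identify $\mathcal{O}$ with the full suboperad of $\Env(\mathcal{O})$ on the objects coming from $\mathcal{O}^{\simeq}_{\angled{1}} \subseteq \Sym(\mathcal{O}^{\simeq}_{\angled{1}}) \simeq \Env(\mathcal{O})^{\simeq}_{\angled{1}}$. The paper's proof is just a terser version of the same argument; your extra care in checking that the resulting equivalence really is the unit is a reasonable elaboration, not a deviation.
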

\begin{proof}
  It suffices to show that the unit transformation $\id \to U''\Env''$
  is an equivalence, which follows from the pullback square
  \cref{eq:iOpb} in \cref{lem:iOcart}, since this exhibits
  $\mathcal{O}$ as the full suboperad of $\Env''(\mathcal{O})$ spanned
  by objects that lie over $\mathbf{1}$ in $\xF$, which are precisely
  the objects that lie in $\mathcal{O}^{\simeq}_{\angled{1}}$ under
  the equivalence $\Sym \mathcal{O}^{\simeq}_{\angled{1}} \simeq
  \Env''(\mathcal{O})_{\angled{1}}^{\simeq}$.
\end{proof}

\begin{cor}\label{cor:opd-counit}
  $\OpdI$ is equivalent to the full subcategory of $\PROPI$ consisting
  of pairs $(\mathcal{C}^{\otimes}, \alpha \colon \Sym(X) \simeq
  \mathcal{C}^{\simeq})$  such that the counit map
  \begin{equation}
    \label{eq:U''counit}
   \Env(U''(\mathcal{C}^{\otimes},\alpha)) \to
   \mathcal{C}^{\otimes}
  \end{equation}
  is an equivalence. \qed
\end{cor}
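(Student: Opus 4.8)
The plan is to deduce this from the general fact that the essential image of a fully faithful left adjoint is exactly the full subcategory of objects at which the counit is an equivalence. By the preceding proposition $\Env'' \colon \OpdI \to \PROPI$ is fully faithful, and we have shown it is left adjoint to $U''$. For any adjunction $F \dashv G$ with $F$ fully faithful --- equivalently, with invertible unit $\eta$ --- an object $b$ of the target lies in the essential image of $F$ if and only if the counit $\varepsilon_{b} \colon FG(b) \to b$ is an equivalence: if $\varepsilon_{b}$ is invertible then $b \simeq FG(b)$, and conversely if $b \simeq F(a)$ then the triangle identity $\varepsilon_{F(a)} \circ F(\eta_{a}) = \id_{F(a)}$ together with the invertibility of $F(\eta_{a})$ forces $\varepsilon_{F(a)}$, hence $\varepsilon_{b}$, to be invertible. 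Applying this to $\Env'' \dashv U''$ identifies $\OpdI$, via $\Env''$, with the full subcategory of $\PROPI$ spanned by the pairs $(\mathcal{C}^{\otimes}, \alpha)$ at which the counit $\Env''(U''(\mathcal{C}^{\otimes},\alpha)) \to (\mathcal{C}^{\otimes},\alpha)$ is an equivalence.

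It then remains to check that this counit in $\PROPI$ is an equivalence precisely when the underlying map of symmetric monoidal \icats{} $\Env(U''(\mathcal{C}^{\otimes},\alpha)) \to \mathcal{C}^{\otimes}$ of \cref{eq:U''counit} is. Since $\PROPI$ was defined as the pullback $\mathcal{S} \times_{\CMon(\mathcal{S})} \CMon(\CatI)$, a morphism in $\PROPI$ is an equivalence if and only if both its image in $\SMCI \simeq \CMon(\CatI)$ and its image in $\mathcal{S}$ are equivalences. But by construction $\Env''(\mathcal{O})$ has underlying space of objects $\mathcal{O}^{\simeq}_{\angled{1}}$, while $U''(\mathcal{C}^{\otimes},\alpha)$ has underlying space of objects the chosen subspace $X \subseteq \mathcal{C}^{\simeq}$; hence the $\mathcal{S}$-coordinate of the counit is (equivalent to) the identity of $X$, so the counit is an equivalence in $\PROPI$ exactly when its $\SMCI$-coordinate --- which is the map \cref{eq:U''counit} --- is an equivalence.

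The only real work here is bookkeeping: matching the abstractly defined counit of $\Env'' \dashv U''$ with the explicit map \cref{eq:U''counit}, and noting that the $\mathcal{S}$-coordinate of the pullback defining $\PROPI$ contributes nothing. No geometric or combinatorial input beyond the preceding results is required.
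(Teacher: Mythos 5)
Your proposal is correct and follows exactly the reasoning the paper intends (the corollary is stated with an immediate \(\square\), relying on the standard fact that the essential image of a fully faithful left adjoint is the locus where the counit is invertible, together with the observation --- which the paper itself spells out at the start of the proof of \cref{propn:counit-hereditary} --- that the \(\mathcal{S}\)-coordinate of the counit is automatically an equivalence, so only the \(\SMCI\)-coordinate \cref{eq:U''counit} matters). No discrepancies.
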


We can also give an explicit description of this subcategory:
\begin{propn}\label{propn:counit-hereditary}
  For $(\mathcal{C}^{\otimes}, \alpha \colon \Sym(X) \simeq
  \mathcal{C}^{\simeq})$ in $\PROPI$, the
  counit map \cref{eq:U''counit}  is an equivalence \IFF{} the
  following ``hereditary'' condition holds:
  \begin{enumerate}
  \item[\textup{($*$)}] Given objects $x_{1},\ldots,x_{n}$ and
      $y_{1},\ldots,y_{m}$ in $X \subseteq \mathcal{C}^{\simeq}$, the
      morphism
      \[ \coprod_{\phi \in
          \Map_{\xF}(\mathbf{n},\mathbf{m})} \prod_{i=1}^{m}
        \Map_{\mathcal{C}}\left(\bigotimes_{j \in \phi^{-1}(i)} x_{j},
          y_{i}\right) \to \Map_{\mathcal{C}}\left(\bigotimes_{j=1}^{n}
          x_{j}, \bigotimes_{i=1}^{m} y_{i}\right),\]
      given by tensoring maps together, is an equivalence.
  \end{enumerate}
\end{propn}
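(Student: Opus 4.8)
The plan is to mimic the proof of \cref{propn:iopdimage}: reduce the asserted equivalence of symmetric monoidal \icats{} to an equivalence of underlying \icats{}, and then treat essential surjectivity and full faithfulness separately — the point being that here essential surjectivity holds automatically, so that only the hereditary condition $(*)$ is left over. First I would note that a morphism in $\PROPI$ is an equivalence if and only if its underlying symmetric monoidal functor is, hence if and only if the underlying functor of \icats{} is an equivalence (the $X$-component of the morphism is then forced to be an equivalence too). Write $\mathcal{O} := U''(\mathcal{C}^{\otimes},\alpha)$ for the full suboperad of $\mathcal{C}^{\otimes}$ on the objects in $X \subseteq \Sym(X) \simeq \mathcal{C}^{\simeq}$. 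By \cref{rmk:Envdesc}, the underlying \icat{} of $\Env(\mathcal{O})$ is $\mathcal{O}^{\act}$: its objects are lists $(x_{1},\ldots,x_{n})$ with each $x_{i}$ in $X$; a morphism to $(y_{1},\ldots,y_{m})$ is a map $\phi \colon \mathbf{n} \to \mathbf{m}$ in $\xF$ together with morphisms $f_{i} \colon \bigotimes_{j \in \phi^{-1}(i)} x_{j} \to y_{i}$ lying over $\phi^{-1}(i) \to \mathbf{1}$; and the counit acts by $\epsilon(x_{1},\ldots,x_{n}) = x_{1} \otimes \cdots \otimes x_{n}$. These descriptions are exactly as in the proof of \cref{propn:iopdimage}, and remain valid since mapping spaces in the full suboperad $\mathcal{O}$ are computed in $\mathcal{C}^{\otimes}$ and $\mathcal{O}$ is itself an \iopd{}, so the defining axioms of an \iopd{} apply. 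It thus suffices to determine when $\epsilon \colon \mathcal{O}^{\act} \to \mathcal{C}$ is an equivalence of \icats{}.

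Essential surjectivity of $\epsilon$ holds \emph{unconditionally}. Indeed, by the very definition of $\PROPI$ we have $\mathcal{C}^{\simeq} \simeq \Sym(X) = \coprod_{n \geq 0} X^{\times n}_{h\Sigma_{n}}$, so every object of $\mathcal{C}$ is equivalent to a tensor product $x_{1} \otimes \cdots \otimes x_{n}$ of objects of $X$, \ie{} lies in the image of $\epsilon$. This is precisely where the argument diverges from that of \cref{propn:iopdimage}: the essential-surjectivity condition \ref{escond} there is now subsumed by the ambient category $\PROPI$ and imposes no further restriction.

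For full faithfulness, the description of $\mathcal{O}^{\act}$ above yields, for all tuples, an equivalence
\[ \Map_{\mathcal{O}^{\act}}\bigl((x_{1},\ldots,x_{n}),(y_{1},\ldots,y_{m})\bigr) \simeq \coprod_{\phi \in \Map_{\xF}(\mathbf{n},\mathbf{m})}\ \prod_{i=1}^{m} \Map_{\mathcal{C}}\Bigl(\bigotimes_{j \in \phi^{-1}(i)} x_{j},\, y_{i}\Bigr), \]
under which $\epsilon$ corresponds to the map to $\Map_{\mathcal{C}}(\bigotimes_{j} x_{j}, \bigotimes_{i} y_{i})$ given by tensoring morphisms together (the permutation bookkeeping playing no role, just as in \cref{propn:iopdimage}). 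Since every object of $\mathcal{O}^{\act}$ is literally of this form, and by the previous paragraph every object of $\mathcal{C}$ is of this form up to equivalence, $\epsilon$ is fully faithful precisely when all these comparison maps are equivalences — which is exactly condition $(*)$. Combining the two halves yields the proposition. I do not expect any genuine obstacle: the only thing to keep straight is that the freeness hypothesis in $\PROPI$ is exactly what takes over the role of condition \ref{escond} of \cref{propn:iopdimage}, so that after the reduction only $(*)$ survives; the remaining bookkeeping (mapping spaces in full suboperads, permutations of tensor factors) is a transcription of the earlier proof.
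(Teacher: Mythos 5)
Your proof is correct and follows essentially the same route as the paper's: reduce to the underlying functor $\epsilon \colon \Env(U''\mathcal{C})_{\angled{1}} \to \mathcal{C}$ being an equivalence, observe that essential surjectivity is automatic from the freeness hypothesis, and identify full faithfulness with $(*)$ via the mapping-space computation from the proof of \cref{propn:iopdimage}. The only cosmetic difference is that the paper obtains essential surjectivity by citing \cref{propn:Enveqisfree} (the counit is an equivalence on underlying symmetric monoidal \igpds{}), whereas you argue directly from the equivalence $\Sym(X) \simeq \mathcal{C}^{\simeq}$; both rest on the same fact.
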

\begin{remark}
  The PROP structure together with the hereditary condition can be
  seen as an $\infty$-categorical version of what Kaufmann and
  Ward call \emph{Feynman 
  categories} 
  (\cite[Definition~1.1]{Kaufmann-Ward:1312.1269}; see also 
  \cite[3.2]{Batanin-Kock-Weber:1510.08934} for a version closer to ours).
  \cref{cor:opd-counit} and \cref{propn:counit-hereditary}
  together are then an $\infty$-categorical version of the equivalence established
  in \cite[5.16]{Batanin-Kock-Weber:1510.08934} between Feynman categories
  and operads.
\end{remark}

\begin{proof}[Proof of \cref{propn:counit-hereditary}]
  A morphism in $\PROPI$ is an equivalence \IFF{} it projects to an
  equivalence in both $\mathcal{S}$ and $\SMCI$. By construction the
  counit maps to an equivalence in $\mathcal{S}$, and the forgetful
  functor from $\SMCI$ to $\CatI$ is conservative, so it suffices to
  show the given condition is equivalent to the functor
  \[ \epsilon \colon \Env(U''\mathcal{C})_{\angled{1}} \to \mathcal{C} \]
  being an equivalence.

  From \cref{propn:Enveqisfree} we see that the underlying map of
  symmetric monoidal \igpds{} of $\epsilon$ is an equivalence, so that
  $\epsilon$ is in particular essentially surjective. This means we
  only need to show the given condition is equivalent to $\epsilon$
  being fully faithful. That in turn follows from identifying the
  mapping spaces in $\Env(U''\mathcal{C})_{\angled{1}} \simeq
  (U''\mathcal{C})^{\act}$ as in the proof of \cref{propn:iopdimage}. 
\end{proof}

\section{From Symmetric Monoidal $\infty$-Categories to Presheaves}
\label{sec:x}

Our goal in this section is to use the description of \iopds{} as a
full subcategory of $\SMC_{\infty/\xF^{\amalg}}$ from
\cref{propn:iopdimage} to give a presentation of \iopds{} as a
localization of a presheaf \icat{}. We first recall the description of
\icats{} as complete Segal spaces, and the more general notion of
Segal $\mathcal{O}$-spaces over an algebraic pattern $\mathcal{O}$, in
\S\ref{subsec:segsp}. Then we prove in \S\ref{subsec:sliceseg} that
overcategories in Segal $\mathcal{O}$-spaces can be described as
Segal spaces for another algebraic pattern. We apply this to symmetric
monoidal \icats{} in \S\ref{subsec:slsmc}, which in particular gives a
presentation of $\SMC_{\infty/\xF^{\amalg}}$, and then finally apply
this to describe \iopds{} in \S\ref{sec:appl-infty-oper}.

\subsection{Segal Spaces}\label{subsec:segsp}
Here we briefly recall Rezk's definition of \icats{} as complete Segal
spaces. As we will consider several similar structures, it is
convenient to do so using some terminology from \cite{patterns1}:
\begin{defn}
  An \emph{algebraic pattern} is an \icat{} $\mathcal{O}$ equipped
  with a factorization system (whereby every morphism factors as an
  \emph{inert} morphism followed by an \emph{active} morphism) and a
  collection of \emph{elementary objects}. We write
  $\mathcal{O}^{\xint}$ and $\mathcal{O}^{\act}$ for the subcategories
  containing only the inert and active maps, respectively, and
  $\mathcal{O}^{\el} \subseteq \mathcal{O}^{\xint}$ for the full
  subcategory of elementary objects and inert maps between them.
  
  The purpose of algebraic patterns is to be an abstract general
  setting for Segal conditions, as we proceed to explain. A
  basic example is the category $\Dop$ (as explained in
  \ref{Delta-algebraicpattern} below).
\end{defn}

\begin{notation}
  If $\mathcal{O}$ is an algebraic pattern, then for $X \in
  \mathcal{O}$ we write \[\mathcal{O}^{\el}_{X/} := \mathcal{O}^{\el}
  \times_{\mathcal{O}^{\xint}} \mathcal{O}^{\xint}_{X/}\] for the
  \icat{} of inert maps from $X$ to elementary objects.
\end{notation}

\begin{defn}
  Let $\mathcal{O}$ be an algebraic pattern and $\mathcal{C}$ an
  \icat{} with limits of shape $\mathcal{O}^{\el}_{X/}$ for all $X \in
  \mathcal{O}$. Then a \emph{Segal $\mathcal{O}$-object} in
  $\mathcal{C}$ is a functor $F \colon \mathcal{O} \to \mathcal{C}$
  such that for all $X \in \mathcal{O}$ the natural map
  \[ F(X) \to \lim_{E \in \mathcal{O}^{\el}_{X/}} F(E) \] is an
  equivalence. We call a Segal $\mathcal{O}$-object in the \icat{}
  $\mathcal{S}$ a \emph{Segal $\mathcal{O}$-space}.
\end{defn}
\begin{remark}
  Equivalently, a Segal $\mathcal{O}$-object is a functor $F \colon
  \mathcal{O} \to \mathcal{C}$ such that the restriction
  $F|_{\mathcal{O}^{\xint}}$ is a right Kan extension of $F|_{\mathcal{O}^{\el}}$.
\end{remark}

\begin{notation}
  If $\mathcal{O}$ is an algebraic pattern, we write
  $\Seg_{\mathcal{O}}(\mathcal{C})$ for the full subcategory of
  $\Fun(\mathcal{O}, \mathcal{C})$ spanned by the Segal
  $\mathcal{O}$-objects.
\end{notation}

\begin{ex}
  We consider the category $\xF_{*}$ as an algebraic pattern using the
  factorization system of \cref{defn:F*fact} and with $\angled{1}$ as
  the unique elementary object. Then a Segal $\xF_{*}$-object in an
  \icat{} $\mathcal{C}$ is precisely a commutative monoid in the sense
  of \cref{defn:cmon}.
\end{ex}

\begin{notation}
  We write $\simp$ for the simplex category, \ie{} the category of
  ordered sets $[n] := \{0 < 1 < \cdots < n\}$ ($n = 0,1,\ldots$) and
  order-preserving maps between them. A morphism $\phi \colon [n] \to
  [m]$ in $\simp$ is called
  \emph{inert} if it is a subinterval inclusion, \ie{} $\phi(i) =
  \phi(0)+i$ for all $i$, and \emph{active} if it preserves the end
  points, \ie{} $\phi(0) = 0$ and $\phi(n) = m$. The active and inert
  morphisms form a factorization system on $\simp$.
  For $0 \leq i \leq j \leq n$, we write $\iota_{ij} \colon [j-i] \hookrightarrow [n]$ for the inert
  map in $\simp$ given by $\iota_{ij}(t) = i+t$, \ie{} the inclusion
  of $\{i,i+1,\ldots,j\}$.
\end{notation}

\begin{ex}\label{Delta-algebraicpattern}
  We view $\Dop$ as an algebraic pattern using this inert--active
  factorization system, with $[0]$ and $[1]$ as the elementary
  objects. A Segal $\Dop$-object in an \icat{} $\mathcal{C}$ is then a
  simplicial object $F \colon \Dop \to \mathcal{C}$ such that the
  natural map
  \[ F([n]) \to F([1]) \times_{F([0])} \cdots \times_{F([0])} F([1])\]
  determined by the inert maps $[0],[1] \hookrightarrow [n]$ in
  $\simp$ (\ie{} the maps $\iota_{ii}$ and $\iota_{i(i+1)}$), is an
  equivalence. In particular, a Segal $\Dop$-space is precisely a
  \emph{Segal space} in the sense of Rezk~\cite{RezkCSS}.
\end{ex}

\begin{notation}
  Let $E^{1} \in \Seg_{\Dop}(\mathcal{S})$ denote the nerve of the
  generic equivalence, \ie{} the category with two objects and a
  unique morphism between any pair of objects. (Equivalently, this is
  the simplicial set with $n$-simplices $E^{1}_{n} =
  \{0,1\}^{\mathbf{n}}$.)
\end{notation}

\begin{defn}
  For $X \in \Seg_{\Dop}(\mathcal{S})$, an \emph{equivalence} in $X$
  is a morphism $E^{1}\to X$. We write $X^{\txt{eq}}:=
  \Map_{\Seg_{\Dop}(\mathcal{S})}(E^{1}, X)$ for the space of
  equivalences in $X$. The Segal space $X$ is \emph{complete} if the
  map $X_{0} \to X^{\txt{eq}}$ given by composition with $E^{1} \to
  \Delta^{0}$ is an equivalence. (In other words, $X$ is complete if
  it is local with respect to this morphism.) We write
  $\CSeg_{\Dop}(\mathcal{S}) \subseteq \Seg_{\Dop}(\mathcal{S})$ for
  the full subcategory spanned by the complete Segal spaces.
\end{defn}

\begin{thm}[Joyal--Tierney \cite{JoyalTierney}]\label{thm:CSSicat}
  The restricted Yoneda embedding
  \[\CatI \to \Fun(\Dop, \mathcal{S})\] along the functor
  $\simp \to \CatI$ given by viewing the partially ordered sets $[n]$
  as ($\infty$-)categories, induces an equivalence
  \[ \CatI \isoto \CSeg_{\Dop}(\mathcal{S}).\]
\end{thm}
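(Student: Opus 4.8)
Write $j \colon \simp \to \CatI$ for the functor sending $[n]$ to the poset $\Delta^{n}$ regarded as an \icat{}, and $N \colon \CatI \to \Fun(\Dop, \mathcal{S})$ for the restricted Yoneda functor, so that $N(\mathcal{C})_{n} = \Map_{\CatI}(\Delta^{n}, \mathcal{C})$. For posets $[n],[m]$ the functor \icat{} $\Fun([n],[m])$ is again a poset, hence has discrete core, so $N \circ j$ is the ordinary Yoneda embedding; since $\CatI$ is cocomplete this means $N$ has a colimit-preserving left adjoint $L \colon \Fun(\Dop, \mathcal{S}) \to \CatI$ with $L y \simeq j$, given on objects by $L(F) \simeq \colim_{\simp_{/F}} \Delta^{\bullet}$.

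The first thing to check is that $N$ takes values in $\CSeg_{\Dop}(\mathcal{S})$. For the Segal condition one uses that the spine inclusion $\Delta^{1} \amalg_{\Delta^{0}} \cdots \amalg_{\Delta^{0}} \Delta^{1} \to \Delta^{n}$ is a categorical equivalence, so that $\Delta^{n}$ is this iterated pushout already in $\CatI$, and $\Map_{\CatI}(-, \mathcal{C})$ carries it to the Segal limit for $N(\mathcal{C})$. For completeness, the adjunction $L \dashv N$ together with the fullness of $\Seg_{\Dop}(\mathcal{S}) \subseteq \Fun(\Dop, \mathcal{S})$ gives $N(\mathcal{C})^{\txt{eq}} = \Map_{\Seg_{\Dop}(\mathcal{S})}(E^{1}, N\mathcal{C}) \simeq \Map_{\CatI}(L E^{1}, \mathcal{C})$, and $L E^{1} \simeq \Delta^{0}$ because $E^{1}$, the nerve of the generic isomorphism, is a contractible Kan complex; under this identification the completeness map $N(\mathcal{C})_{0} \to N(\mathcal{C})^{\txt{eq}}$ is induced by $L(E^{1} \to \Delta^{0})$, an equivalence, so $N\mathcal{C}$ is complete.

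The substance of the theorem is then that the resulting functor $\CatI \to \CSeg_{\Dop}(\mathcal{S})$ is an equivalence. I would deduce this from the single assertion that $L$ exhibits $\CatI$ as the Bousfield localization of $\Fun(\Dop, \mathcal{S})$ at the small set of maps consisting of the spine inclusions together with $E^{1} \to \Delta^{0}$. Granting this, $N$ --- as the right adjoint of a Bousfield localization --- is fully faithful, and its essential image is the subcategory of local objects, which are precisely the simplicial spaces that are Segal (local for the spine inclusions) and complete (in addition local for $E^{1} \to \Delta^{0}$); this subcategory is exactly $\CSeg_{\Dop}(\mathcal{S})$, and together with the previous paragraph this finishes the proof.

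That assertion is in turn equivalent to the combination of two facts: the density of $\simp$ in $\CatI$ (the counit $\colim_{\simp_{/\mathcal{C}}} \Delta^{\bullet} \to \mathcal{C}$ being an equivalence for every \icat{} $\mathcal{C}$), and the statement that $L$ inverts no maps beyond the strong saturation of the spine inclusions and $E^{1} \to \Delta^{0}$ (equivalently, that the unit $\id \to N L$ is an equivalence on complete Segal spaces). I expect this to be the main obstacle: it is precisely the content of the comparison between the various models for \icats{}, and there is no shortcut around it. A self-contained proof would pass through quasi-categories and invoke the Quillen equivalence of Joyal--Tierney~\cite{JoyalTierney} between Rezk's model structure for complete Segal spaces and Joyal's model structure for quasi-categories (see also Rezk~\cite{RezkCSS}); for present purposes it suffices to cite this classical result.
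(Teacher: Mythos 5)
The paper offers no proof of this statement at all: it is imported as an attributed external theorem, with the entire content deferred to the cited Joyal--Tierney comparison. Your sketch is consistent with that and adds a correct outline of the surrounding reductions --- identifying $N\circ j$ with the Yoneda embedding via discreteness of $\Fun([n],[m])^{\simeq}$, deducing the Segal condition from the spine decomposition of $\Delta^{n}$ as a colimit in $\CatI$, deducing completeness from $LE^{1}\simeq \Delta^{0}$, and packaging the remainder as the assertion that $L$ is the Bousfield localization at the spine inclusions together with $E^{1}\to\Delta^{0}$. You also correctly locate where the real work lies and, like the paper, cite Joyal--Tierney for it; there is no gap in what you wrote, only an honest deferral at the same point where the paper defers.
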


\subsection{Slices via Segal Conditions}\label{subsec:sliceseg}
In this subsection we prove that if $B$ is a Segal $\mathcal{O}$-space
then we can describe the overcategory
$\Seg_{\mathcal{O}}(\mathcal{S})_{/B}$ as the \icat{} of Segal
$\mathcal{B}$-spaces, where $\mathcal{B} \to \mathcal{O}$ is the left
fibration corresponding to $B$.

The starting point is the following observation:
\begin{propn}[\cite{freepres}*{Corollary 9.8}]\label{propn:lkanlfibeq}
  Let $\mathcal{B}$ be an \icat{} and let $\pi \colon \mathcal{E} \to
  \mathcal{B}$ be a left fibration. Then the
  functor
  \[\pi_{!} \colon \Fun(\mathcal{E}, \mathcal{S}) \to \Fun(\mathcal{B},
    \mathcal{S}),\]
  given by left Kan extension along $\pi$, induces an equivalence
  \begin{equation}
    \label{eq:lkaneqsl}
   \Fun(\mathcal{E}, \mathcal{S}) \isoto
    \Fun(\mathcal{B},\mathcal{S})_{/E}, 
  \end{equation}
  where the value of $\pi_{!}$ at the terminal object is the functor
  $E \colon \mathcal{B} \to \mathcal{S}$ corresponding to the left
  fibration $\pi$. \qed
\end{propn}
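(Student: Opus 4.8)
The plan is to deduce this from unstraightening together with the elementary stability properties of left fibrations. Recall that unstraightening gives an equivalence $\Cat_{\infty/\mathcal{B}}^{\txt{L}} \simeq \Fun(\mathcal{B}, \mathcal{S})$ under which $\pi \colon \mathcal{E} \to \mathcal{B}$ corresponds to $E$, that this equivalence is natural in the base, and that under it restriction along $\pi$ (\ie{} $\pi^{*} = (\blank)\circ\pi \colon \Fun(\mathcal{B},\mathcal{S}) \to \Fun(\mathcal{E},\mathcal{S})$) corresponds to pullback along $\pi$. Pullback along $\pi$ preserves left fibrations, and so does post-composition with $\pi$ (as left fibrations are closed under composition); the general fact that post-composition is left adjoint to pullback in slices restricts to an adjunction $\Sigma_{\pi} \dashv \pi^{*}$ between $\Cat_{\infty/\mathcal{E}}^{\txt{L}}$ and $\Cat_{\infty/\mathcal{B}}^{\txt{L}}$, where $\Sigma_{\pi}$ sends $(\mathcal{Y} \xto{q} \mathcal{E})$ to $(\mathcal{Y} \xto{q} \mathcal{E} \xto{\pi} \mathcal{B})$. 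By uniqueness of adjoints, $\Sigma_{\pi}$ corresponds under unstraightening to the left adjoint of $\pi^{*}$ on functor \icats{}, namely $\pi_{!}$. In particular $\pi_{!}$ carries the terminal object (the constant functor at a point, corresponding to $\id_{\mathcal{E}}$) to the object corresponding to $\pi$, \ie{} to $E$, and the induced factorization $\Fun(\mathcal{E},\mathcal{S}) \to \Fun(\mathcal{B},\mathcal{S})_{/E}$ corresponds to the functor $\Cat_{\infty/\mathcal{E}}^{\txt{L}} \to (\Cat_{\infty/\mathcal{B}}^{\txt{L}})_{/\pi}$ given by $(\mathcal{Y} \to \mathcal{E}) \mapsto (\Sigma_{\pi}\mathcal{Y}, q)$.

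It therefore remains to show this last functor is an equivalence. It is well-defined (again because left fibrations compose), and the key point is that it is essentially surjective and fully faithful. For essential surjectivity, an object of $(\Cat_{\infty/\mathcal{B}}^{\txt{L}})_{/\pi}$ is a left fibration $p \colon \mathcal{X} \to \mathcal{B}$ together with a map $f \colon \mathcal{X} \to \mathcal{E}$ over $\mathcal{B}$ (so $\pi f \simeq p$); by the cancellation property for left fibrations---if $g$ and $gf$ are left fibrations then so is $f$, equivalently any map of left fibrations over a fixed base is a left fibration on total spaces---the map $f$ is itself a left fibration, giving an object of $\Cat_{\infty/\mathcal{E}}^{\txt{L}}$ mapping to $(p, f)$. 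Full faithfulness is then a direct unwinding of the mapping spaces of the slice $(\Cat_{\infty/\mathcal{B}}^{\txt{L}})_{/\pi}$ in terms of those of $\Cat_{\infty/\mathcal{B}}^{\txt{L}}$ and $\Cat_{\infty/\mathcal{E}}^{\txt{L}}$: a map over $\mathcal{B}$ from $p$ to $\pi$ compatible with the structure maps is the same datum as a map over $\mathcal{E}$. Composing $\Fun(\mathcal{E},\mathcal{S}) \simeq \Cat_{\infty/\mathcal{E}}^{\txt{L}} \simeq (\Cat_{\infty/\mathcal{B}}^{\txt{L}})_{/\pi} \simeq \Fun(\mathcal{B},\mathcal{S})_{/E}$ yields the claimed equivalence, visibly implemented by $\pi_{!}$.

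The only real obstacle is bookkeeping: one must take care that the identification of $\pi_{!}$ with $\Sigma_{\pi}$ goes through naturality of unstraightening plus uniqueness of adjoints rather than pointwise Kan extension formulas, and one must invoke the composition- and pullback-stability and the cancellation property of left fibrations (all standard, \cf{} \cite{HTT}*{\S 2.1}). A cosmetically different but equivalent route avoids left fibrations entirely: writing $\mathcal{P}(\mathcal{A}) := \Fun(\mathcal{A}^{\op},\mathcal{S})$, we have $\Fun(\mathcal{B},\mathcal{S}) = \mathcal{P}(\mathcal{B}^{\op})$, and the standard description of slices of presheaf \icats{} \cite{HTT}*{Corollary 5.1.6.12} gives $\Fun(\mathcal{B},\mathcal{S})_{/E} = \mathcal{P}(\mathcal{B}^{\op})_{/E} \simeq \mathcal{P}\bigl((\mathcal{B}^{\op})_{/E}\bigr)$; since the category of elements $(\mathcal{B}^{\op})_{/E}$ is the unstraightening of $E$, which is $\mathcal{E}^{\op}$, the right-hand side is $\mathcal{P}(\mathcal{E}^{\op}) = \Fun(\mathcal{E},\mathcal{S})$, and the forgetful functor $(\mathcal{B}^{\op})_{/E} \to \mathcal{B}^{\op}$ being $\pi^{\op}$, the equivalence is implemented by left Kan extension along $\pi$.
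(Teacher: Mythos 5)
Your argument is correct, and it is essentially the route the paper itself indicates: the paper cites \cite{freepres}*{Corollary 9.8} for this statement, and the remark immediately following it sketches exactly your main point --- that under straightening/unstraightening the equivalence reduces to the fact that for a triangle over $\mathcal{B}$ with $\pi$ a left fibration, $f$ is a left fibration \IFF{} $\pi f$ is. Your write-up just fills in the bookkeeping (identifying $\pi_{!}$ with postcomposition via uniqueness of adjoints, and the slice-of-a-slice identification), so no substantive divergence from the paper's approach.
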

\begin{remark}
  Under the straightening equivalence between
  $\Fun(\mathcal{B}, \mathcal{S})$ and
  $\Cat_{\infty/\mathcal{B}}^{\txt{L}}$, the functor $\pi_{!}$ is
  given by composition with $\pi$, and the equivalence
  \cref{eq:lkaneqsl} boils down to the observation that if we have a
  commutative triangle
  \[
    \begin{tikzcd}
      \mathcal{X} \arrow{rr}{f} \arrow{dr} & & \mathcal{E} \arrow{dl}{\pi} \\
      & \mathcal{B}
    \end{tikzcd}
  \]
  then $f$ is a left fibration \IFF{} $\pi f$ is a left fibration.
\end{remark}

\begin{defn}\label{def:lfibpatt}
  Let $\mathcal{O}$ be an algebraic pattern, and suppose
  $\pi \colon \mathcal{B} \to \mathcal{O}$ is a left fibration. Then
  $\mathcal{B}$ inherits a factorization system where the inert and
  active morphisms are simply those that lie over inert and active
  morphisms in $\mathcal{O}$. If the functor
  $B \colon \mathcal{O} \to \mathcal{S}$ corresponding to $\pi$ is a
  Segal $\mathcal{O}$-space, we view $\mathcal{B}$ as an algebraic
  pattern via this factorization system and with all objects that lie
  over elementary objects in $\mathcal{O}$ as its elementary objects.
\end{defn}

\begin{remark}\label{rmk:lfibsegcond}
  Suppose $\mathcal{O}$ is an algebraic pattern and $\mathcal{B} \to
  \mathcal{O}$ is the left fibration corresponding to a Segal
  $\mathcal{O}$-space. Then for every object $\overline{X}$ in
  $\mathcal{B}$ lying over $X$ in $\mathcal{O}$, the functor $\pi$
  induces an equivalence
  \[ \mathcal{B}_{\overline{X}/}^{\el} \isoto \mathcal{O}^{\el}_{X/}, \]
  since there is a unique (cocartesian) morphism over every (inert)
  morphism $X \to E$ in $\mathcal{O}$. This means that
  a functor $F \colon
  \mathcal{B} \to \mathcal{S}$ is a Segal
  $\mathcal{B}$-space \IFF{} for every $\overline{X} \in \mathcal{B}$ lying
  over $X \in \mathcal{O}$ the natural map
  \[ F(\overline{X}) \to \lim_{E \in \mathcal{O}^{\el}_{X/}}
    F(\overline{E}) \]
  is an equivalence, where $\overline{X} \to \overline{E}$ is the
  cocartesian morphism lying over $X \to E$.
\end{remark}

\begin{propn}\label{propn:Segsliceeq}
  Let $\mathcal{O}$ be an algebraic pattern, and suppose $\pi \colon
  \mathcal{B} \to \mathcal{O}$ is a left fibration corresponding to a
  Segal $\mathcal{O}$-space $B$. Then the equivalence
  \cref{eq:lkaneqsl} from \cref{propn:lkanlfibeq} restricts to 
  an equivalence
  \[ \pi_{!} \colon \Seg_{\mathcal{B}}(\mathcal{S}) \isoto
    \Seg_{\mathcal{O}}(\mathcal{S})_{/B}.\]
\end{propn}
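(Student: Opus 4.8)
The plan is to show that under the equivalence $\pi_! \colon \Fun(\mathcal{B}, \mathcal{S}) \isoto \Fun(\mathcal{O}, \mathcal{S})_{/B}$ of \cref{propn:lkanlfibeq}, a functor $F \colon \mathcal{B} \to \mathcal{S}$ is a Segal $\mathcal{B}$-space if and only if the corresponding object $\pi_! F \to B$ of $\Fun(\mathcal{O}, \mathcal{S})_{/B}$ lands in $\Seg_{\mathcal{O}}(\mathcal{S})_{/B}$, i.e.\ has Segal total space. Since both sides are full subcategories of the respective functor categories and $\pi_!$ is already an equivalence, it suffices to match the two full subcategories objectwise. I would work through the straightening picture described in the remark after \cref{propn:lkanlfibeq}: a left fibration $\mathcal{X} \to \mathcal{B}$ over $B$ corresponds to a left fibration $\mathcal{X} \to \mathcal{B} \to \mathcal{O}$, and the claim becomes that $\mathcal{X} \to \mathcal{O}$ is a Segal $\mathcal{O}$-space precisely when $\mathcal{X} \to \mathcal{B}$ is a Segal $\mathcal{B}$-space.

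First I would unwind the Segal condition for $\pi_! F$ over $\mathcal{O}$. For $X \in \mathcal{O}$ with fibre $(\pi_! F)(X)$, the Segal condition requires $(\pi_! F)(X) \isoto \lim_{E \in \mathcal{O}^{\el}_{X/}} (\pi_! F)(E)$. Because $\pi$ is a left fibration corresponding to the Segal $\mathcal{O}$-space $B$, the fibre of $\pi_! F$ over $X$ decomposes as $\coprod_{\overline{X} \in \mathcal{B}_X} F(\overline{X})$ where $\mathcal{B}_X := \mathcal{B} \times_{\mathcal{O}} \{X\}$, and likewise over each elementary $E$. The key input is \cref{rmk:lfibsegcond}: the map $\mathcal{B}^{\el}_{\overline{X}/} \to \mathcal{O}^{\el}_{X/}$ is an equivalence, so that for a fixed lift $\overline{X}$ the cocartesian transports $\overline{X} \to \overline{E}$ give a canonical cone, and $B$ being Segal means $\mathcal{B}_X \isoto \lim_{E} \mathcal{B}_E$ as spaces. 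I would then check that, after these identifications, the $\mathcal{O}$-Segal map for $\pi_! F$ at $X$ is (on the summand indexed by $\overline{X}$) exactly the $\mathcal{B}$-Segal map for $F$ at $\overline{X}$, namely $F(\overline{X}) \to \lim_{E \in \mathcal{O}^{\el}_{X/}} F(\overline{E})$. The one slightly delicate point is that the limit over $\mathcal{O}^{\el}_{X/}$ of a coproduct indexed by a limit of spaces breaks up correctly as a coproduct of limits — this uses that $\mathcal{O}^{\el}_{X/}$ is the relevant indexing shape and that coproducts are computed fibrewise; the completeness/Segal structure of $B$ is what makes the index sets compatible. Granting this, the two conditions are visibly equivalent termwise, hence $\pi_!$ restricts to the asserted equivalence.

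The step I expect to be the main obstacle is precisely this bookkeeping: carefully identifying the natural Segal comparison map for $\pi_! F$ with the fibrewise (over $\mathcal{B}_X$) Segal comparison maps for $F$, and verifying that the limit $\lim_{E \in \mathcal{O}^{\el}_{X/}} (\pi_! F)(E)$ really does decompose as $\coprod_{\overline{X}} \lim_{E} F(\overline{E})$ along the decomposition of $\mathcal{B}_X$ as $\lim_E \mathcal{B}_E$. This is where one needs the left-fibration hypothesis (to get the coproduct decomposition of fibres of $\pi_! F$ and the unique cocartesian lifts of inert maps) together with the Segal hypothesis on $B$ (to control the index spaces), and it is essentially a universal-property computation rather than anything deep; I would phrase it using the equivalence $\mathcal{B}^{\el}_{\overline{X}/} \isoto \mathcal{O}^{\el}_{X/}$ of \cref{rmk:lfibsegcond} so that the limits are literally over the same diagram shape indexed over $\mathcal{O}^{\el}_{X/}$, which makes the matching transparent. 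Once the comparison maps are identified on the nose, the conclusion that $\pi_!$ carries $\Seg_{\mathcal{B}}(\mathcal{S})$ onto $\Seg_{\mathcal{O}}(\mathcal{S})_{/B}$ — and is an equivalence onto it, being a restriction of an equivalence to matched full subcategories — is immediate.
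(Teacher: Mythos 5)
Your proposal follows essentially the same route as the paper: both reduce the claim to checking the Segal comparison map for $\pi_{!}F$ at $X$ fibrewise over $B(X)$, using that $B(X) \isoto \lim_{E} B(E)$ (the Segal condition for $B$) together with the fact that fibres of limits are limits of fibres, and then identifying the resulting map on fibres over $p \in B(X)$ with the $\mathcal{B}$-Segal map $F(p) \to \lim_{E} F(p_{E})$ via the equivalence $\mathcal{B}^{\el}_{\overline{X}/} \isoto \mathcal{O}^{\el}_{X/}$ of \cref{rmk:lfibsegcond}. The only cosmetic difference is that the paper packages your ``coproduct of limits'' bookkeeping more cleanly, as the observation that the comparison square over the equivalence $B(X) \to \lim_{E}B(E)$ is a pullback \IFF{} its top map is an equivalence, which sidesteps any literal coproduct decomposition of the fibres.
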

\begin{proof}
  Since we know from \cref{propn:lkanlfibeq} that $\pi_{!}$ gives an
  equivalence \[\Fun(\mathcal{B},\mathcal{S}) \isoto \Fun(\mathcal{O},
  \mathcal{S})_{/B},\] it suffices to show that the full subcategories
  of Segal objects are identified under this equivalence.
  For $F \colon \mathcal{B} \to \mathcal{S}$ and $X \in \mathcal{O}$,
  we have a commutative square
  \[
    \begin{tikzcd}
      \pi_{!}F(X) \arrow{r} \arrow{d} & \lim_{E \in
        \mathcal{O}^{\el}_{X/}} \pi_{!}F(E) \arrow{d} \\
      B(X) \arrow{r}{\sim} & \lim_{E \in \mathcal{O}^{\el}_{X/} } B(E).
    \end{tikzcd}
  \]
  The functor $\pi_{!}F$ is a Segal $\mathcal{O}$-space \IFF{}
  the top horizontal morphism is an equivalence in every such
  square. Since $B$ is a Segal $\mathcal{O}$-space, we know that the
  bottom horizontal morphism is an equivalence, and hence this
  condition is equivalent to all these squares being 
  pullbacks. This
  in turn is equivalent to the map on fibres over every point of
  $B(X)$ being an equivalence for every $X \in \mathcal{O}$.

  Since limits commute, we can
  identify the map on fibres over $p \in B(X)$ as
  \begin{equation}
    \label{eq:fibreseg}
   \pi_{!}F(X)_{p} \to \lim_{E \in
     \mathcal{O}^{\el}_{X/}}\pi_{!}F(E)_{p_{E}},   
 \end{equation}
 where $p_{E}$ is the image of $p$ in $B(E)$ under the map
 corresponding to $X \to E$ in $\mathcal{O}$.
 
  Since the functor $\pi$ is a left fibration, the left Kan extension
  $\pi_{!}$ is computed fibrewise, \ie{}
  \[ \pi_{!}F(X) \simeq \colim_{p \in B(X)} F(p).\]
  For a space $T$, the straightening equivalence $\Fun(T, \mathcal{S})
  \isoto \mathcal{S}_{/T}$ is given by taking colimits, with inverse
  given by taking fibres. Hence we have a natural identification of
  $\pi_{!}F(X)_{p}$ with $F(p)$, under which the map
  \cref{eq:fibreseg} corresponds to the Segal map
  \[ F(p) \to \lim_{E \in \mathcal{O}^{\el}_{X/}} F(p_{E}).\]
  As we saw in \cref{rmk:lfibsegcond}, asking for this to be an
  equivalence for all $X \in \mathcal{O}$ and $p \in B(X)$ is
  precisely asking for $F$ to be a Segal $\mathcal{B}$-space.
\end{proof}

In the special case where $\mathcal{O}$ is $\Dop$, we can use
\cref{propn:Segsliceeq} to get a description of the overcategory
$\Cat_{\infty/\mathcal{C}}$ in terms of complete Segal conditions;
this description can also be found in \cite{AyalaFrancisFib} and
\cite{HinichYoneda}.

\begin{remark}
  Let $\pi \colon \mathcal{B} \to \Dop$ be a left fibration
  corresponding to a Segal space $B$. An object $b \in \mathcal{B}$
  over $[n] \in \Dop$ corresponds to a morphism between left
  fibrations $i_{b} \colon \Dop_{/[n]} \to \mathcal{B}$ over $\Dop$.
  For $b \in \mathcal{B}_{0}$, composition with $i_{b}$ then
  restricts to a functor
  \[i_{b}^{*} \colon \Seg_{\mathcal{B}}(\mathcal{S}) \to
    \Seg_{\Dop}(\mathcal{S}),\]
  since through the equivalence of
  \cref{propn:Segsliceeq} the functor $i_{b}^{*}$ corresponds to
  base change along $i_{b}$, which preserves the Segal condition since
  limits commute. 
\end{remark}

\begin{defn}\label{defn:lfibcompl}
  Let $\pi \colon \mathcal{B} \to \Dop$ be a left fibration
  corresponding to a Segal space $B$.
  We say a Segal $\mathcal{B}$-space $F$ is
  \emph{complete} if the Segal spaces $i_{b}^{*}F$ are complete for
  all $b \in \mathcal{B}_{0}$, or equivalently the fibres $(\pi_{!}F)_{b}$
  are all complete. We write $\CSeg_{\mathcal{B}}(\mathcal{S})
  \subseteq \Seg_{\mathcal{B}}(\mathcal{S})$ for
  the full subcategory of complete Segal $\mathcal{B}$-spaces.
\end{defn}

\begin{propn}\label{propn:lfibcompl}
  Let $\pi \colon \mathcal{B} \to \Dop$ be a left fibration
  corresponding to a simplicial space $B$. If $B$ is a complete Segal
  space, then the functor $\pi_{!}$ restricts to an equivalence
  \[ \CSeg_{\mathcal{B}}(\mathcal{S}) \isoto \CSeg_{\Dop}(\mathcal{S})_{/B}.\]
\end{propn}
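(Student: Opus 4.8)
The plan is to deduce this from \cref{propn:Segsliceeq}, applied with $\mathcal{O} = \Dop$ (using that a complete Segal space is in particular a Segal space), by checking that the equivalence $\pi_{!} \colon \Seg_{\mathcal{B}}(\mathcal{S}) \isoto \Seg_{\Dop}(\mathcal{S})_{/B}$ carries the two completeness conditions to one another. Since $\CSeg_{\mathcal{B}}(\mathcal{S})$ is by definition a full subcategory of $\Seg_{\mathcal{B}}(\mathcal{S})$, and $\CSeg_{\Dop}(\mathcal{S})_{/B}$ is a full subcategory of $\Seg_{\Dop}(\mathcal{S})_{/B}$ (as $B$ itself lies in $\CSeg_{\Dop}(\mathcal{S})$), it suffices to show: a Segal $\mathcal{B}$-space $F$ is complete in the sense of \cref{defn:lfibcompl} \IFF{} the underlying simplicial space of $\pi_{!}F$ is a complete Segal space. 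By the reformulation recorded in \cref{defn:lfibcompl}, $F$ is complete precisely when, for every point $b \in B_{0}$, the fibre $(\pi_{!}F)_{b} = \pi_{!}F \times_{B} \ast$ is a complete Segal space; here we use that a point of $B_{0}$ is the same as a map $\ast \to B$ of simplicial spaces, since $[0]$ is terminal in $\simp$. Thus everything reduces to the following statement about $B$ alone: if $p \colon Y \to B$ is a map of Segal spaces with $B$ complete, then $Y$ is complete \IFF{} every fibre $Y_{b} := Y \times_{B} \ast$ ($b \in B_{0}$) is complete.

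To prove this reduced statement I would form the commutative square with top row $Y_{0} \to Y^{\txt{eq}}$, bottom row $B_{0} \to B^{\txt{eq}}$ (the canonical maps, given by composition with $E^{1} \to \Delta^{0}$), and vertical maps induced by $p$. As $B$ is complete, the bottom map is an equivalence, so $Y$ is complete --- i.e.\ the top map is an equivalence --- \IFF{} the square is a pullback, \IFF{} the induced map on fibres over each $b \in B_{0}$ is an equivalence. Now $\Map_{\Seg_{\Dop}(\mathcal{S})}(E^{1}, \blank)$ preserves limits (being corepresented) and Segal spaces are closed under limits in $\Fun(\Dop,\mathcal{S})$, so $Y_{b} = Y \times_{B} \ast$ is again a Segal space, the fibre of $Y^{\txt{eq}} \to B^{\txt{eq}}$ over the image of $b$ is $\Map(E^{1}, Y \times_{B} \ast) = (Y_{b})^{\txt{eq}}$, and the fibre of $Y_{0} \to B_{0}$ over $b$ is $(Y \times_{B} \ast)_{0} = (Y_{b})_{0}$. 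One checks that under $B_{0} \simeq B^{\txt{eq}}$ the point $b$ goes to exactly the basepoint used to form $Y_{b}$, so the two fibres are taken at the same point; under these identifications the map on fibres over $b$ is precisely the completeness comparison map $(Y_{b})_{0} \to (Y_{b})^{\txt{eq}}$, which is an equivalence iff $Y_{b}$ is complete. This establishes the reduced statement, and hence the proposition.

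I expect the only real work to be the bookkeeping in the last two steps: identifying $(\pi_{!}F)_{b}$ with $i_{b}^{*}F$ so that \cref{defn:lfibcompl} really does say what is claimed (this identification is essentially the parenthetical remark in that definition, so it may be invoked), and keeping track of the two descriptions of a point of $B$ --- as an element of $B_{0}$ and as an element of $B^{\txt{eq}} = \Map(E^{1}, B)$ --- so that ``fibre over $b$'' is unambiguous in both rows of the square. Once the square is set up correctly the argument is just the pullback-pasting/two-out-of-three criterion together with the fact that $\Map(E^{1}, \blank)$ commutes with the relevant fibre product; note that both implications fall out simultaneously, so no separate appeal to closure of complete Segal spaces under limits is needed.
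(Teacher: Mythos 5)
Your proposal is correct and follows essentially the same route as the paper: both form the square comparing $Y_{0} \to Y^{\txt{eq}}$ with $B_{0} \to B^{\txt{eq}}$, use completeness of $B$ to reduce to the map on fibres, and identify the fibre of $Y^{\txt{eq}} \to B^{\txt{eq}}$ with $(Y_{b})^{\txt{eq}}$ via the fact that $\Map(E^{1},\blank)$ preserves limits. The extra bookkeeping you flag (matching the point of $B_{0}$ with the corresponding point of $B^{\txt{eq}}$) is handled implicitly in the paper and is indeed routine.
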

\begin{proof}
  Suppose $X$ is a Segal space over $B$. We then have a commutative
  square
  \[
    \begin{tikzcd}
      X_{0} \arrow{r} \arrow{d} & X^{\txt{eq}} \arrow{d} \\
      B_{0} \arrow{r}{\sim} & B^{\txt{eq}},
    \end{tikzcd}
  \]
  where the bottom horizontal morphism is an equivalence since $B$ is
  complete. The Segal space $X$ is therefore complete \IFF{} this
  square of spaces is a pullback, which is equivalent to the map on
  fibres over each $b \in B_{0}$ being an equivalence. Thus $X$ is
  complete \IFF{} for each $b \in B_{0}$ the map on fibres $X_{b,0} \to
  (X^{\txt{eq}})_{b}$ is an equivalence. Since $\Map(E^{1},\blank)$
  preserves limits, we can also identify the fibre
  $(X^{\txt{eq}})_{b}$ with $(X_{b})^{\txt{eq}}$, so this condition
  says precisely that the Segal spaces
  $X_{b}$ are complete for all $b \in B_{0}$.
\end{proof}

Combining this observation with \cref{thm:CSSicat}, we get:
\begin{cor}
  For $\mathcal{C}$ an \icat{}, let
  \[ \simp_{/\mathcal{C}} := \simp \times_{\CatI}
    \Cat_{\infty/\mathcal{C}} \to \simp\] be the right fibration
  corresponding to $\mathcal{C}$ viewed as a complete Segal
  space. Then the restricted Yoneda embedding along
  $\simp_{/\mathcal{C}} \to \Cat_{\infty/\mathcal{C}}$ induces an equivalence
\[ \Cat_{\infty/\mathcal{C}} \isoto
  \CSeg_{\Dop_{/\mathcal{C}}}(\mathcal{S}).\] 
\end{cor}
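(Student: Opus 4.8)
The plan is to deduce the statement by combining the Joyal--Tierney theorem (\cref{thm:CSSicat}) with \cref{propn:lfibcompl}. First I would write $B \colon \Dop \to \mathcal{S}$ for the complete Segal space $[n] \mapsto \Map_{\CatI}([n], \mathcal{C})$ corresponding to $\mathcal{C}$ under \cref{thm:CSSicat}, and set $\Dop_{/\mathcal{C}} := (\simp_{/\mathcal{C}})^{\op}$. Since right fibrations over $\simp$ and left fibrations over $\Dop$ both classify functors $\Dop \to \mathcal{S}$, the left fibration $\Dop_{/\mathcal{C}} \to \Dop$ is exactly the one corresponding to $B$ --- this is the precise meaning of the phrase ``the right fibration corresponding to $\mathcal{C}$ viewed as a complete Segal space''. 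Then \cref{propn:lfibcompl} applies and yields an equivalence $\pi_{!} \colon \CSeg_{\Dop_{/\mathcal{C}}}(\mathcal{S}) \isoto \CSeg_{\Dop}(\mathcal{S})_{/B}$. Since slicing is compatible with equivalences, \cref{thm:CSSicat} identifies the target with $\Cat_{\infty/\mathcal{C}}$, so composing gives the desired equivalence $\CSeg_{\Dop_{/\mathcal{C}}}(\mathcal{S}) \simeq \Cat_{\infty/\mathcal{C}}$.

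It then remains to check that the inverse of this composite is indeed the restricted Yoneda embedding along $\simp_{/\mathcal{C}} \to \Cat_{\infty/\mathcal{C}}$. For this I would compare the two functors fibrewise. Given $f \colon \mathcal{X} \to \mathcal{C}$ in $\Cat_{\infty/\mathcal{C}}$, the restricted Yoneda presheaf sends a simplex $\sigma \colon [n] \to \mathcal{C}$ to $\Map_{\Cat_{\infty/\mathcal{C}}}(\sigma, f)$, which is the fibre over $\sigma$ of the map $\Map_{\CatI}([n], \mathcal{X}) \to \Map_{\CatI}([n], \mathcal{C})$, that is, of the map of complete Segal spaces $X_{\bullet} \to B_{\bullet}$ that \cref{thm:CSSicat} associates to $f$. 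On the other hand, by the fibrewise formula established in the proof of \cref{propn:Segsliceeq} (namely $\pi_{!}G(Y)_{p} \simeq G(p)$), the inverse $\pi_{!}^{-1}$ sends $X_{\bullet} \to B_{\bullet}$ to the Segal $\Dop_{/\mathcal{C}}$-space whose value at $\sigma$ is precisely that same fibre. The two descriptions match, and I would upgrade this pointwise comparison to an equivalence of functors using the naturality already built into \cref{propn:Segsliceeq} and \cref{thm:CSSicat}.

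I expect the routine part to be the bookkeeping with opposite categories --- passing between the right fibration $\simp_{/\mathcal{C}} \to \simp$ and the left fibration $\Dop_{/\mathcal{C}} \to \Dop$ classifying the same space-valued functor. The one step that will require genuine care is the final identification: matching the abstractly-constructed equivalence with the restricted Yoneda embedding forces one to unwind both the fibrewise description of $\pi_{!}$ from the proof of \cref{propn:Segsliceeq} and the description of mapping spaces in an overcategory $\Cat_{\infty/\mathcal{C}}$, and then to keep track of naturality across these identifications.
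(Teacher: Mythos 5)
Your proposal is correct and follows essentially the same route as the paper, which derives the corollary immediately by combining \cref{propn:lfibcompl} with \cref{thm:CSSicat}. The extra work you do in the second paragraph --- unwinding the fibrewise formula for $\pi_{!}$ to identify the composite equivalence with the restricted Yoneda embedding --- is a detail the paper leaves implicit, and your account of it is accurate.
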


\subsection{Slices of Symmetric Monoidal $\infty$-Categories}\label{subsec:slsmc}
We now specialize the results of the previous section to describe the
overcategories of $\SMCI$ in terms of Segal and completeness
conditions. We first observe that $\SMCI$ itself admits such a
description:
\begin{defn}
  We view the product $\xF_{*} \times \Dop$ as an algebraic pattern
  with the inert and active morphisms given by those that are inert
  and active in each coordinate, and with $(\angled{1},[0])$ and
  $(\angled{1},[1])$ as the elementary objects. We say that a Segal $(\xF_{*}
  \times \Dop)$-space $F$ is \emph{complete} if the Segal space
  $F(\angled{1}, \blank)$ is complete. We write $\CSeg_{\xF_{*} \times
    \Dop}(\mathcal{S}) \subseteq \Seg_{\xF_{*} \times
    \Dop}(\mathcal{S})$ for the full subcategory of complete Segal
  $(\xF_{*} \times \Dop)$-spaces.
\end{defn}

\begin{propn}\label{SMCisFDSeg}
  The restricted Yoneda embedding along $\simp \to \CatI$ induces an
  equivalence
  \[ \SMCI \isoto \CSeg_{\xF_{*} \times \Dop}(\mathcal{S}).\]
\end{propn}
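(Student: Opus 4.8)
The plan is to deduce this from the Joyal--Tierney theorem (\cref{thm:CSSicat}) together with the fact that $\SMCI = \CMon(\CatI)$ is, by the identification of commutative monoids with Segal $\xF_{*}$-objects recalled above, precisely $\Seg_{\xF_{*}}(\CatI)$. Since $\CatI \isoto \CSeg_{\Dop}(\mathcal{S}) \subseteq \Fun(\Dop, \mathcal{S})$ is fully faithful, postcomposition gives a fully faithful functor
\[ \SMCI = \Seg_{\xF_{*}}(\CatI) \hookrightarrow \Fun\bigl(\xF_{*}, \Fun(\Dop, \mathcal{S})\bigr) \simeq \Fun(\xF_{*} \times \Dop, \mathcal{S}), \]
and unwinding \cref{thm:CSSicat} shows this composite is exactly the restricted Yoneda embedding along $\simp \to \CatI$. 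So the task is to identify its essential image with $\CSeg_{\xF_{*} \times \Dop}(\mathcal{S})$, which I would split into matching first the Segal conditions and then the completeness conditions.

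For the Segal conditions I would unwind the definition for the product pattern. The elementary objects of $\xF_{*} \times \Dop$ are $(\angled{1},[0])$ and $(\angled{1},[1])$, and for any $(\angled{n},[k])$ the \icat{} of inert maps to elementaries is the coproduct of $n$ copies of $(\Dop)^{\el}_{[k]/}$ (indexed by the inert maps $\rho_{i} \colon \angled{n} \to \angled{1}$), with the diagram whose limit is taken depending only on the $\Dop$-coordinate. Hence, writing $F \colon \xF_{*} \times \Dop \to \mathcal{S}$ and $\widetilde{F} \colon \xF_{*} \to \Fun(\Dop, \mathcal{S})$ for the associated curried functor, the Segal condition at $(\angled{n},[k])$ reads
\[ F(\angled{n},[k]) \isoto \prod_{i=1}^{n} \Bigl( \lim_{E \in (\Dop)^{\el}_{[k]/}} F(\angled{1}, E)\Bigr). \]
Taking $n = 1$ shows each $\widetilde{F}(\angled{n})$ must be a Segal $\Dop$-space, and the general case then shows $F$ is a Segal $(\xF_{*} \times \Dop)$-object exactly when $\widetilde{F}$ takes values in $\Seg_{\Dop}(\mathcal{S})$ and is itself a Segal $\xF_{*}$-object there, i.e.\ $\Seg_{\xF_{*}\times\Dop}(\mathcal{S}) \simeq \Seg_{\xF_{*}}(\Seg_{\Dop}(\mathcal{S}))$ as full subcategories of $\Fun(\xF_{*} \times \Dop, \mathcal{S})$. (This is a special case of the behaviour of Segal objects under products of algebraic patterns, cf.\ \cite{patterns1}, which one could also invoke directly.)

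For the completeness conditions: by definition $\CSeg_{\xF_{*}\times\Dop}(\mathcal{S})$ is cut out of $\Seg_{\xF_{*}\times\Dop}(\mathcal{S})$ by completeness of $F(\angled{1},-)$, while the essential image of $\SMCI$ consists of those $\widetilde{F}$ for which \emph{every} $\widetilde{F}(\angled{n})$ is a complete Segal space (each $\widetilde{F}(\angled{n})$ lying in the image of $\CatI \hookrightarrow \Fun(\Dop,\mathcal{S})$). These agree: for a Segal $\xF_{*}$-object $\widetilde{F}(\angled{n}) \simeq \widetilde{F}(\angled{1})^{\times n}$, and completeness of Segal spaces is stable under finite products since both $X \mapsto X_{0}$ and $X \mapsto X^{\txt{eq}} = \Map(E^{1}, X)$ preserve products; so completeness of $\widetilde{F}(\angled{1})$ forces that of all $\widetilde{F}(\angled{n})$, and the converse is trivial. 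Combining the two steps gives the claimed equivalence $\SMCI \isoto \CSeg_{\xF_{*}\times\Dop}(\mathcal{S})$, implemented by the restricted Yoneda embedding. I expect the main obstacle to be the Segal-condition bookkeeping of the previous paragraph --- in particular checking that the chosen elementary objects of $\xF_{*}\times\Dop$ yield precisely the coproduct decomposition of the elementary undercategories --- and this is exactly the point at which the product-of-patterns results of \cite{patterns1} save the most effort.
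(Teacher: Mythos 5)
Your proposal is correct and follows essentially the same route as the paper: identify $\Seg_{\xF_{*}\times\Dop}(\mathcal{S})$ with $\Seg_{\xF_{*}}(\Seg_{\Dop}(\mathcal{S}))$ via currying, observe that the completeness conditions match because complete Segal spaces are closed under (finite) products, and conclude by Joyal--Tierney. The paper simply compresses the bookkeeping you spell out into ``it follows immediately from the definitions.''
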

\begin{proof}
  It follows immediately from the definitions that the equivalence
  \[ \Fun(\xF_{*}, \Fun(\Dop, \mathcal{S})) \simeq \Fun(\xF_{*} \times
    \Dop, \mathcal{S}) \]
  restricts to an equivalence
  \[ \Seg_{\xF_{*}}(\Seg_{\Dop}(\mathcal{S})) \simeq \Seg_{\xF_{*}
      \times \Dop}(\mathcal{S}).\]
  Moreover, since complete Segal spaces are closed under limits, this
  restricts further to an equivalence
  \[\Seg_{\xF_{*}}(\CSeg_{\Dop}(\mathcal{S})) \simeq \CSeg_{\xF_{*}
      \times \Dop}(\mathcal{S}).\]
  In other words, complete Segal $(\xF_{*} \times \Dop)$-spaces are commutative
  monoids in $\CSeg_{\Dop}(\mathcal{S})$. Combining this with the
  equivalence of \cref{thm:CSSicat} now gives the result.
\end{proof}

\begin{remark}\label{rmk:smcseg}
  Let $\mathcal{M} \to \xF_{*} \times \Dop$ be a left fibration
  corresponding to a Segal $(\xF_{*} \times \Dop)$-space $M$. Then
  a functor $F \colon \mathcal{M} \to \mathcal{S}$ is a Segal
  $\mathcal{M}$-space \IFF{} for $X \in \mathcal{M}$ lying over $(\angled{k},[n])$
  in $\xF_{*} \times \Dop$ the natural map
  \[ F(X) \to \prod_{i=1}^{k} F(X_{i,01}) \times_{F(X_{i,1})} \cdots \times_{F(X_{i,n-1})}
    F(X_{i,(n-1)n}) \]
  induced by the (cocartesian) maps $X \to X_{i,j}$ over
  $\rho_{i}\times \iota_{jj}$
  and $X \to X_{i,(j-1)j}$ over $\rho_{i} \times \iota_{(j-1)j}$, is
  an equivalence. This condition can conveniently be split into three parts:
  \begin{enumerate}[(1)]
  \item $F(X) \isoto F(X_{01}) \times_{F(X_{1})} \cdots
    \times_{F(X_{n-1})} F(X_{(n-1)n})$ where $X$ lies over $[n] \in
    \Dop$ and the maps $X \to X_{j}$ and $X \to X_{(j-1)j}$ are
    cocartesian over $\iota_{jj}$ and $\iota_{(j-1)j}$, respectively.
  \item $F(X) \isoto \prod_{i=1}^{k} F(X_{i})$ where $X$ lies over
    $(\angled{k}, [1])$ and $X \to X_{i}$ is cocartesian over
    $\rho_{i}$,
   \item $F(X) \isoto \prod_{i=1}^{k} F(X_{i})$ where $X$ lies over
    $(\angled{k}, [0])$ and $X \to X_{i}$ is cocartesian over
    $\rho_{i}$.
  \end{enumerate}
\end{remark}

As a special case of \cref{propn:Segsliceeq} we have:
\begin{cor}\label{cor:SMsliceeq}
  Let $\pi \colon \mathcal{M} \to \xF_{*} \times \Dop$ be a left fibration
  corresponding to a Segal $(\xF_{*} \times \Dop)$-space $M$.
  Then the functor
  $\pi_{!}$ given by left Kan extension along $\pi$ restricts to an equivalence
  \[ \Seg_{\mathcal{M}}(\mathcal{S}) \isoto \Seg_{\xF_{*}\times \Dop}(\mathcal{S})_{/M}.\]
\end{cor}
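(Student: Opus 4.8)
The plan is to recognize this as a direct instance of \cref{propn:Segsliceeq}. First I would observe that $\xF_{*} \times \Dop$, equipped with the componentwise factorization system and with $(\angled{1},[0])$ and $(\angled{1},[1])$ as elementary objects, is an algebraic pattern in the sense required — this is the definition given immediately above the corollary. Since $M$ is assumed to be a Segal $(\xF_{*} \times \Dop)$-space, \cref{def:lfibpatt} applies to the left fibration $\pi \colon \mathcal{M} \to \xF_{*} \times \Dop$ and equips $\mathcal{M}$ with the structure of an algebraic pattern: the inert and active morphisms are those lying over inert and active morphisms of $\xF_{*} \times \Dop$, and the elementary objects are those lying over $(\angled{1},[0])$ and $(\angled{1},[1])$. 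Concretely, by \cref{rmk:smcseg} the resulting Segal condition on $\mathcal{M}$ unwinds to the fibrewise version of the three conditions listed there, but this explicit description is not actually needed for the argument.

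With this pattern structure in hand, I would apply \cref{propn:Segsliceeq} verbatim with $\mathcal{O} = \xF_{*} \times \Dop$, $\mathcal{B} = \mathcal{M}$ and $B = M$. That proposition asserts exactly that the equivalence $\Fun(\mathcal{M}, \mathcal{S}) \isoto \Fun(\xF_{*} \times \Dop, \mathcal{S})_{/M}$ of \cref{propn:lkanlfibeq}, given by left Kan extension $\pi_{!}$ along the left fibration $\pi$, restricts to an equivalence $\Seg_{\mathcal{M}}(\mathcal{S}) \isoto \Seg_{\xF_{*}\times \Dop}(\mathcal{S})_{/M}$. This is precisely the claimed statement, so the proof can be as short as ``This is a special case of \cref{propn:Segsliceeq}.''

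There is essentially no obstacle here beyond bookkeeping: the substantive content has already been isolated in \cref{propn:Segsliceeq}, and the only thing one must check is that its hypotheses hold in the present situation — namely that $\xF_{*} \times \Dop$ is an algebraic pattern and that $M$ is a Segal object for it, so that $\mathcal{M}$ inherits a pattern structure via \cref{def:lfibpatt}. Both are true by assumption, so no further work is required; if one wishes to be slightly more explicit, one simply records the identification of the Segal condition on $\mathcal{M}$ provided by \cref{rmk:smcseg}.
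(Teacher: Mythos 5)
Your proposal is correct and matches the paper exactly: the corollary is stated there with the preamble ``As a special case of \cref{propn:Segsliceeq} we have:'' and no further proof, so the entire content is precisely the instantiation $\mathcal{O} = \xF_{*}\times\Dop$, $\mathcal{B} = \mathcal{M}$, $B = M$ that you describe. Your extra care in checking that the hypotheses of \cref{propn:Segsliceeq} hold (the pattern structure on $\xF_{*}\times\Dop$ and the inherited structure on $\mathcal{M}$ via \cref{def:lfibpatt}) is exactly the right bookkeeping.
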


We now want to incorporate completeness into this description:

\begin{defn}
    Let $\pi \colon \mathcal{M} \to \xF_{*} \times \Dop$ be a left fibration
  corresponding to a Segal $(\xF_{*} \times \Dop)$-space $M$, and let
  $\mathcal{M}_{\angled{1}} \to \Dop$ be the fibre at
  $\angled{1} \in \xF_{*}$, corresponding to the underlying Segal
  space $M(\angled{1},\blank)$. Let $u_{M} \colon
  \mathcal{M}_{\angled{1}} \to \mathcal{M}$ denote the inclusion of
  this fibre; composition with $u_{M}$ restricts to a functor
  $\Seg_{\mathcal{M}}(\mathcal{S}) \to
  \Seg_{\mathcal{M}_{\angled{1}}}(\mathcal{S})$. We say a Segal
  $\mathcal{M}$-space $F$ is \emph{complete} if $u_{M}^{*}F$ is
  complete in the sense of \cref{defn:lfibcompl}, and write
  $\CSeg_{\mathcal{M}}(\mathcal{S}) \subseteq
  \Seg_{\mathcal{M}}(\mathcal{S})$ for the full subcategory spanned by
  the complete objects.
\end{defn}

\begin{propn}\label{propn:csegsymmon}
  Let $\pi \colon \mathcal{M} \to \xF_{*} \times \Dop$ be a left fibration
  corresponding to a Segal $(\xF_{*} \times \Dop)$-space $M$.
  If $M$ is complete, then the functor $\pi_{!}$ restricts to an equivalence
  \[ \CSeg_{\mathcal{M}}(\mathcal{S}) \isoto \CSeg_{\xF_{*} \times
      \Dop}(\mathcal{S})_{/M}.\]
\end{propn}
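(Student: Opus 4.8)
The plan is to promote the equivalence
$\pi_{!} \colon \Seg_{\mathcal{M}}(\mathcal{S}) \isoto \Seg_{\xF_{*} \times \Dop}(\mathcal{S})_{/M}$
of \cref{cor:SMsliceeq} to the subcategories of complete objects. Since $M$ is complete, $\CSeg_{\xF_{*} \times \Dop}(\mathcal{S})_{/M}$ is nothing but the full subcategory of $\Seg_{\xF_{*} \times \Dop}(\mathcal{S})_{/M}$ spanned by those $G \to M$ with $G$ a complete Segal $(\xF_{*} \times \Dop)$-space, and likewise $\CSeg_{\mathcal{M}}(\mathcal{S})$ is full in $\Seg_{\mathcal{M}}(\mathcal{S})$ by definition. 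So it is enough to show that, for $F \in \Seg_{\mathcal{M}}(\mathcal{S})$, the object $F$ is complete \IFF{} $\pi_{!}F$ is a complete Segal $(\xF_{*} \times \Dop)$-space; the equivalence then restricts.

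Write $j \colon \Dop \to \xF_{*} \times \Dop$ for the functor $[n] \mapsto (\angled{1},[n])$, so that there is a pullback square
\[
  \begin{tikzcd}
    \mathcal{M}_{\angled{1}} \arrow{r}{u_{M}} \arrow{d}{\pi_{\angled{1}}} & \mathcal{M} \arrow{d}{\pi} \\
    \Dop \arrow{r}{j} & \xF_{*} \times \Dop,
  \end{tikzcd}
\]
which exhibits $\pi_{\angled{1}} \colon \mathcal{M}_{\angled{1}} \to \Dop$ as the left fibration classified by the underlying Segal space $M(\angled{1},\blank)$ --- a complete Segal space precisely because $M$ is complete. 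First I would record the base-change identity $j^{*}\pi_{!}F \simeq (\pi_{\angled{1}})_{!}\,u_{M}^{*}F$: because $\pi$ is a left fibration, $\pi_{!}$ is computed fibrewise, $\pi_{!}F(\angled{k},[n]) \simeq \colim_{p \in M(\angled{k},[n])} F(p)$, and restriction along $j$ just reads off the values over $\angled{1}$, where $\colim_{p \in M(\angled{1},[n])} F(p) \simeq \colim_{p \in M(\angled{1},[n])} (u_{M}^{*}F)(p) \simeq (\pi_{\angled{1}})_{!}(u_{M}^{*}F)([n])$.

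Then I would string together the relevant characterizations. By definition $F$ is complete \IFF{} $u_{M}^{*}F$ is a complete Segal $\mathcal{M}_{\angled{1}}$-space in the sense of \cref{defn:lfibcompl}. Applying \cref{propn:lfibcompl} to the complete Segal space $M(\angled{1},\blank)$ and its classifying left fibration $\pi_{\angled{1}}$, this holds \IFF{} $(\pi_{\angled{1}})_{!}(u_{M}^{*}F)$ lies in $\CSeg_{\Dop}(\mathcal{S})_{/M(\angled{1},\blank)}$, hence (again since $M(\angled{1},\blank)$ is complete) \IFF{} $(\pi_{\angled{1}})_{!}(u_{M}^{*}F)$ is a complete Segal space. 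By the base-change identity this Segal space is $\pi_{!}F(\angled{1},\blank)$, and by definition $\pi_{!}F$ is a complete Segal $(\xF_{*} \times \Dop)$-space exactly when its underlying Segal space $\pi_{!}F(\angled{1},\blank)$ is complete. Chaining these equivalences gives $F$ complete \IFF{} $\pi_{!}F$ complete, as needed.

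The one step deserving genuine care is the base-change identity $j^{*}\pi_{!} \simeq (\pi_{\angled{1}})_{!}\,u_{M}^{*}$, but since $\pi$ is a left fibration it is immediate from the fibrewise formula for left Kan extension along $\pi$, so I do not anticipate a real obstacle; alternatively one can invoke base change for left Kan extension along left fibrations in a pullback square.
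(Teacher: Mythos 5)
Your argument is correct and is essentially the paper's own proof: the base-change identity $j^{*}\pi_{!} \simeq (\pi_{\angled{1}})_{!}u_{M}^{*}$ (justified, as you do, by the fibrewise formula for left Kan extension along a left fibration) is exactly the commutative square the paper writes down, and the rest is the same appeal to \cref{propn:lfibcompl} together with the observation that both complete subcategories are preimages under the horizontal functors.
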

\begin{proof}
  Since left Kan extensions along the left fibration $\pi$ are given
  by taking colimits fibrewise, we have a commutative square
  \[
    \begin{tikzcd}
      \Seg_{\mathcal{M}}(\mathcal{S}) \arrow{d}{\pi_{!}}
      \arrow{r}{u_{M}^{*}} &
      \Seg_{\mathcal{M}_{\angled{1}}}(\mathcal{S})
      \arrow{d}{\pi_{\angled{1},!}} \\
      \Seg_{\xF_{*} \times \Dop}(\mathcal{S})_{/M} \arrow{r} &
      \Seg_{\Dop}(\mathcal{S})_{/M(\angled{1},\blank)},
    \end{tikzcd}
  \]
  where the vertical maps are equivalences.
  Combining this observation with \cref{propn:lfibcompl} now completes
  the proof, since $\CSeg_{\mathcal{M}}(\mathcal{S})$ and $\CSeg_{\xF_{*} \times
      \Dop}(\mathcal{S})_{/M}$ are defined as the preimages in this
    diagram of
    $\CSeg_{\mathcal{M}_{\angled{1}}}(\mathcal{S})$ and
    $\CSeg_{\Dop}(\mathcal{S})_{/M(\angled{1},\blank)}$, respectively.
\end{proof}

\begin{cor}\label{cor:SMCslCSeg}
  Suppose $\mathcal{C}^{\otimes}$ is a symmetric monoidal \icat{}, and
  let $\mathcal{M} \to \xF_{*}\times \Dop$ be the left fibration
  corresponding to $\mathcal{C}^{\otimes}$ viewed as a commutative
  monoid in complete Segal spaces. Then there is an equivalence
  of \icats{}
  \[ \SMC_{\infty/\mathcal{C}^{\otimes}} \simeq \CSeg_{\mathcal{M}}(\mathcal{S}).\]
\end{cor}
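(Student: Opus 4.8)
The plan is to obtain the corollary by combining the self-description of $\SMCI$ furnished by \cref{SMCisFDSeg} with the slice description furnished by \cref{propn:csegsymmon}. Both are already proved, so no genuinely new argument is needed; the task is just to splice the two equivalences together and check that the $\mathcal{M}$ named in the statement is the one to which \cref{propn:csegsymmon} applies.

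First I would invoke \cref{SMCisFDSeg}. Since it provides an \emph{equivalence} of \icats{}
\[ \SMCI \isoto \CSeg_{\xF_{*} \times \Dop}(\mathcal{S}), \]
it automatically induces an equivalence on overcategories above any chosen object. Let $M \in \CSeg_{\xF_{*} \times \Dop}(\mathcal{S})$ denote the image of $\mathcal{C}^{\otimes}$ under this equivalence; unwinding the proof of \cref{SMCisFDSeg}, $M$ is exactly $\mathcal{C}^{\otimes}$ regarded first, via \cref{thm:CSSicat}, as a commutative monoid in $\CSeg_{\Dop}(\mathcal{S})$, and then, via the identification $\Seg_{\xF_{*}}(\CSeg_{\Dop}(\mathcal{S})) \simeq \CSeg_{\xF_{*} \times \Dop}(\mathcal{S})$, as a functor $\xF_{*}\times\Dop \to \mathcal{S}$ — that is, $M$ is precisely ``$\mathcal{C}^{\otimes}$ viewed as a commutative monoid in complete Segal spaces''. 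Hence we get an equivalence
\[ \SMC_{\infty/\mathcal{C}^{\otimes}} \simeq \CSeg_{\xF_{*}\times \Dop}(\mathcal{S})_{/M}. \]

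Next I would note that $M$ is a \emph{complete} Segal $(\xF_{*}\times\Dop)$-space — this is immediate, since the equivalence of \cref{SMCisFDSeg} lands in $\CSeg_{\xF_{*}\times\Dop}(\mathcal{S})$ (equivalently, $M(\angled{1},\blank)$ is the complete Segal space corresponding to the underlying \icat{} $\mathcal{C}$) — and that the left fibration $\mathcal{M}\to\xF_{*}\times\Dop$ appearing in the statement is by definition the straightening of $M$. Therefore \cref{propn:csegsymmon} applies verbatim and gives an equivalence
\[ \CSeg_{\mathcal{M}}(\mathcal{S}) \isoto \CSeg_{\xF_{*}\times\Dop}(\mathcal{S})_{/M}. \]
Composing this with the previous display yields $\SMC_{\infty/\mathcal{C}^{\otimes}} \simeq \CSeg_{\mathcal{M}}(\mathcal{S})$, as desired. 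There is no real obstacle here: the only point demanding care is the bookkeeping identification in the previous paragraph, namely that the left fibration classified by the image of $\mathcal{C}^{\otimes}$ under \cref{SMCisFDSeg} coincides with the $\mathcal{M}$ in the statement; this is settled simply by tracing through the chain $\CatI \simeq \CSeg_{\Dop}(\mathcal{S})$, $\CMon(\CatI) \simeq \CMon(\CSeg_{\Dop}(\mathcal{S})) \simeq \CSeg_{\xF_{*}\times\Dop}(\mathcal{S})$ used in the proof of \cref{SMCisFDSeg}.
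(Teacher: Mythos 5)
Your proposal is correct and matches the paper's proof, which is exactly the one-line combination of \cref{SMCisFDSeg} with \cref{propn:csegsymmon}; the extra bookkeeping you carry out (identifying the $M$ classified by $\mathcal{M}$ with the image of $\mathcal{C}^{\otimes}$ under \cref{SMCisFDSeg}, and checking completeness) is exactly what the paper leaves implicit.
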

\begin{proof}
  Combine \cref{propn:csegsymmon} with \cref{SMCisFDSeg}.
\end{proof}

\subsection{Application to $\infty$-Operads}
\label{sec:appl-infty-oper}

Our next goal is to combine the results of the previous subsection
with those of \S\ref{subsec:envimg} to obtain a new description of
$\OpdI$ in terms of Segal and completeness conditions.

\begin{defn}
  Let $\pi \colon \mathcal{F} \to \xF_{*} \times \Dop$ be the left
  fibration corresponding to the symmetric monoidal category
  $\xF^{\amalg}$ viewed as a commutative monoid in Segal
  spaces. Unwinding the definitions, the category
  $\mathcal{F}$ has the following explicit description (where it is
  convenient to use the description of $\xF_{*}$ in terms of spans of finite
  sets): The objects of $\mathcal{F}$ are sequences of maps in $\xF$
  \[ 
    \begin{tikzcd}
      \mathbf{a}_{0} \arrow{r} \arrow{dr} & \cdots
      \arrow{r} & \mathbf{a}_{m} \arrow{dl} \\
      & \mathbf{n},
    \end{tikzcd}
  \]
  where this object lives over $(\mathbf{n}, [m])$ in $\xF_{*} \times
  \Dop$. A (necessarily cocartesian) morphism over $(\mathbf{n} \hookleftarrow \mathbf{x} \to
  \mathbf{n'}, [m'] \xto{\phi} [m])$ with this object as source is given by
  a commutative diagram
  \[
    \begin{tikzcd}[column sep=small,row sep=small]
      \mathbf{b}_{0} \arrow{dddr} \arrow{dr} \arrow[hookrightarrow]{rrr} & & &
      \mathbf{a}_{\phi(0)} \arrow{dddr}
      \arrow{dr} \\
       & \cdots \arrow{dr} & & & \cdots \arrow{dr} \\
       & & \mathbf{b}_{m'} \arrow{dl} \arrow[crossing over,hookrightarrow]{rrr} & & &
       \mathbf{a}_{\phi(m')} \arrow{dl} \\
       & \mathbf{x} \arrow{d} \arrow[hookrightarrow]{rrr} & & & \mathbf{n} \\
       & \mathbf{n}'
    \end{tikzcd}
  \]
  where the squares
  \[
    \begin{tikzcd}
      \mathbf{b}_{i}  \arrow{d} \arrow[hookrightarrow]{r} & \mathbf{a}_{\phi(i)}
      \arrow{d} \\
      \mathbf{x} \arrow[hookrightarrow]{r} & \mathbf{n}
    \end{tikzcd}
  \]
  are all pullback squares. (In other words, we restrict along $\phi$, pull
  back along $\mathbf{x} \hookrightarrow \mathbf{n}$, and compose with
  the map $\mathbf{x} \to \mathbf{n'}$.)
\end{defn}

\begin{remark}\label{rmk:FSeg}
  With this description of $\mathcal{F}$, the requirements for a
  functor $\Phi \colon \mathcal{F} \to \mathcal{S}$ to be a Segal
  $\mathcal{F}$-space from \cref{rmk:smcseg} amount to the
  following maps being equivalences:
  \begin{equation}
    \label{eq:FSeg1}\!
    \Phi\left(
      \begin{tikzcd}[cramped,column sep=tiny]
      \mathbf{a}_{0} \arrow{r} \arrow{dr} & \cdots
      \arrow{r} & \mathbf{a}_{m} \arrow{dl} \\
      & \mathbf{n}
    \end{tikzcd}
  \right) \to
  \Phi\left(
    \begin{tikzcd}[cramped,column sep=0em]
      \mathbf{a}_{0} \arrow{rr} \arrow{dr} & & \mathbf{a}_{1}
      \arrow{dl} \\
       & \mathbf{n}
     \end{tikzcd}\right)
   \!\times_{\!\!\!\!\Phi\left(
       \begin{tikzcd}[cramped,column sep=tiny,font=\tiny]
         \mathbf{a}_{1} \arrow{d} \\
         \mathbf{n}
       \end{tikzcd}
     \right)}
   \!\cdots
   \times_{\!\!\!\!\Phi\left(
       \begin{tikzcd}[cramped,column sep=tiny,font=\tiny]
         \mathbf{a}_{m-1} \arrow{d} \\
         \mathbf{n}
       \end{tikzcd}
     \right)}\!
  \Phi\left(
    \begin{tikzcd}[cramped,column sep=0em]
      \mathbf{a}_{m-1} \arrow{rr} \arrow{dr} & & \mathbf{a}_{m}
      \arrow{dl} \\
       & \mathbf{n}
     \end{tikzcd}\right)\!,
 \end{equation}
 \begin{equation}
   \label{eq:FSeg2}
     \Phi\left(
    \begin{tikzcd}[cramped,column sep=tiny]
      \mathbf{a} \arrow{rr} \arrow{dr} & & \mathbf{b}
      \arrow{dl} \\
       & \mathbf{n}
     \end{tikzcd}\right)
   \to
   \prod_{i = 1}^{n}      \Phi\left(
    \begin{tikzcd}[cramped,column sep=tiny]
      \mathbf{a}_{i} \arrow{rr} \arrow{dr} & & \mathbf{b}_{i}
      \arrow{dl} \\
       & \mathbf{1}
     \end{tikzcd}\right)\!,
 \end{equation}
 \begin{equation}
   \label{eq:FSeg3}
   \Phi\left(
     \begin{tikzcd}[cramped,column sep=tiny]
       \mathbf{a} \arrow{d} \\
       \mathbf{n}
     \end{tikzcd}
   \right)
   \to
   \prod_{i=1}^{n}    \Phi\left(
     \begin{tikzcd}[cramped,column sep=tiny]
       \mathbf{a}_{i} \arrow{d} \\
       \mathbf{1}
     \end{tikzcd}
   \right)\!.
 \end{equation}
\end{remark}

From \cref{cor:SMCslCSeg} we then get the following:
\begin{cor}\label{cor:smcxFcseg}
  There is an equivalence
  \[ \SMC_{\infty/\xF^{\amalg}} \isoto
    \CSeg_{\mathcal{F}}(\mathcal{S}),\]
  where the right-hand side is the full subcategory of
  $\Fun(\mathcal{F}, \mathcal{S})$ spanned by functors $\Phi$ satisfying
  conditions \cref{eq:FSeg1}--\cref{eq:FSeg3} and for which the Segal space
  $\Phi_{\angled{1},\mathbf{a}}$ is complete for every $\mathbf{a} \in
  \xF$. \qed
\end{cor}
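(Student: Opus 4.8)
The plan is to apply \cref{cor:SMCslCSeg} with $\mathcal{C}^{\otimes} = \xF^{\amalg}$ and then match the resulting data with the description in the statement. Since $\xF^{\amalg}$ is in particular an \icat{}, it corresponds under \cref{thm:CSSicat} to a commutative monoid in $\CSeg_{\Dop}(\mathcal{S})$, so \cref{cor:SMCslCSeg} applies and yields an equivalence $\SMC_{\infty/\xF^{\amalg}} \simeq \CSeg_{\mathcal{M}}(\mathcal{S})$, where $\mathcal{M} \to \xF_{*}\times\Dop$ is the left fibration classifying $\xF^{\amalg}$ as a (complete) Segal $(\xF_{*}\times\Dop)$-space. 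I would then observe that a left fibration depends only on the underlying functor to $\mathcal{S}$, and that completeness is merely a property of this functor; hence the left fibration classifying $\xF^{\amalg}$ as a complete Segal $(\xF_{*}\times\Dop)$-space coincides with the one classifying it as a Segal $(\xF_{*}\times\Dop)$-space, which is exactly the category $\mathcal{F}$ described explicitly above. So $\mathcal{M} \simeq \mathcal{F}$ over $\xF_{*}\times\Dop$, and $\CSeg_{\mathcal{M}}(\mathcal{S})$ is identified with $\CSeg_{\mathcal{F}}(\mathcal{S})$.

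It then remains to check that $\CSeg_{\mathcal{F}}(\mathcal{S})$, defined via \cref{rmk:smcseg} and the completeness condition for left fibrations to $\xF_{*}\times\Dop$, is the full subcategory of $\Fun(\mathcal{F},\mathcal{S})$ stated in the corollary. The Segal part is precisely \cref{rmk:FSeg}: for $\Phi \colon \mathcal{F}\to\mathcal{S}$, the conditions of \cref{rmk:smcseg} unwind, using the explicit description of $\mathcal{F}$, into the requirement that \cref{eq:FSeg1}, \cref{eq:FSeg2} and \cref{eq:FSeg3} be equivalences. For the completeness part, recall that a Segal $\mathcal{M}$-space $F$ is complete when $u_{M}^{*}F$ is complete in the sense of \cref{defn:lfibcompl}, that is, the Segal spaces $i_{b}^{*}u_{M}^{*}F$ are complete for all objects $b$ in the fibre of $\mathcal{M}_{\angled{1}}\to\Dop$ over $[0]$. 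Reading off the description of $\mathcal{F}$, these objects $b$ are exactly the objects of $\mathcal{F}$ lying over $(\angled{1},[0])$, \ie{} the finite sets $\mathbf{a}\in\xF$ (each equipped with its unique map to $\mathbf{1}$), and $i_{b}^{*}u_{M}^{*}F$ is the Segal space $\Phi_{\angled{1},\mathbf{a}}$ obtained from $\Phi = \pi_{!}F$ by restricting to the simplices lying over $\angled{1}$ at $\mathbf{a}$. Hence the completeness condition for $F$ is exactly that $\Phi_{\angled{1},\mathbf{a}}$ be complete for all $\mathbf{a}\in\xF$, as claimed.

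The whole argument is essentially a matter of unwinding definitions, so there is no serious obstacle; the two points that I expect to require a little care are the identification $\mathcal{M}\simeq\mathcal{F}$ — which relies on the harmless fact that passing from Segal spaces to complete Segal spaces does not change the associated left fibration — and the concrete interpretation of the completeness condition of \cref{propn:csegsymmon}, namely that completeness of $u_{M}^{*}F$ amounts to completeness of each of the Segal spaces $\Phi_{\angled{1},\mathbf{a}}$, $\mathbf{a}\in\xF$.
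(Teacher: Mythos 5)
Your proposal is correct and follows exactly the route the paper intends: the corollary is stated with no written proof precisely because it is the specialization of \cref{cor:SMCslCSeg} to $\mathcal{C}^{\otimes} = \xF^{\amalg}$, with the Segal conditions unwound in \cref{rmk:FSeg} and the completeness condition read off from the definitions. Your two points of care (that the classifying left fibration is insensitive to whether one regards $\xF^{\amalg}$ as a Segal or complete Segal $(\xF_{*}\times\Dop)$-space, and the identification of the fibre objects over $(\angled{1},[0])$ with the finite sets $\mathbf{a}$) are both handled correctly.
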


\begin{defn}
  We write  $\Fun'(\mathcal{F}, \mathcal{S})$ for the full subcategory
  of $\Fun(\mathcal{F}, \mathcal{S})$ spanned by
  functors $\Phi$ such that for every object
    \[ 
    \begin{tikzcd}
      \mathbf{a}_{0} \arrow{r} \arrow{dr} & \cdots
      \arrow{r} & \mathbf{a}_{m} \arrow{dl} \\
      & \mathbf{n},
    \end{tikzcd}
  \]
    in $\mathcal{F}$ and every map $\mathbf{n} \to \mathbf{n}'$ in $\xF$, the map
  \begin{equation}
    \label{eq:primecond}
    \Phi\left(
      \begin{tikzcd}
      \mathbf{a}_{0} \arrow{r} \arrow{dr} & \cdots
      \arrow{r} & \mathbf{a}_{m} \arrow{dl} \\
      & \mathbf{n}
    \end{tikzcd}
  \right) \to
  \Phi\left(
      \begin{tikzcd}
      \mathbf{a}_{0} \arrow{r} \arrow{dr} & \cdots
      \arrow{r} & \mathbf{a}_{m} \arrow{dl} \\
      & \mathbf{n}'
    \end{tikzcd}
  \right) 
\end{equation}
lying over $\mathbf{n} \xfrom{=} \mathbf{n} \to \mathbf{n}'$
is an equivalence.
We then write $\Seg'_{\mathcal{F}}(\mathcal{S})$
and $\CSeg'_{\mathcal{F}}(\mathcal{S})$ for the intersections of
$\Fun'(\mathcal{F},\mathcal{S})$ with the full subcategories
$\Seg_{\mathcal{F}}(\mathcal{S})$ and
$\CSeg_{\mathcal{F}}(\mathcal{S})$ in $\Fun(\mathcal{F},\mathcal{S})$,
respectively.
\end{defn}

Our goal is now to prove the following:
\begin{thm}\label{thm:opdascsegF}
  The equivalence of \cref{cor:smcxFcseg} restricts along the fully
  faithful inclusion $\Env' \colon \OpdI \hookrightarrow
  \SMC_{\infty/\xF^{\amalg}}$ from \cref{EnvoverFff} to an equivalence
  \[ \OpdI \isoto \CSeg'_{\mathcal{F}}(\mathcal{S}).\]
\end{thm}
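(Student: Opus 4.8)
The plan is to match the two collections of conditions that cut out $\OpdI$ inside $\SMC_{\infty/\xF^{\amalg}}$ and $\CSeg'_{\mathcal{F}}(\mathcal{S})$ inside $\CSeg_{\mathcal{F}}(\mathcal{S})$, under the equivalence $\SMC_{\infty/\xF^{\amalg}} \simeq \CSeg_{\mathcal{F}}(\mathcal{S})$ of \cref{cor:smcxFcseg}. By \cref{EnvoverFff} the left adjoint $\Env'$ is fully faithful, so its essential image is the full subcategory on which the counit of $\Env'\dashv U'$ is an equivalence; by \cref{propn:iopdimage} and the reformulations of \cref{rmk:iopdcondprime}, this is the full subcategory of $\SMC_{\infty/\xF^{\amalg}}$ spanned by the $\mathcal{C}^{\otimes}$ satisfying conditions ($1''$) and ($2''$). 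Since $\CSeg'_{\mathcal{F}}(\mathcal{S}) = \CSeg_{\mathcal{F}}(\mathcal{S}) \cap \Fun'(\mathcal{F},\mathcal{S})$ and $\CSeg_{\mathcal{F}}(\mathcal{S})$ is already identified with $\SMC_{\infty/\xF^{\amalg}}$, it then suffices to show: if $\mathcal{C}^{\otimes}\in\SMC_{\infty/\xF^{\amalg}}$ corresponds to the complete Segal $\mathcal{F}$-space $\Phi$, then $\mathcal{C}^{\otimes}$ satisfies ($1''$) and ($2''$) if and only if $\Phi$ satisfies the condition \cref{eq:primecond} defining $\Fun'(\mathcal{F},\mathcal{S})$.

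The first step is to make the dictionary explicit. By \cref{propn:Segsliceeq} (and the fibrewise formula for $\pi_{!}$ in its proof), $\Phi$ is obtained from $\mathcal{C}^{\otimes}$ by passing to fibres: for an object $\xi$ of $\mathcal{F}$ over $(\mathbf{n},[m])$, the space $\Phi(\xi)$ is the fibre of $(\mathcal{C}^{\times n})([m]) \to (\xF^{\times n})([m])$ over $\xi$. Unwinding the explicit description of $\mathcal{F}$, this gives $\Phi(\mathbf{a}\to\mathbf{n}) \simeq \prod_{i=1}^{n}\mathcal{C}_{(\mathbf{a}_{i})}^{\simeq}$ with $\mathbf{a}_{i}$ the fibre of $\mathbf{a}\to\mathbf{n}$ over $i$ (so in particular $\Phi(\mathbf{n}\xto{=}\mathbf{n}) \simeq (\mathcal{C}_{(1)}^{\simeq})^{\times n}$), and $\Phi(\mathbf{a}\to\mathbf{b}\to\mathbf{1}) \simeq \Map(\Delta^{1},\mathcal{C})_{\mathbf{a}\to\mathbf{b}}$ in the notation of \cref{rmk:iopdcondprime}. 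Moreover, since $\pi$ is a left fibration, $\Phi$ sends the (unique) morphism of $\mathcal{F}$ over $(\mathbf{n}\xfrom{=}\mathbf{n}\to\mathbf{n}',\id_{[m]})$ with a prescribed source to the map on fibres induced by cocartesian transport along that active map, i.e.\ by the functor $\mathcal{C}^{\times n}\to\mathcal{C}^{\times n'}$, $(x_{i})_{i}\mapsto\big(\bigotimes_{i\mapsto j} x_{i}\big)_{j}$ — ``tensor the factors together along the fibres of $\mathbf{n}\to\mathbf{n}'$''.

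Next I would run the comparison by simplicial degree. The Segal condition \cref{eq:FSeg1} writes $\Phi$ on an object over $(\mathbf{n},[m])$ as an iterated fibre product of its values on the spine edges and vertices, naturally in $\mathcal{F}$; hence \cref{eq:primecond} for all $m$ is equivalent to its instances with $m=0$ and $m=1$. For $m=0$: by \cref{eq:FSeg3} the instance of \cref{eq:primecond} for the object $\mathbf{n}\xto{=}\mathbf{n}$ and the map $\mathbf{n}\to\mathbf{1}$ is exactly the statement that the tensor map $(\mathcal{C}_{(1)}^{\simeq})^{\times n}\to\mathcal{C}_{(n)}^{\simeq}$ is an equivalence, namely ($1''$); conversely, given ($1''$), \cref{eq:FSeg3} rewrites both sides of any $m=0$ instance as products of copies of $\mathcal{C}_{(1)}^{\simeq}$ compatibly with the map, so \cref{eq:primecond} holds for $m=0$. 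For $m=1$: by \cref{eq:FSeg2} the instance of \cref{eq:primecond} for the object $\mathbf{p}\xto{\phi}\mathbf{q}\xto{=}\mathbf{q}$ and the map $\mathbf{q}\to\mathbf{1}$ has left-hand side $\prod_{l\in\mathbf{q}}\Map(\Delta^{1},\mathcal{C})_{\mathbf{p}_{l}\to\mathbf{1}}$, right-hand side $\Map(\Delta^{1},\mathcal{C})_{\phi}$, and the map between them is the ``tensor morphisms together'' map — that is, exactly ($2''$) for $\phi$; conversely, granting ($1''$) and ($2''$), \cref{eq:FSeg2} and the compatibility of the various tensoring maps (associativity of $\amalg$, and of $\otimes$ up to the harmless permutations of tensor factors appearing in the proof of \cref{propn:iopdimage}) give \cref{eq:primecond} for all $m=1$ instances. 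Assembling the three degrees with \cref{rmk:iopdcondprime} and the first paragraph yields the asserted equivalence $\OpdI\isoto\CSeg'_{\mathcal{F}}(\mathcal{S})$.

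All of the content lies in the first two paragraphs; the main obstacle I expect is purely organizational — keeping track of the reindexings built into ($1''$) and ($2''$) (fibres of a composite $\mathbf{a}\to\mathbf{n}\to\mathbf{n}'$, disjoint unions of fibres, permutations of tensor factors) against the three Segal maps \cref{eq:FSeg1}--\cref{eq:FSeg3}, and checking that the ``tensor together'' maps occurring in \cref{eq:primecond} and in ($2''$) literally agree after these identifications.
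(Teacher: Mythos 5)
Your proposal is correct and follows essentially the same route as the paper: identify $\Phi$ with $\mathcal{C}^{\otimes}$ fibrewise, use the Segal condition \cref{eq:FSeg1} to reduce \cref{eq:primecond} to its $m=0$ and $m=1$ instances, and match those with conditions ($1''$) and ($2''$) of \cref{rmk:iopdcondprime}. The only difference is organizational — the paper isolates the reduction to the two special instances as a separate lemma (\cref{lem:segprimesimple}, proved via the 2-of-3 property and the Segal decompositions) before doing the dictionary matching, whereas you carry out the same reduction inline.
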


We begin by simplifying the definition of
$\Seg'_{\mathcal{F}}(\mathcal{S})$ a bit:
\begin{lemma}\label{lem:segprimesimple}
  Suppose $\Phi$ is in $\Seg_{\mathcal{F}}(\mathcal{S})$. Then $\Phi$
  lies in $\Seg'_{\mathcal{F}}(\mathcal{S})$ \IFF{} the two following
  conditions hold:
  \begin{enumerate}[(1)]
\item For every object $\mathbf{n}$ in $\xF$, the map
  \[  \Phi\left(
      \begin{tikzcd}[cramped,column sep=tiny]
        \mathbf{n} \arrow[equals]{d} \\
        \mathbf{n}
      \end{tikzcd}
    \right) \to \Phi\left(
      \begin{tikzcd}[cramped,column sep=tiny]
        \mathbf{n} \arrow{d} \\
        \mathbf{1}
      \end{tikzcd}
    \right)
  \]
  over $ \mathbf{n} \xfrom{=}
  \mathbf{n} \to \mathbf{1}$
  is an equivalence.
    \item For every morphism $\mathbf{n} \to \mathbf{m}$ in $\xF$, the
    map
    \[ \Phi\left(
        \begin{tikzcd}[cramped,column sep=tiny]
          \mathbf{n} \arrow{rr} \arrow{dr} & & \mathbf{m}
          \arrow[equals]{dl} \\
          & \mathbf{m}
        \end{tikzcd}\right) \to \Phi\left(
        \begin{tikzcd}[cramped,column sep=tiny]
          \mathbf{n} \arrow{rr} \arrow{dr} & & \mathbf{m}
          \arrow{dl} \\
          & \mathbf{1}
        \end{tikzcd}\right)
    \]
    over $\mathbf{m} \xfrom{=}
	\mathbf{m} \to \mathbf{1}$
    is an equivalence.
\end{enumerate}
\end{lemma}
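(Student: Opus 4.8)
The \emph{only if} direction is immediate: conditions~(1) and~(2) are exactly the instances of the condition \cref{eq:primecond} defining $\Seg'_{\mathcal{F}}(\mathcal{S})$ for the objects $\bigl(\mathbf{n}\xto{=}\mathbf{n}\bigr)$ over $(\mathbf{n},[0])$ and $\bigl(\mathbf{n}\to\mathbf{m}\xto{=}\mathbf{m}\bigr)$ over $(\mathbf{m},[1])$, together with the unique maps to $\mathbf{1}$ in $\xF$. For the converse the plan is as follows. Assume $\Phi\in\Seg_{\mathcal{F}}(\mathcal{S})$ satisfies~(1) and~(2); we must show that \cref{eq:primecond} holds for every object $P=(\mathbf{a}_{0}\to\cdots\to\mathbf{a}_{m}\xto{\gamma}\mathbf{n})$ of $\mathcal{F}$ and every $g\colon\mathbf{n}\to\mathbf{n}'$ in $\xF$. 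Writing $P\cdot g$ for the object with the same underlying chain and structure map $g\circ\gamma$, there is a canonical morphism $P\to P\cdot g$ in $\mathcal{F}$ over $\bigl(\mathbf{n}\xfrom{=}\mathbf{n}\xto{g}\mathbf{n}',\id\bigr)$ whose image under $\Phi$ is \cref{eq:primecond}; set $\bar P:=P\cdot(\mathbf{n}\to\mathbf{1})$ and $c_{P}:=\Phi(P\to\bar P)$.

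First I would reduce to showing that $c_{P}$ is an equivalence for \emph{every} object $P$: since the canonical morphism $P\to\bar P$ factors as $P\to P\cdot g\to\overline{P\cdot g}=\bar P$, the two-out-of-three property then gives \cref{eq:primecond} for arbitrary $g$. Next I would reduce to the cases $m\le 1$: for $m\ge 2$ the $\Dop$-direction Segal condition \cref{eq:FSeg1} exhibits $\Phi(P)$ as an iterated fibre product of the spaces $\Phi$ of the edges $P^{k,k+1}$ (objects over $(\mathbf{n},[1])$) and vertices $P^{k}$ (objects over $(\mathbf{n},[0])$) of $P$, obtained by restricting along the inert maps $\iota_{k(k+1)}$ and $\iota_{kk}$; since these restrictions commute with postcomposing the structure map by $\mathbf{n}\to\mathbf{1}$, the map $c_{P}$ is identified with the map of iterated fibre products induced by the $c_{P^{k,k+1}}$ and $c_{P^{k}}$, hence is an equivalence once those are.

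For $m\in\{0,1\}$ the key point is this. Let $a_{P}\colon P^{\flat}\to P$ be the cocartesian morphism of $\mathcal{F}$ over $\bigl(\mathbf{a}_{m}\xfrom{=}\mathbf{a}_{m}\xto{\gamma}\mathbf{n},\id\bigr)$; its source $P^{\flat}=(\mathbf{a}_{0}\to\cdots\to\mathbf{a}_{m}\xto{=}\mathbf{a}_{m})$ is the ``normalization'' of $P$, with $\overline{P^{\flat}}=\bar P$. Composing spans shows that $P^{\flat}\xto{a_{P}}P\to\bar P$ is the canonical morphism $P^{\flat}\to\overline{P^{\flat}}$, so that $c_{P}\circ\Phi(a_{P})=\Phi(P^{\flat}\to\overline{P^{\flat}})$, and the right-hand side is an equivalence \emph{by hypothesis}, being precisely an instance of condition~(1) when $m=0$ and of condition~(2) when $m=1$. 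It therefore remains to prove that $\Phi(a_{P})$ is an equivalence, and here I would use the $\xF_*$-direction Segal conditions \cref{eq:FSeg3} (for $m=0$) and \cref{eq:FSeg2} (for $m=1$), which decompose $\Phi(P)\isoto\prod_{i\in\mathbf{n}}\Phi(P_{i})$ along the projections over the $\rho_{i}$, where $P_{i}$ is the fibre of $P$ over $i\in\mathbf{n}$. The composite $P^{\flat}\xto{a_{P}}P\to P_{i}$ lies over the span $\mathbf{a}_{m}\hookleftarrow\gamma^{-1}(i)\to\mathbf{1}$; factoring this inert-then-active presents it as the inert projection $P^{\flat}\to(P^{\flat})_{i}$ followed by the canonical morphism $(P^{\flat})_{i}\to\overline{(P^{\flat})_{i}}=P_{i}$, with $(P^{\flat})_{i}$ again ``normalized'' and with underlying chain that of $P_{i}$, so that $\Phi$ sends this last morphism to an equivalence (another instance of~(1), resp.~(2)). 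Since the sets $\gamma^{-1}(i)$ partition $\mathbf{a}_{m}$, the Segal condition also gives that the inert projections $\Phi(P^{\flat})\to\Phi((P^{\flat})_{i})$ are jointly an equivalence onto $\prod_{i}\Phi((P^{\flat})_{i})$; hence $\Phi(a_{P})$, whose $i$-th component is such a projection followed by an equivalence, is an equivalence, and therefore so is $c_{P}$.

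I expect the last step to be the main obstacle: one has to keep careful enough track of the various identifications to be sure that $c_{P}$, the normalization morphisms $a_{P}$, and the Segal decompositions are genuinely compatible rather than merely abstractly isomorphic. All of this is ``local'', however --- it amounts to composing a handful of spans of finite sets --- so it should go through routinely once the morphisms $a_{P}$ are in place.
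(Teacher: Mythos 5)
Your proposal is correct and follows essentially the same route as the paper's proof: both reduce to chains of length $m\le 1$ via the $\Dop$-direction Segal condition, use the two-out-of-three property to reduce from arbitrary $\mathbf{n}\to\mathbf{n}'$ to the unique map to $\mathbf{1}$, and then handle the remaining cases by comparing with the ``normalized'' object $(\mathbf{a}\to\mathbf{b}=\mathbf{b})$ and decomposing the comparison map into a product of instances of conditions (1)/(2) via the $\xF_{*}$-direction Segal conditions. The only differences are cosmetic (the order of the two reductions, and that you spell out the iterated Segal decomposition over the partition $\mathbf{b}=\coprod_i\gamma^{-1}(i)$ which the paper leaves implicit).
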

\begin{proof}
  Clearly (1) and (2) are special cases of \cref{eq:primecond}, so we
  need to prove that these special cases suffice. We first observe that
  condition \cref{eq:FSeg1} for $\Seg_{\mathcal{F}}(\mathcal{S})$
  implies that $\Phi$ lies in $\Seg'_{\mathcal{F}}(\mathcal{S})$
  \IFF{} condition \cref{eq:primecond} holds for $m = 0$ and $m =
  1$. Now we claim that (1) and (2) are equivalent to these two cases,
  respectively; we will prove the case where $m = 1$, the proof for
  $m = 0$ being similar.

  For any morphism $\mathbf{n} \to \mathbf{n}'$ in $\xF$, consider the
  maps
  \[       \Phi\left(
        \begin{tikzcd}[cramped,column sep=tiny]
          \mathbf{a} \arrow{rr} \arrow{dr} & & \mathbf{b}
          \arrow{dl} \\
          & \mathbf{n}
        \end{tikzcd}\right) \to 
      \Phi\left(
        \begin{tikzcd}[cramped,column sep=tiny]
          \mathbf{a} \arrow{rr} \arrow{dr} & & \mathbf{b}
          \arrow{dl} \\
          & \mathbf{n}'
        \end{tikzcd}\right)  \to 
      \Phi\left(
        \begin{tikzcd}[cramped,column sep=tiny]
          \mathbf{a} \arrow{rr} \arrow{dr} & & \mathbf{b}
          \arrow{dl} \\
          & \mathbf{1}
        \end{tikzcd}\right).
    \]
    where the first map lies over
    $\mathbf{n} \xfrom{=}
	\mathbf{n} \to \mathbf{n}'$ and the second
    lies over $\mathbf{n}' \xfrom{=}
	\mathbf{n}' \to
    \mathbf{1}$. Then the composite lies over
    $\mathbf{n} \xfrom{=}
	\mathbf{n} \to \mathbf{1}$, so that by the
    2-of-3 property of equivalences, condition
    \cref{eq:primecond} holds for all maps \IFF{} it holds for maps of
    the form $\mathbf{n} \to \mathbf{1}$.

    Next, consider for any map $\mathbf{b} \to \mathbf{n}$ in $\xF$ the maps
    \[ \Phi\left(
        \begin{tikzcd}[cramped,column sep=tiny]
          \mathbf{a} \arrow{rr} \arrow{dr} & & \mathbf{b}
          \arrow[equals]{dl} \\
          & \mathbf{b}
        \end{tikzcd}\right) \to 
      \Phi\left(
        \begin{tikzcd}[cramped,column sep=tiny]
          \mathbf{a} \arrow{rr} \arrow{dr} & & \mathbf{b}
          \arrow{dl} \\
          & \mathbf{n}
        \end{tikzcd}\right)  \to 
      \Phi\left(
        \begin{tikzcd}[cramped,column sep=tiny]
          \mathbf{a} \arrow{rr} \arrow{dr} & & \mathbf{b}
          \arrow{dl} \\
          & \mathbf{1}
        \end{tikzcd}\right).
    \]
    Here assumption (2) implies that the composite map is an
    equivalence, so that the second map is an equivalence \IFF{} the
    first one is. But the first map decomposes using the Segal
    conditions as
    \[  \prod_{i=1}^{n}     \Phi\left(
        \begin{tikzcd}[cramped,column sep=tiny]
          \mathbf{a}_{i} \arrow{rr} \arrow{dr} & & \mathbf{b}_{i}
          \arrow[equals]{dl} \\
          & \mathbf{b}_{i}
        \end{tikzcd}\right) \to  \prod_{i=1}^{n}
      \Phi\left(
        \begin{tikzcd}[cramped,column sep=tiny]
          \mathbf{a}_{i} \arrow{rr} \arrow{dr} & & \mathbf{b}_{i}
          \arrow{dl} \\
          & \mathbf{1}
        \end{tikzcd}\right),\]
    which is also an equivalence under assumption (2). Thus the Segal
    conditions and (2) imply that \cref{eq:primecond} holds in the
    case $m = 1$, as required.
\end{proof}

\begin{proof}[Proof of \cref{thm:opdascsegF}]
  We must show that under the equivalence of \cref{cor:smcxFcseg}, the
  two conditions from \cref{propn:iopdimage} correspond precisely to
  \cref{eq:primecond}. Equivalently, we can check that the alternative
  conditions ($1''$) and ($2''$) from \cref{rmk:iopdcondprime} correspond to those from
  \cref{lem:segprimesimple}. To this end, let $\mathcal{C}^{\otimes}
  \to \xF^{\amalg}$ be an object of $\SMC_{\infty/\xF^{\amalg}}$ and
  $\Phi$ the corresponding object in
  $\CSeg_{\mathcal{F}}(\mathcal{S})$. Unwinding the definitions, we
  have equivalences
  \[       \Phi\left(
        \begin{tikzcd}[cramped,column sep=tiny]
          \mathbf{n} \arrow{d} \\
          \mathbf{1}
        \end{tikzcd}\right) \simeq \mathcal{C}_{(n)}^{\simeq}, \qquad
  \Phi\left(
        \begin{tikzcd}[cramped,column sep=tiny]
          \mathbf{n} \arrow{rr}{\phi} \arrow{dr} & & \mathbf{m}
          \arrow{dl} \\
          & \mathbf{1}
        \end{tikzcd}\right)
      \simeq \Map(\Delta^{1}, \mathcal{C})_{\phi}, \]
    under which the tensoring maps
 \[ (\mathcal{C}^{\simeq}_{(1)})^{\times n} \to
   \mathcal{C}^{\simeq}_{(n)}, \qquad \prod_{i=1}^{m} \Map(\Delta^{1},
   \mathcal{C})_{\mathbf{n}_{i} \to \mathbf{1}} \to
   \Map(\Delta^{1}, \mathcal{C})_{\phi} \]
 from conditions ($1''$) and
 ($2''$) correspond to
 \[ \prod_{i=1}^{n} \Phi\left(
        \begin{tikzcd}[cramped,column sep=tiny]
          \mathbf{1} \arrow[equals]{d} \\
          \mathbf{1}
        \end{tikzcd}\right) \isofrom \Phi\left(
        \begin{tikzcd}[cramped,column sep=tiny]
          \mathbf{n} \arrow[equals]{d} \\
          \mathbf{n}
        \end{tikzcd}\right) \to \Phi\left(
        \begin{tikzcd}[cramped,column sep=tiny]
          \mathbf{n} \arrow{d} \\
          \mathbf{1}
        \end{tikzcd}\right)\]
    and
\[ \prod_{i=1}\Phi\left(
        \begin{tikzcd}[cramped,column sep=tiny]
          \mathbf{n_{i}} \arrow{rr}{\phi} \arrow{dr} & & \mathbf{1}
          \arrow[equals]{dl} \\
          & \mathbf{1}
        \end{tikzcd}\right) \isofrom
      \Phi\left(\begin{tikzcd}[cramped,column sep=tiny]
          \mathbf{n} \arrow{rr}{\phi} \arrow{dr} & & \mathbf{m}
          \arrow[equals]{dl} \\
          & \mathbf{m}
        \end{tikzcd}\right)
      \to \Phi\left(
        \begin{tikzcd}[cramped,column sep=tiny]
          \mathbf{n} \arrow{rr}{\phi} \arrow{dr} & & \mathbf{m}
          \arrow{dl} \\
          & \mathbf{1}
        \end{tikzcd}\right),
      \]
  respectively. Here we have exactly the maps from
  \cref{lem:segprimesimple}, so that the conditions there
  precisely correspond to those from \cref{rmk:iopdcondprime}, as required.
\end{proof}

\cref{thm:opdascsegF} has the following immediate corollary:
\begin{cor}\label{cor:pres}
  The $\infty$-category $\OpdI$ is presentable.
\end{cor}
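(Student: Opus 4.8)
The plan is to read this off directly from \cref{thm:opdascsegF}, which identifies $\OpdI$ with the full subcategory $\CSeg'_{\mathcal{F}}(\mathcal{S})$ of the functor \icat{} $\Fun(\mathcal{F}, \mathcal{S})$. Here $\mathcal{F}$ is a small \icat{}, being the total space of a left fibration over the small category $\xF_{*} \times \Dop$, so $\Fun(\mathcal{F}, \mathcal{S})$ is presentable (indeed a presheaf \icat{}). It then suffices to exhibit $\CSeg'_{\mathcal{F}}(\mathcal{S})$ as an accessible localization of $\Fun(\mathcal{F}, \mathcal{S})$, and by \cite{HTT}*{Proposition 5.5.4.15} this reduces to writing it as the full subcategory of objects local with respect to some small set of morphisms.

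The first step is to observe that the Segal conditions \cref{eq:FSeg1}--\cref{eq:FSeg3} defining $\Seg_{\mathcal{F}}(\mathcal{S})$ are locality conditions of the required kind: writing $h_{(-)}$ for the Yoneda embedding of $\mathcal{F}$, the Segal map $\Phi(X) \to \lim_{E \in \mathcal{F}^{\el}_{X/}} \Phi(\overline{E})$ is an equivalence precisely when $\Phi$ is local with respect to $\colim_{E} h_{\overline{E}} \to h_{X}$, and since $\mathcal{F}$ is small there is only a set of these maps. (Alternatively one may cite that $\Seg_{\mathcal{O}}(\mathcal{S})$ is always an accessible localization of $\Fun(\mathcal{O}, \mathcal{S})$ for an algebraic pattern $\mathcal{O}$, as in \cite{patterns1}.) Next, the ``prime'' condition \cref{eq:primecond} is locality with respect to the set of maps $h_{Y} \to h_{X}$ induced by the morphisms $X \to Y$ of $\mathcal{F}$ that appear there.

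The one point that needs a little care is the completeness condition, which is phrased externally as asking that each Segal space $\Phi_{\angled{1},\mathbf{a}}$ be complete. Here I would invoke the fibrewise description of completeness from \cref{defn:lfibcompl} and \cref{propn:lfibcompl}, together with the standard presentation of complete Segal spaces as the localization of simplicial spaces at the map $E^{1} \to \Delta^{0}$: pulling this back along the inclusion $\mathcal{F}_{\angled{1}} \hookrightarrow \mathcal{F}$ and the functors $i_{b}^{*}$ yields a set of morphisms in $\Fun(\mathcal{F}, \mathcal{S})$ with respect to which completeness is exactly locality. Taking the union of the three sets of morphisms so produced gives a small set $S$ with $\CSeg'_{\mathcal{F}}(\mathcal{S}) = S^{-1}\Fun(\mathcal{F}, \mathcal{S})$, which is therefore presentable. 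I expect the main (and only mildly delicate) obstacle to be precisely this translation of the completeness condition into an honest set of maps of presheaves, rather than anything substantive.
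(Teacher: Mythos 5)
Your proposal is correct and follows exactly the paper's route: the paper treats the corollary as immediate from \cref{thm:opdascsegF}, since $\CSeg'_{\mathcal{F}}(\mathcal{S})$ is by construction an accessible localization of the presheaf \icat{} $\Fun(\mathcal{F},\mathcal{S})$. You merely spell out the small set of maps (Segal, prime, and completeness conditions as locality conditions) that the paper leaves implicit, which is a harmless elaboration rather than a different argument.
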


The following observation will be useful later:
\begin{lemma}\label{lem:Seg'comp}
  An object $\Phi \in \Seg'_{\mathcal{F}}(\mathcal{S})$ lies in
  $\CSeg'_{\mathcal{F}}(\mathcal{S})$ \IFF{} the Segal space
  $\Phi_{\angled{1},\mathbf{1}}$ is complete.
\end{lemma}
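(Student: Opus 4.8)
The plan is to reduce everything to the single fact, already used in the proof of \cref{SMCisFDSeg}, that complete Segal spaces are closed under limits, and in particular under finite products. The ``only if'' direction needs no work: by \cref{cor:smcxFcseg} the \icat{} $\CSeg_{\mathcal{F}}(\mathcal{S})$ consists precisely of those Segal $\mathcal{F}$-spaces $\Phi$ for which \emph{every} Segal space $\Phi_{\angled{1},\mathbf{a}}$ ($\mathbf{a}\in\xF$) is complete, and $\CSeg'_{\mathcal{F}}(\mathcal{S})$ is its intersection with $\Fun'(\mathcal{F},\mathcal{S})$; so any object of $\CSeg'_{\mathcal{F}}(\mathcal{S})$ in particular has $\Phi_{\angled{1},\mathbf{1}}$ complete. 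The content is the converse: given $\Phi\in\Seg'_{\mathcal{F}}(\mathcal{S})$ with $\Phi_{\angled{1},\mathbf{1}}$ complete, I must show $\Phi_{\angled{1},\mathbf{a}}$ is complete for every $\mathbf{a}$, and I will do this by exhibiting a natural equivalence of (Segal) simplicial spaces $\Phi_{\angled{1},\mathbf{a}}\simeq(\Phi_{\angled{1},\mathbf{1}})^{\times a}$.

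To build this equivalence I first unwind, following \cref{defn:lfibcompl}, that $\Phi_{\angled{1},\mathbf{a}}$ is the simplicial space sending $[n]$ to the value of $\Phi$ on the degenerate object of $\mathcal{F}$ given by the constant chain $\mathbf{a}=\cdots=\mathbf{a}$ of length $n$ lying over $\mathbf{1}\in\xF$. Next I introduce the auxiliary simplicial space $B_{\mathbf{a}}$ sending $[n]$ to the value of $\Phi$ on the \emph{same} constant chain $\mathbf{a}=\cdots=\mathbf{a}$ but now lying over $\mathbf{a}$ via the identity map. There are then two steps. \emph{Step 1:} the morphism of $\mathcal{F}$ lying over $\mathbf{a}\xfrom{=}\mathbf{a}\to\mathbf{1}$ induces a map of simplicial spaces $B_{\mathbf{a}}\to\Phi_{\angled{1},\mathbf{a}}$ whose component at each $[n]$ is exactly an instance of the defining condition \cref{eq:primecond} of $\Fun'(\mathcal{F},\mathcal{S})$; since $\Phi$ lies in $\Fun'(\mathcal{F},\mathcal{S})$ each component is an equivalence, so $B_{\mathbf{a}}\isoto\Phi_{\angled{1},\mathbf{a}}$. \emph{Step 2:} the constant chain over $\mathbf{a}$ decomposes, along the inert maps $\rho_{i}\colon\angled{a}\to\angled{1}$ for $i\in\mathbf{a}$, into $a$ copies of the constant chain over $\mathbf{1}$; applying the Segal condition for $\mathcal{F}$ (\cref{rmk:FSeg}) then gives $B_{\mathbf{a}}\simeq(\Phi_{\angled{1},\mathbf{1}})^{\times a}$, naturally in $[n]$. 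Combining the two steps with closure of complete Segal spaces under finite products, $\Phi_{\angled{1},\mathbf{a}}$ is complete for every $\mathbf{a}$, so $\Phi\in\CSeg_{\mathcal{F}}(\mathcal{S})$ and hence $\Phi\in\CSeg'_{\mathcal{F}}(\mathcal{S})$.

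I expect the only slightly delicate point to be the bookkeeping in Step 1: one should check that the two families of degenerate objects of $\mathcal{F}$ involved really do assemble into functors $\Dop\to\mathcal{F}$ --- namely the functors classified by the objects $\mathbf{a}\xrightarrow{\id}\mathbf{a}$ and $\mathbf{a}\to\mathbf{1}$ of the fibres $\mathcal{F}_{\angled{a}}$ and $\mathcal{F}_{\angled{1}}$ over $[0]$ --- so that the morphism between these objects induces the comparison $B_{\mathbf{a}}\to\Phi_{\angled{1},\mathbf{a}}$ naturally in $[n]$ and its levelwise components are genuinely the maps occurring in \cref{eq:primecond}. Once this identification is made, everything else is a direct application of results already established.
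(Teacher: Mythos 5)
Your proposal is correct and follows essentially the same route as the paper: the paper's proof is exactly your zig-zag, with the middle term being your auxiliary simplicial space $B_{\mathbf{a}}$ (the constant chain over $\mathbf{a}$), the right-hand map being the $\Fun'(\mathcal{F},\mathcal{S})$-condition \cref{eq:primecond} (your Step 1), and the left-hand map being the Segal decomposition into $(\Phi_{\angled{1},\mathbf{1}})^{\times a}$ (your Step 2), followed by closure of complete Segal spaces under products. The only difference is that you spell out the trivial ``only if'' direction and the levelwise bookkeeping, which the paper leaves implicit.
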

\begin{proof}
  For $\mathbf{n} \in \xF$ we have a natural zig-zag of simplicial
  spaces
  \[ \prod_{i=1}^{n} \Phi\left(
        \begin{tikzcd}[cramped,column sep=tiny]
          \mathbf{1} \arrow[equals]{r} \arrow[equals]{dr} & \cdots
          \arrow[equals]{r} & \mathbf{1}
          \arrow[equals]{dl} \\
          & \mathbf{1}
        \end{tikzcd}\right)
      \from
      \Phi\left(
        \begin{tikzcd}[cramped,column sep=tiny]
          \mathbf{n} \arrow[equals]{r} \arrow[equals]{dr} & \cdots
          \arrow[equals]{r} & \mathbf{n}
          \arrow[equals]{dl} \\
          & \mathbf{n}
        \end{tikzcd}\right) \to
      \Phi\left(
        \begin{tikzcd}[cramped,column sep=tiny]
          \mathbf{n} \arrow[equals]{r} \arrow{dr} & \cdots
          \arrow[equals]{r} & \mathbf{n}
          \arrow{dl} \\
          & \mathbf{1}
        \end{tikzcd}\right),
    \]
    where both maps are equivalences for $\Phi$ in
    $\Seg'_{\mathcal{F}}(\mathcal{S})$. Thus we have an equivalence
    between the Segal spaces $\Phi_{\angled{1},\mathbf{1}}^{\times n}$
    and $\Phi_{\angled{1},\mathbf{n}}$. Since complete Segal spaces
    are closed under limits, this implies that if
    $\Phi_{\angled{1},\mathbf{1}}$ is complete then so 
    $\Phi_{\angled{1},\mathbf{n}}$, and hence by definition $\Phi$
    lies in $\CSeg'_{\mathcal{F}}(\mathcal{S})$.
\end{proof}

\section{From Barwick's $\infty$-Operads to Symmetric Monoidal
  $\infty$-Categories}
In this section we first review Barwick's model of \iopds{} in
\S\ref{subsec:baropd}. Then in \S\ref{subsec:DFtoF} we use our work in
the previous section to give a new proof of the equivalence between
Barwick's and Lurie's models by passing through the equivalence of
\cref{thm:opdascsegF}.

\subsection{Barwick's $\infty$-Operads}\label{subsec:baropd}
Here we recall Barwick's definition of \iopds{} from
\cite{BarwickOpCat} (there called \emph{complete Segal operads}). This
definition can be phrased as complete Segal spaces for a certain
algebraic pattern, which we introduce first:

\begin{defn}
  The category $\DF$ has as objects pairs $([n] \in \simp, f \colon [n]
  \to \xF)$ interpreted as chains (of length $n$) of composable arrows in 
  $\xF$,
  and morphisms $([n], f) \to ([m], g)$ are given by
  morphisms $\phi \colon [n] \to [m]$ in $\simp$ together with a
  natural transformation $\eta \colon f \to g \circ \phi$ such that
  \begin{itemize}
  \item for every $i \in [n]$, the map $\eta_{i} \colon f(i) \to g(i)$
    is an injection,
  \item for every $i, j$ in $[n]$ with $i \leq j$, the commutative
    square
    \[
      \begin{tikzcd}
        f(i) \arrow{r} \arrow[hookrightarrow]{d}{\eta_{i}} & f(j)
        \arrow[hookrightarrow]{d}{\eta_{j}} \\
        g(i) \arrow{r} & g(j)
      \end{tikzcd}
    \]
    is a pullback.
  \end{itemize}
  For small values of $n$, we shall also write out an object
  $([n],f)$ as a chain
  \[ f(0) {\to}\cdots{\to} f(n).\]
\end{defn}
Note that the projection $\DF \to \simp$ is a cartesian fibration. We
can lift the active-inert factorization system on $\simp$ to one on
$\DF$ by declaring a map $(\phi, \eta) \colon ([n],f) \to ([m],g)$ to
be
\begin{itemize}
\item \emph{active} if $\phi$ is active in $\simp$ and $\eta_{i}
  \colon f(i) \to g(i)$ is an isomorphism for all $i$,
\item \emph{inert} if $\phi$ is inert in $\simp$.
\end{itemize}
This gives a factorization system on $\DF$ compatible with that on $\simp$.
\begin{remark}
  Given $(\phi, \eta) \colon ([n],f) \to ([m],g)$ where $\phi \colon [n]
  \to [m]$ factors as $[n] \xto{a} [k] \xto{i} [m]$ with $a$ active and
  $i$ inert, to find the active-inert factorization in $\DF$ we first take a factorization of $(\phi, \eta)$ as
  \[ ([n], f) \to ([k], gi) \to ([m], g) \] with the second morphism
  cartesian, and then a factorization of $([n],f) \to ([k],gi)$ as
  $([n], f) \to ([k], g') \to ([k], gi)$ where $g'$ is given by taking
  pullbacks along $f(n) \to gi(a(n)) = gi(k)$.
\end{remark}
\begin{defn}
  We give $\DFop$ the structure of an algebraic pattern using the
  inert-active factorization system we just defined, and with the
  elementary objects being the $1$-chains $\mathbf{n}{\to}\mathbf{1}$ for all
  $\mathbf{n}$ in $\xF$ as well as the $0$-chain $\mathbf{1}$.
\end{defn}

\begin{remark}\label{rmk:DFSeg}
  A functor $F \colon \DFop \to \mathcal{S}$ is a Segal $\DFop$-space
  \IFF{} the following three conditions hold:
  \begin{enumerate}[(1)]
  \item $F([n], f) \isoto F([1], f_{01}) \times_{F(f(1))} \cdots
    \times_{F(f(n-1))} F([1], f_{(n-1)n})$,
  \item $F(\mathbf{a}{\to}\mathbf{b}) \isoto \prod_{i \in
      \mathbf{b}} F(\mathbf{a}_{i}{\to}\mathbf{1})$,
  \item $F(\mathbf{b}) \isoto \prod_{i \in
      \mathbf{b}} F(\mathbf{1})$.    
  \end{enumerate}
\end{remark}

\begin{remark}
Segal $\DFop$-objects describe the algebraic structure of \iopds{}:
\begin{itemize}
\item $F(\mathbf{1})$ is the space of objects,
\item $F(\mathbf{n}{\to}\mathbf{1})$ is the space of $n$-ary
  operations, with the map
  \[ F(\mathbf{n}{\to}\mathbf{1}) \to F(\mathbf{1})^{\times n
 } \times F(\mathbf{1})\] coming from the $n+1$ inclusions
 $(\mathbf{1}) \to (\mathbf{n}{\to}\mathbf{1})$ assigning to
 each operation its sources and target,
\item $F(\mathbf{n}{\to}\mathbf{m}{\to}\mathbf{1})$ decomposes
  under the Segal condition as the space of $n_{i}$-ary operations
  that can be composed with an $m$-ary operation,
\item the map $F(\mathbf{n}{\to}\mathbf{m}{\to}\mathbf{1}) \to
  F(\mathbf{n}{\to}\mathbf{1})$ induced by the inner face map
  $d_{1}$ encodes composition,
\item and the remaining data encodes the homotopy-coherent
  associativity and unitality of this composition operation.
\end{itemize}
\end{remark}

To complete the definition we also need to add a completeness
condition:
\begin{defn}
  Let $u \colon \Dop \to \DFop$ be the functor given by
  \[ [n] \,\,\mapsto\,\, ([n], \mathbf{1}\xto{=}\mathbf{1}\xto{=}
  \cdots
  \xto{=}\mathbf{1}).\] Composition
  with $u$ gives a functor $u^{*} \colon \Seg_{\DFop}(\mathcal{S}) \to
  \Seg_{\Dop}(\mathcal{S})$, and we say $F \in
  \Seg_{\DFop}(\mathcal{S})$ is \emph{complete} if $u^{*}F$ is a
  complete Segal space. We write $\CSeg_{\DFop}(\mathcal{S})$ for the
  full subcategory of $\Seg_{\DFop}(\mathcal{S})$ spanned by the
  complete Segal objects. 
\end{defn}

\subsection{Comparison}\label{subsec:DFtoF}
Our goal is now to show that $\CSeg_{\DFop}(\mathcal{S})$ is
equivalent to the \icat{} $\CSeg'_{\mathcal{F}}(\mathcal{S})$
considered in the previous section, where
$\pi \colon \mathcal{F} \to \xF_{*} \times \Dop$ is the left fibration
corresponding to the symmetric monoidal category $\xF^{\amalg}$ viewed as a commutative monoid in Segal spaces. As a first step, we
see that there is a functor relating $\mathcal{F}$ to $\DFop$:

\begin{defn}
  We define $P \colon \mathcal{F} \to \DFop$ on the object
  \[ 
    \begin{tikzcd}
      \mathbf{a}_{0} \arrow{r} \arrow{dr} & \cdots
      \arrow{r} & \mathbf{a}_{m} \arrow{dl} \\
      & \mathbf{n},
    \end{tikzcd}
  \]
  by forgetting the ``augmentation'' to $\mathbf{n}$, so that $P$
  takes this object to $\mathbf{a}_{0}{\to}\cdots{\to}\mathbf{a}_{m}$.
  Comparing the definitions of the morphisms in
  $\mathcal{F}$ and $\DFop$, we see that a morphism in $\mathcal{F}$
  restricts to a morphism in $\DFop$ when we forget the augmentations,
  which gives the action of $P$ on morphisms.
\end{defn}

\begin{remark}\label{rmk:Pcoc}
  The functor $P$ fits in a commutative triangle
  \[
    \begin{tikzcd}
      \mathcal{F}  \arrow{dr} \arrow{rr}{P} & & \DFop \arrow{dl} \\
       & \Dop,
    \end{tikzcd}
  \]
  where both maps to $\Dop$ are cocartesian fibrations. From the
  definitions of the cocartesian morphisms we also see that $P$
  preserves these.
\end{remark}

The key observation is the following:
\begin{propn}\label{propn:Ploc}
  The functor $P \colon \mathcal{F} \to \DFop$ is a localization, and
  composition with it gives an equivalence
  \[ \Fun(\DFop, \mathcal{S}) \isoto \Fun'(\mathcal{F}, \mathcal{S}). \]
\end{propn}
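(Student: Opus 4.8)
The plan is to show that $P$ exhibits $\DFop$ as the localization of $\mathcal{F}$ at the class $W$ of ``re-augmentation'' morphisms \cref{eq:primecond}. Granting this, the second assertion is immediate: $\Fun'(\mathcal{F},\mathcal{S})$ is \emph{by definition} the full subcategory of $W$-local functors, and restriction along a localization $\mathcal{F}\to\mathcal{F}[W^{-1}]$ is fully faithful with essential image exactly the $W$-local functors.

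First I would pin down the relevant class of morphisms. By construction $P$ inverts every map of the form \cref{eq:primecond} — such a map is sent to an identity of $\DFop$ — so $W$ is contained in the class $\widehat{W}$ of \emph{all} $P$-inverted morphisms. Conversely, unwinding the explicit description of $\mathcal{F}$ one sees that a morphism lies in $\widehat{W}$ precisely when it lies over $(\mathbf{n}\xfrom{=}\mathbf{n}\hookleftarrow\mathbf{x}\to\mathbf{n}',\id_{[m]})$ for an injection $\mathbf{x}\hookrightarrow\mathbf{n}$ whose image contains that of $\mathbf{a}_{m}\to\mathbf{n}$; such a morphism factors as the ``restriction'' $r\colon(\mathbf{a}_{\bullet}\to\mathbf{n})\to(\mathbf{a}_{\bullet}\to\mathbf{x})$ followed by the re-augmentation $(\mathbf{a}_{\bullet}\to\mathbf{x})\to(\mathbf{a}_{\bullet}\to\mathbf{n}')$ along $\mathbf{x}\to\mathbf{n}'$. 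The second factor is of the form \cref{eq:primecond}, and $r$ is a retraction of the re-augmentation along $\mathbf{x}\hookrightarrow\mathbf{n}$ (which is again of the form \cref{eq:primecond}), so by two-out-of-three any functor inverting \cref{eq:primecond} also inverts $r$, hence all of $\widehat{W}$. Thus $W$ and $\widehat{W}$ have the same local objects, and it is harmless to work with $\widehat{W}$ throughout.

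For the localization statement itself I would argue fibrewise over $\Dop$. By \cref{rmk:Pcoc}, $P$ is a map of cocartesian fibrations over $\Dop$ that preserves cocartesian edges. The fibre $\mathcal{F}_{[m]}$ is, explicitly, the category of $m$-chains $\mathbf{a}_{0}\to\cdots\to\mathbf{a}_{m}$ in $\xF$ equipped with a further map $\mathbf{a}_{m}\to\mathbf{n}$, and $P_{[m]}$ forgets this map. I would check that the functor $\DFop_{[m]}\to\mathcal{F}_{[m]}$ sending $\mathbf{c}_{0}\to\cdots\to\mathbf{c}_{m}$ to $\mathbf{c}_{0}\to\cdots\to\mathbf{c}_{m}\xto{=}\mathbf{c}_{m}$ is a fully faithful left adjoint of $P_{[m]}$ — the adjunction identity reduces, via the pullback clauses defining morphisms of $\mathcal{F}$, to the clause defining morphisms of $\DF$ — so each $P_{[m]}$ is a localization at $\widehat{W}\cap\mathcal{F}_{[m]}$. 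Since $P$ preserves cocartesian edges, cocartesian transport for $\mathcal{F}\to\Dop$ carries $\widehat{W}\cap\mathcal{F}_{[m]}$ into $\widehat{W}\cap\mathcal{F}_{[m']}$, so these fibrewise localizations are compatible with the transition functors; a now-standard argument — fibrewise localization of a cocartesian fibration at a class of vertical morphisms stable under cocartesian transport is again a cocartesian fibration, with the localized fibres — then identifies $\mathcal{F}[\widehat{W}^{-1}]$ with the cocartesian fibration classified by $[m]\mapsto\mathcal{F}_{[m]}[(\widehat{W}\cap\mathcal{F}_{[m]})^{-1}]\simeq\DFop_{[m]}$, i.e.\ with $\DFop$, compatibly with $P$.

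I expect this last step to be the main obstacle: the fibrewise left adjoints above do \emph{not} glue to a global right-adjoint-style section of $P$ (so one cannot present $P$ as a genuine reflective or coreflective localization), and one must instead invoke — or prove by hand — the assembly statement for fibrewise localizations of cocartesian fibrations. An alternative that avoids fibres is to verify directly that $P^{*}\colon\Fun(\DFop,\mathcal{S})\to\Fun(\mathcal{F},\mathcal{S})$ is fully faithful with essential image $\Fun'(\mathcal{F},\mathcal{S})$ by computing mapping spaces in $\mathcal{F}[\widehat{W}^{-1}]$ through a calculus-of-fractions argument for $\widehat{W}$; but the fibrewise route keeps the bookkeeping under control.
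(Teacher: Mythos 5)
Your proposal is correct and follows essentially the same route as the paper: the paper also constructs the fibrewise fully faithful left adjoint $S_{m}$ (augmenting a chain by the identity of $\mathbf{a}_{m}$) to conclude each $P_{m}$ is a localization, and then assembles these via exactly the ``now-standard argument'' you anticipate, namely Hinich's result on fibrewise localization of cocartesian fibrations (\cref{propn:Hinich}, citing \cite{HinichLoc} and \cite{paradj}). Your extra care in comparing the class of all $P$-inverted morphisms with the re-augmentation class defining $\Fun'(\mathcal{F},\mathcal{S})$ is a point the paper treats rather tersely, so that step is a welcome addition rather than a deviation.
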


We begin by looking at $P$ on each fibre over $\Dop$:
\begin{defn}
  For $[m] \in \simp$, let $S_{m} \colon \simp_{\xF,[m]}^{\op} \to
  \mathcal{F}_{[m]}$ be the functor given by 
 taking the object $\mathbf{a}_{0}{\to}\cdots{\to}\mathbf{a}_{m}$ to
  \[
    \begin{tikzcd}
      \mathbf{a}_{0} \arrow{r} \arrow{dr} & \cdots
      \arrow{r} & \mathbf{a}_{m} \arrow[equals]{dl} \\
      & \mathbf{a}_{m},
    \end{tikzcd}
  \]
  and a morphism $(\mathbf{a}_{0}{\to}\cdots{\to}\mathbf{a}_{m}) \to
  (\mathbf{b}_{0}{\to}\cdots{\to}\mathbf{b}_{m})$ in $\DF$ given
  by $\eta \colon \mathbf{a}_{(\blank)} \to
  \mathbf{b}_{(\blank)}$ to the morphism in $\mathcal{F}$ given by
  pulling back along $\mathbf{a}_{m}
  \hookrightarrow \mathbf{b}_{m}$. 
\end{defn}

\begin{lemma}\label{lem:Pmloc}
  Let $P_{m}$ be the restriction of $P$ to the
  fibre over $[m] \in \Dop$.
  \begin{enumerate}[(i)]
  \item The functor  $S_{m}$ is left adjoint to $P_{m}$.
  \item $P_{m}$ is a localization.
  \end{enumerate}
\end{lemma}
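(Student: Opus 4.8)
The plan is to prove the two parts in order, deducing (ii) from (i) together with a check that the counit of the adjunction $S_m \dashv P_m$ is an equivalence. First I would unwind what the fibres $\mathcal{F}_{[m]}$ and $(\DFop)_{[m]} \simeq \simp_{\xF,[m]}^{\op}$ look like: an object of $\mathcal{F}_{[m]}$ is a chain $\mathbf{a}_0 \to \cdots \to \mathbf{a}_m$ in $\xF$ together with an ``augmentation'' $\mathbf{a}_m \to \mathbf{n}$ (note that since we are in the fibre over $[m]$, and morphisms in $\mathcal{F}$ over the identity of $[m]$ are built from spans $\mathbf{n} \hookleftarrow \mathbf{x} \to \mathbf{n}'$, the augmentation target is only recorded up to the data of pullback along injections into $\mathbf{n}$), while $P_m$ simply forgets $\mathbf{n}$ and $S_m$ adjoins the tautological augmentation $\mathbf{a}_m \xrightarrow{=} \mathbf{a}_m$. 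The key point is that $S_m$ lands in the objects whose augmentation is an isomorphism, and that any object $(\mathbf{a}_\bullet \to \mathbf{n})$ admits a canonical map \emph{from} $S_m P_m$ of it, namely the one recorded by pulling back the whole chain along the image of $\mathbf{a}_m$ in $\mathbf{n}$ — this will be the counit.

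For part (i), I would exhibit the adjunction by producing a natural equivalence
\[ \Map_{\mathcal{F}_{[m]}}\bigl(S_m(\mathbf{b}_\bullet), (\mathbf{a}_\bullet \to \mathbf{n})\bigr) \simeq \Map_{\simp_{\xF,[m]}^{\op}}\bigl(\mathbf{b}_\bullet, P_m(\mathbf{a}_\bullet \to \mathbf{n})\bigr) = \Map_{\simp_{\xF,[m]}^{\op}}(\mathbf{b}_\bullet, \mathbf{a}_\bullet). \]
Translating through the definition of morphisms in $\mathcal{F}$ (a chain of pullback squares together with a map of augmentations), a morphism $S_m(\mathbf{b}_\bullet) \to (\mathbf{a}_\bullet \to \mathbf{n})$ consists of an injection $\mathbf{b}_m \hookrightarrow \mathbf{n}$ (the ``augmentation'' part, since $S_m(\mathbf{b}_\bullet)$ is augmented by $\mathbf{b}_m$ itself) together with compatible injections $\mathbf{b}_i \hookrightarrow \mathbf{a}_i$ making the relevant squares pullbacks — but the condition that the squares over $\mathbf{b}_m \hookrightarrow \mathbf{n}$ are pullbacks means the $\mathbf{b}_i \hookrightarrow \mathbf{a}_i$ are determined by the injection $\mathbf{b}_m \hookrightarrow \mathbf{n}$ factoring through $\mathbf{a}_m \to \mathbf{n}$; equivalently one finds that such data is the same as a morphism $\mathbf{b}_\bullet \to \mathbf{a}_\bullet$ in $\DF$ (i.e.\ in $\simp_{\xF,[m]}^{\op}$) plus no extra choices. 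Running this identification carefully gives the hom-equivalence, and naturality is straightforward since everything is defined by pullback; I would also note $P_m S_m \simeq \id$ essentially by construction, so the unit is an equivalence and hence $P_m$ is automatically the left adjoint's retraction.

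For part (ii), recall that a right adjoint $P_m$ with the property that the counit $S_m P_m \Rightarrow \id$ is an equivalence exhibits its source as a localization (equivalently: $S_m$ is fully faithful, which follows from $P_m S_m \simeq \id$, and $P_m$ is then a localization functor). So it remains to check the counit $\epsilon\colon S_m P_m(\mathbf{a}_\bullet \to \mathbf{n}) \to (\mathbf{a}_\bullet \to \mathbf{n})$ is an equivalence in $\mathcal{F}_{[m]}$. Unwinding, $S_m P_m(\mathbf{a}_\bullet \to \mathbf{n}) = (\mathbf{a}_\bullet \to \mathbf{a}_m)$, and $\epsilon$ is the morphism in $\mathcal{F}$ given by the injection $\mathbf{a}_m \xrightarrow{=} \mathbf{a}_m$ together with identities — wait, more precisely $\epsilon$ is the canonical map whose augmentation component is the identity injection on $\mathbf{a}_m$ and which is an identity on the chain; since morphisms in $\mathcal{F}_{[m]}$ equipped with an isomorphism on the underlying chain and the identity on augmentations are equivalences, $\epsilon$ is an equivalence. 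The main obstacle I anticipate is bookkeeping: correctly matching the span-description of morphisms in $\mathcal{F}$ (pull back along $\mathbf{x}\hookrightarrow\mathbf{n}$, restrict, then postcompose) against the injection-plus-pullback-squares description of morphisms in $\DF$, and making sure the ``over $[m]$'' constraint is handled so that the adjunction hom-equivalence really is natural in both variables; the actual categorical content (fully faithful right adjoint $\Rightarrow$ localization) is immediate once the adjunction and counit are pinned down.
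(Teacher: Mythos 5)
Part (i) of your proposal is fine in substance: identifying the hom-spaces $\Map_{\mathcal{F}_{[m]}}(S_m(\mathbf{b}_\bullet), (\mathbf{a}_\bullet \to \mathbf{n})) \simeq \Map(\mathbf{b}_\bullet, \mathbf{a}_\bullet)$ directly is a legitimate alternative to the paper's route, which instead writes down the counit $\alpha \colon S_m P_m \to \id$ explicitly and verifies the triangle identities using $P_m S_m = \id$. But part (ii) contains a genuine error. The counit $\epsilon \colon S_m P_m(\mathbf{a}_\bullet \to \mathbf{n}) \to (\mathbf{a}_\bullet \to \mathbf{n})$ is \emph{not} an equivalence in $\mathcal{F}_{[m]}$: it lies over the span $\mathbf{a}_m \xfrom{=} \mathbf{a}_m \to \mathbf{n}$ in $\xF_*$, and since $\mathcal{F}_{[m]} \to \xF_*$ is a left fibration (hence conservative), $\epsilon$ is an equivalence only when $\mathbf{a}_m \to \mathbf{n}$ is an isomorphism. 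Your verification asserts that $\epsilon$ is ``the identity on augmentations,'' but it precisely changes the augmentation from $\id_{\mathbf{a}_m}$ to $\mathbf{a}_m \to \mathbf{n}$. Note also that if the counit \emph{were} an equivalence, then together with $P_m S_m \simeq \id$ this would make $P_m$ an equivalence of $\infty$-categories rather than a localization, which would force $\Fun'(\mathcal{F},\mathcal{S}) = \Fun(\mathcal{F},\mathcal{S})$ in \cref{propn:Ploc} --- contradicting the whole point of that subcategory. The criterion you quote at the start of (ii) is also garbled: ``counit is an equivalence'' and ``$S_m$ is fully faithful'' are not equivalent conditions here (the latter is equivalent to the \emph{unit} being an equivalence).

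The repair is what the paper does: since $P_m \alpha$ is the identity (one of the triangle identities), the counit $\alpha$ lies pointwise in the class $W$ of morphisms that $P_m$ sends to equivalences. Hence after localizing $\mathcal{F}_{[m]}$ at $W$, the counit becomes invertible and $S_m$, $P_m$ induce mutually inverse equivalences $\mathcal{F}_{[m]}[W^{-1}] \simeq (\DFop)_{[m]}$. Equivalently, one invokes the general fact that a functor admitting a fully faithful left (or right) adjoint exhibits its target as the localization of its source at the class of morphisms it inverts --- full faithfulness of $S_m$ does follow from $P_m S_m \simeq \id$ as you say, and that, not invertibility of the counit, is the hypothesis you need.
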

\begin{proof}
  We have $P_{m} S_{m} = \id$ by inspection. We can define a natural
  transformation $\alpha\colon S_{m}P_{m} \to \id_{\mathcal{F}}$ given at the
  object
    \[
    \begin{tikzcd}
      \mathbf{a}_{0} \arrow{r} \arrow{dr} & \cdots
      \arrow{r} & \mathbf{a}_{m} \arrow{dl} \\
      & \mathbf{n},
    \end{tikzcd}
  \]
  by the map from
  \[
    \begin{tikzcd}
      \mathbf{a}_{0} \arrow{r} \arrow{dr} & \cdots
      \arrow{r} & \mathbf{a}_{m} \arrow[equals]{dl} \\
      & \mathbf{a}_{m}
    \end{tikzcd}
  \]
  lying over $ \mathbf{a}_{m} \xfrom{=}
  \mathbf{a}_{m} \to \mathbf{n}$
  (given by composing with $\mathbf{a}_{m}\to \mathbf{n}$). Then
  $\alpha S_{m}$ and $P_{m}\alpha$ are clearly both the respective identity
  transformations, so this indeed exhibits $S_{m}$ as the left
  adjoint of $P_{m}$. This proves (i). Moreover, since $P_{m}\alpha$
  is the identity we see that $\alpha$ becomes a natural isomorphism
  after we invert the morphisms in $\mathcal{F}_{[m]}$ that are taken
  to isomorphisms by $P_{m}$. This means that after localizing, $S_{m}$
  is an inverse of $P_{m}$, which proves (ii).
\end{proof}

To prove \cref{propn:Ploc} we use the following criterion:
\begin{propn}[Hinich]\label{propn:Hinich}
  Suppose we have a commutative triangle
  \[
    \begin{tikzcd}
      \mathcal{E}  \arrow{dr}{p} \arrow{rr}{f} && \mathcal{E}' \arrow{dl}{p'}
      \\
       & \mathcal{B}
    \end{tikzcd}
  \]
  where $p$ and $p'$ are cocartesian fibrations and $f$ preserves
  cocartesian morphisms. If for every $b \in \mathcal{B}$ the functor
  on fibres $f_{b} \colon \mathcal{E}_{b} \to \mathcal{E}'_{b}$ is a
  localization, then so is $f$.
\end{propn}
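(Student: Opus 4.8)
The plan is to realize $f$ as the Dwyer--Kan localization of $\mathcal{E}$ at the class of morphisms it inverts, by first describing that class in terms of the two cocartesian fibrations and then reducing everything to the fibrewise hypothesis. First I would identify $W_{f}$, the class of morphisms of $\mathcal{E}$ that $f$ sends to equivalences. Every morphism of $\mathcal{E}$ factors as a $p$-cocartesian morphism followed by a morphism in a fibre; since $f$ preserves cocartesian morphisms, a cocartesian morphism lying over a non-invertible map of $\mathcal{B}$ maps to a cocartesian morphism over the same map and so is never inverted. Hence a morphism of $\mathcal{E}$ is inverted by $f$ \IFF{} it lies over an equivalence of $\mathcal{B}$ and, after being replaced up to equivalence by a morphism in some fibre $\mathcal{E}_{b}$, is inverted by $f_{b}$. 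Thus localizing $\mathcal{E}$ at $W_{f}$ is the same as localizing at $W := \bigsqcup_{b \in \mathcal{B}} W_{b}$, where $W_{b} \subseteq \mathcal{E}_{b}$ is the class inverted by $f_{b}$.

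The key point is that $W$ is stable under cocartesian pushforward: if $w \in W_{b}$ and $\beta \colon b \to b'$, then since $f$ preserves cocartesian morphisms one has $f(\beta_{!}w) \simeq \beta_{!}(f_{b}w)$, an equivalence, so $\beta_{!}w \in W_{b'}$. Granting this, the localization $\ell \colon \mathcal{E} \to \mathcal{E}[W^{-1}]$ still lies over $\mathcal{B}$ (the structure map $p$ inverts $W$, as $W$-morphisms lie over equivalences), the projection $\mathcal{E}[W^{-1}] \to \mathcal{B}$ is again a cocartesian fibration with fibre $\mathcal{E}_{b}[W_{b}^{-1}]$ over $b$, and $\ell$ preserves cocartesian morphisms. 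This statement --- that fibrewise localization of a cocartesian fibration at a pushforward-stable fibrewise class of arrows again yields a cocartesian fibration, with the localized fibres --- is the technical heart of the argument and, I expect, the main obstacle; it may be quoted from Hinich, or deduced from the description of functor \icats{} out of a Grothendieck construction as sections of a relative functor \icat{} over $\mathcal{B}$, which reduces the required universal property to the fibrewise ones.

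Finally, since $f$ inverts $W$, it factors (uniquely up to contractible choice) as $f = g \circ \ell$ with $g \colon \mathcal{E}[W^{-1}] \to \mathcal{E}'$. The functor $g$ preserves cocartesian morphisms: $\ell$ is the identity on objects, so by the previous step every cocartesian morphism of $\mathcal{E}[W^{-1}]$ is, up to equivalence, $\ell(c)$ for a $p$-cocartesian morphism $c$ of $\mathcal{E}$, and $g(\ell(c)) = f(c)$ is cocartesian. On the fibre over $b$, $g$ restricts to the comparison functor $\mathcal{E}_{b}[W_{b}^{-1}] \to \mathcal{E}'_{b}$ determined by the localization $f_{b}$, which is an equivalence by hypothesis. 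A morphism of cocartesian fibrations over a fixed base that preserves cocartesian morphisms and is a fibrewise equivalence is itself an equivalence (e.g.\ by straightening), so $g$ is an equivalence and $f \simeq \ell$ is a localization.
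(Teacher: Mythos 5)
Your proposal is correct and follows essentially the same route as the paper: both arguments rest on the observation that the fibrewise classes $W_{b}$ are stable under cocartesian pushforward (because $f$ preserves cocartesian morphisms) and then invoke Hinich's theorem on fibrewise localization of cocartesian fibrations. The only cosmetic difference is that you unpack the final comparison (building $\mathcal{E}[W^{-1}]$ and checking $g$ is a fibrewise equivalence preserving cocartesian morphisms), whereas the paper cites a version of Hinich's result (\cite{HinichLoc}*{Proposition 2.1.4}, in the form of \cite{paradj}*{Proposition 4.2.5}) that delivers the conclusion directly.
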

\begin{proof}
  Let $W_{b}$ denote the collection of morphisms in $\mathcal{E}_{b}$
  that are taken to equivalences by $p_{b}$; since $f$ preserves
  cocartesian morphisms we have for every map $\beta \colon b \to b'$
  a commutative square
  \[
    \begin{tikzcd}
      \mathcal{E}_{b} \arrow{d} \arrow{r}{\beta_{!}} &
      \mathcal{E}_{b'} \arrow{d} \\
      \mathcal{E}'_{b} \arrow{r}{\beta_{!}} & \mathcal{E}'_{b'},
    \end{tikzcd}
  \]
  from which it is immediate that $\beta_{!}W_{b} \subseteq W_{b'}$.   
  Unstraightening, we see that $f$ corresponds to the natural
  localization maps $\mathcal{E}_{b} \to \mathcal{E}_{b}[W_{b}^{-1}]$.
  It follows from Hinich's work on localizations of fibrations in
  \cite{HinichLoc} that $\mathcal{E}'$ is then the localization of
  $\mathcal{E}$ at the union of the $W_{b}$'s, which is to say at the
  maps that $f$ takes to equivalences. (More precisely, we apply
  \cite{HinichLoc}*{Proposition 2.1.4} in the form
  \cite{paradj}*{Proposition 4.2.5}.)
\end{proof}

\begin{proof}[Proof of \cref{propn:Ploc}]
  We saw in \cref{rmk:Pcoc} that $P$
  preserves cocartesian morphisms over $\Dop$ and in \cref{lem:Pmloc}
  that fibrewise $P_{m}$ is a localization for every $[m] \in
  \Dop$. \cref{propn:Hinich} then implies that $P$ is also a
  localization. If $W$ denotes the collection of morphisms in
  $\mathcal{F}$ that are taken to isomorphisms in $\DFop$, then it follows that composition with $P$ gives a fully
  faithful functor
  \[ P^{*} \colon \Fun(\DFop, \mathcal{S}) \to \Fun(\mathcal{F},
    \mathcal{S}) \]
  whose image is spanned by the functors $\mathcal{F} \to \mathcal{S}$
  that take the morphisms in $W$ to equivalences in $\mathcal{S}$.
  We can identify the morphisms in $W$ as those morphisms in
  $\mathcal{F}$ that lie over an identity in $\Dop$ and over a map of
  the form $ \mathbf{n}  \xfrom{=}
  \mathbf{n} \to \mathbf{n}''$ in
  $\xF_{*}$.   By definition, $\Fun'(\mathcal{F}, \mathcal{S})$ is
  the full subcategory of functors that take these morphisms to equivalences,
  and so it is precisely the image of $P^{*}$, as required.
\end{proof}

\begin{cor}\label{cor:FDFcomp}
  Composition with $P$ induces equivalences
    \[ P^{*} \colon
  \Seg_{\DFop}(\mathcal{S}) \isoto \Seg'_{\mathcal{F}}(\mathcal{S}),\]
  \[ P^{*} \colon
  \CSeg_{\DFop}(\mathcal{S}) \isoto \CSeg'_{\mathcal{F}}(\mathcal{S}).\]
\end{cor}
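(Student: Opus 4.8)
The plan is to bootstrap from \cref{propn:Ploc}, which already exhibits $P^{*}$ as an equivalence between $\Fun(\DFop, \mathcal{S})$ and the full subcategory $\Fun'(\mathcal{F}, \mathcal{S})$. Since $\Seg'_{\mathcal{F}}(\mathcal{S})$ and $\CSeg'_{\mathcal{F}}(\mathcal{S})$ are by definition the intersections of $\Fun'(\mathcal{F},\mathcal{S})$ with $\Seg_{\mathcal{F}}(\mathcal{S})$ and $\CSeg_{\mathcal{F}}(\mathcal{S})$, it suffices to prove two statements: first, that for $F \colon \DFop \to \mathcal{S}$ one has $F \in \Seg_{\DFop}(\mathcal{S})$ \IFF{} $P^{*}F \in \Seg_{\mathcal{F}}(\mathcal{S})$; and second, that for such an $F$ in $\Seg_{\DFop}(\mathcal{S})$ one has $F \in \CSeg_{\DFop}(\mathcal{S})$ \IFF{} $P^{*}F \in \CSeg'_{\mathcal{F}}(\mathcal{S})$. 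Throughout, the key elementary fact is that $P$ sends an object of $\mathcal{F}$ to its underlying chain, so that for $\Phi := P^{*}F$ the value of $\Phi$ on an augmented chain $\mathbf{a}_{0}\to\cdots\to\mathbf{a}_{m}$ over $\mathbf{n}$ equals $F(\mathbf{a}_{0}\to\cdots\to\mathbf{a}_{m})$, regardless of the augmentation.

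For the Segal statement I would match the three conditions of \cref{rmk:DFSeg} for $F$ against the three conditions \cref{eq:FSeg1}--\cref{eq:FSeg3} of \cref{rmk:FSeg} for $\Phi = P^{*}F$. Condition \cref{eq:FSeg1} for $\Phi$ at an augmented chain becomes, after applying $P$, exactly condition~(1) of \cref{rmk:DFSeg} for the underlying chain, so these are tautologically equivalent. For the other two: condition~(3) of \cref{rmk:DFSeg} is the instance of \cref{eq:FSeg3} with object the $0$-chain $\mathbf{n}$ carrying the identity augmentation $\mathbf{n} \xto{=} \mathbf{n}$, and conversely \cref{eq:FSeg3} for a general augmentation $\mathbf{a}\to\mathbf{n}$ follows from condition~(3) applied to $\mathbf{a}$ and to each fibre $\mathbf{a}_{i}$; likewise condition~(2) of \cref{rmk:DFSeg} is the instance of \cref{eq:FSeg2} with identity augmentation $\mathbf{b}\xto{=}\mathbf{b}$, and conversely \cref{eq:FSeg2} for a general augmentation $\mathbf{b}\to\mathbf{n}$ follows from condition~(2) since the decomposition $\mathbf{b} = \coprod_{i}\mathbf{b}_{i}$ refines the trivial one and the relevant comparison maps are compatible with passing to fibres. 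Hence $F \in \Seg_{\DFop}(\mathcal{S}) \iff \Phi \in \Seg_{\mathcal{F}}(\mathcal{S})$, and since $\Phi \in \Fun'(\mathcal{F},\mathcal{S})$ always, also $\iff \Phi \in \Seg'_{\mathcal{F}}(\mathcal{S})$; combined with \cref{propn:Ploc} this gives the first displayed equivalence.

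For the completeness statement I would invoke \cref{lem:Seg'comp}: an object $\Phi \in \Seg'_{\mathcal{F}}(\mathcal{S})$ lies in $\CSeg'_{\mathcal{F}}(\mathcal{S})$ \IFF{} the Segal space $\Phi_{\angled{1},\mathbf{1}}$ is complete. Unwinding the definitions (as in the proof of \cref{lem:Seg'comp}), this Segal space is the restriction $v^{*}\Phi$ of $\Phi$ along the functor $v \colon \Dop \to \mathcal{F}$ sending $[m]$ to the constant chain $\mathbf{1}\xto{=}\cdots\xto{=}\mathbf{1}$ with the identity augmentation to $\mathbf{1}$; and one checks directly from the constructions that $P \circ v$ is precisely the functor $u \colon \Dop \to \DFop$ used to define completeness of $\DFop$-Segal spaces. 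Therefore, for $\Phi = P^{*}F$, we have $\Phi_{\angled{1},\mathbf{1}} \simeq v^{*}P^{*}F = (P \circ v)^{*}F = u^{*}F$, so $P^{*}F \in \CSeg'_{\mathcal{F}}(\mathcal{S})$ \IFF{} $u^{*}F$ is a complete Segal space \IFF{} $F \in \CSeg_{\DFop}(\mathcal{S})$, which yields the second equivalence.

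The step I expect to require the most care is the Segal matching, because the two algebraic patterns have their elementary objects organized differently: the pattern $\mathcal{F}$ has as elementary $1$-simplices all chains $\mathbf{a}\to\mathbf{b}$ (with the forced augmentation to $\mathbf{1}$) and as elementary $0$-simplices all $\mathbf{a}\in\xF$, whereas $\DFop$ has only the $1$-chains $\mathbf{n}\to\mathbf{1}$ and the single $0$-chain $\mathbf{1}$. One must therefore verify both that the coarser conditions of \cref{rmk:DFSeg} arise as special cases of \cref{eq:FSeg1}--\cref{eq:FSeg3} and that, conversely, the latter for arbitrary augmentations are generated by the former. The completeness step is by comparison purely formal, once the identification $P\circ v = u$ has been made. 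Together with \cref{thm:opdascsegF}, this corollary delivers the equivalence $\OpdI \simeq \CSeg_{\DFop}(\mathcal{S})$ between Lurie's and Barwick's models.
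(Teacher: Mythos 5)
Your proposal is correct and follows essentially the same route as the paper: reduce to \cref{propn:Ploc}, match the Segal conditions of \cref{rmk:DFSeg} against \cref{eq:FSeg1}--\cref{eq:FSeg3}, and handle completeness via \cref{lem:Seg'comp} and the identification $P\circ v = u$. The only difference is that you spell out the point the paper leaves implicit, namely that the $\mathcal{F}$-Segal conditions for arbitrary augmentations are generated (via 2-out-of-3 and compatibility with fibres) by the unaugmented $\DFop$-conditions — a worthwhile detail, correctly handled.
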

\begin{proof}
  We want to show that these subcategories correspond to each other under the
  equivalence of \cref{propn:Ploc}. In other words, we must show that
  a functor $\Phi \colon \DFop \to \mathcal{S}$ lies in
  $\Seg_{\DFop}(\mathcal{S})$ \IFF{} $P^{*}\Phi$ lies in
  $\Seg'_{\mathcal{F}}(\mathcal{S})$, and similarly for completeness.
  For the Segal conditions this is clear since the conditions in 
  \cref{rmk:FSeg} applied to $P^{*}\Phi$ give precisely the Segal
  conditions in \cref{rmk:DFSeg}, while for completeness this follows
  similarly using the simplified condition from \cref{lem:Seg'comp}.
\end{proof}

Combining \cref{cor:FDFcomp} with \cref{thm:opdascsegF} we have a
zig-zag of equivalences
\[ \OpdI \isoto \CSeg'_{\mathcal{F}}(\mathcal{S}) \isofrom
  \CSeg_{\DFop}(\mathcal{S}),\]
which gives:
\begin{cor}
  There is an equivalence of \icats{}
  \[ \OpdI \simeq \CSeg_{\DFop}(\mathcal{S})\]
  between Lurie's and Barwick's models for \iopds{}. \qed
\end{cor}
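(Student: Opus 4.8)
The plan is to read off the corollary as a one-step composition of the two equivalences assembled in the previous sections. By \cref{thm:opdascsegF} there is an equivalence $\OpdI \isoto \CSeg'_{\mathcal{F}}(\mathcal{S})$, and by \cref{cor:FDFcomp} composition with the comparison functor $P \colon \mathcal{F} \to \DFop$ gives an equivalence $P^{*} \colon \CSeg_{\DFop}(\mathcal{S}) \isoto \CSeg'_{\mathcal{F}}(\mathcal{S})$. Concatenating the first equivalence with a quasi-inverse of the second produces the desired equivalence $\OpdI \simeq \CSeg_{\DFop}(\mathcal{S})$, and unwinding the construction shows that it identifies Lurie's notion of \iopd{} with Barwick's complete Segal $\DFop$-spaces. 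So, granting those two cited results, there is essentially nothing left to do; the content lives entirely in the two inputs.

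For \cref{thm:opdascsegF} the plan is: start from the fully faithful envelope $\Env' \colon \OpdI \hookrightarrow \SMC_{\infty/\xF^{\amalg}}$ of \cref{EnvoverFff} together with the explicit description of its essential image as the ``hereditary'' symmetric monoidal \icats{} over $\xF^{\amalg}$ from \cref{propn:iopdimage}; transport this along the complete-Segal-space presentation $\SMC_{\infty/\xF^{\amalg}} \simeq \CSeg_{\mathcal{F}}(\mathcal{S})$ of \cref{cor:smcxFcseg}; and then check that, under this dictionary, the reformulated hereditary conditions ($1''$) and ($2''$) of \cref{rmk:iopdcondprime} become exactly the two instances of the ``prime'' condition \cref{eq:primecond} isolated in \cref{lem:segprimesimple}. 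This final check amounts to evaluating a Segal $\mathcal{F}$-space on a handful of short chains and matching the resulting maps with the tensoring maps $(1'')$ and $(2'')$.

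For \cref{cor:FDFcomp} the strategy is to exhibit $\DFop$ as a localization of $\mathcal{F}$. Over each $[m] \in \Dop$ the functor $P$ restricts to $P_{m}$, which has the ``augment a chain by its terminal object'' section as a left adjoint $S_{m}$, with $P_{m}S_{m} = \id$ and $P_{m}$ applied to the counit the identity; hence $P_{m}$ is a localization. Since $P$ preserves cocartesian morphisms over $\Dop$, Hinich's fibrewise criterion (\cref{propn:Hinich}) promotes this to the statement that $P$ itself is a localization, inverting precisely the morphisms of $\mathcal{F}$ lying over an identity of $\Dop$ and over a span $\mathbf{n} \xfrom{=} \mathbf{n} \to \mathbf{n}''$. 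Those are exactly the morphisms whose inversion defines $\Fun'(\mathcal{F},\mathcal{S})$, so $P^{*} \colon \Fun(\DFop,\mathcal{S}) \isoto \Fun'(\mathcal{F},\mathcal{S})$; one then checks that the Segal conditions of \cref{rmk:DFSeg} for $\Phi$ correspond termwise to those of \cref{rmk:FSeg} for $P^{*}\Phi$, and that completeness matches via the simplified criterion of \cref{lem:Seg'comp}.

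The step I expect to be the \emph{main obstacle} is pinning down the explicit combinatorial model of $\mathcal{F}$ — the unstraightening of $\xF^{\amalg}$ viewed as a commutative monoid in Segal spaces, i.e.\ as a functor $\xF_{*} \times \Dop \to \mathcal{S}$ — as the category of ``augmented chains'' $\mathbf{a}_{0} \to \cdots \to \mathbf{a}_{m} \to \mathbf{n}$ with the correct notion of morphism (restrict along $\phi$, pull back along $\mathbf{x} \hookrightarrow \mathbf{n}$, compose with $\mathbf{x} \to \mathbf{n}'$), and then correctly identifying the class of morphisms that $P$ inverts. Everything downstream — the Segal conditions, the completeness conditions, and the final composition — is bookkeeping, but this is where the translation between Lurie's finite-pointed-set combinatorics and Barwick's finite-set combinatorics genuinely takes place, and it is the delicate point on which the whole comparison rests.
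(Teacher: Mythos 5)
Your proposal matches the paper's own argument exactly: the corollary is obtained by composing the equivalence $\OpdI \isoto \CSeg'_{\mathcal{F}}(\mathcal{S})$ of \cref{thm:opdascsegF} with the inverse of the equivalence $P^{*} \colon \CSeg_{\DFop}(\mathcal{S}) \isoto \CSeg'_{\mathcal{F}}(\mathcal{S})$ of \cref{cor:FDFcomp}, and your sketches of those two inputs also follow the paper's proofs. Nothing to correct.
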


\begin{bibdiv}
\begin{biblist}
  \bib{AdamsInfLoop}{book}{
  author={Adams, John Frank},
  title={Infinite loop spaces},
  series={Annals of Mathematics Studies},
  volume={90},
  publisher={Princeton University Press},
  place={Princeton, N.J.},
  date={1978},
}

\bib{AyalaFrancisFib}{article}{
  author={Ayala, David},
  author={Francis, John},
  title={Fibrations of $\infty $-categories},
  journal={High. Struct.},
  volume={4},
  date={2020},
  number={1},
  pages={168--265},
}

\bib{BarwickOpCat}{article}{
  author={Barwick, Clark},
  title={From operator categories to higher operads},
  journal={Geom. Topol.},
  volume={22},
  date={2018},
  number={4},
  pages={1893--1959},
  eprint={arXiv:1302.5756},
}

\bib{Batanin-Kock-Weber:1510.08934}{article}{
  author={Batanin, Michael},
  author={Kock, Joachim},
  author={Weber, Mark},
  title={Regular patterns, substitudes, Feynman categories and operads},
  journal={Theory Appl. Categ.},
  volume={33},
  date={2018},
  pages={Paper No. 7, 148--192},
}

\bib{Batanin-Markl:1404.3886}{article}{
  author={Batanin, Michael},
  author={Markl, Martin},
  title={Operadic categories and duoidal Deligne's conjecture},
  journal={Adv. Math.},
  volume={285},
  date={2015},
  pages={1630--1687},
  eprint={arXiv:1404.3886},
}

\bib{BoardmanVogt}{book}{
  author={Boardman, J. Michael},
  author={Vogt, Rainer M.},
  title={Homotopy invariant algebraic structures on topological spaces},
  series={Lecture Notes in Mathematics, Vol. 347},
  publisher={Springer-Verlag, Berlin-New York},
  date={1973},
}

\bib{Borisov-Manin:0609748}{article}{
  author={Borisov, Dennis V.},
  author={Manin, Yuri I.},
  title={Generalized operads and their inner cohomomorphisms},
  conference={ title={Geometry and dynamics of groups and spaces}, },
  book={ series={Progr. Math.}, volume={265}, publisher={Birkh\"{a}user, Basel}, },
  date={2008},
  pages={247--308},
  eprint={arXiv:math/0609748},
}

\bib{Caviglia}{article}{
  author={Caviglia, Giovanni},
  title={The Dwyer-Kan model structure for enriched coloured PROPs},
  eprint={arXiv:1510.01289},
  date={2015},
}

\bib{patterns1}{article}{
  author={Chu, Hongyi},
  author={Haugseng, Rune},
  title={Homotopy-coherent algebra via Segal conditions},
  journal={Adv. Math.},
  volume={385},
  date={2021},
  pages={107733},
  eprint={arXiv:1907.03977},
}

\bib{freepres}{article}{
  author={Gepner, David},
  author={Haugseng, Rune},
  author={Nikolaus, Thomas},
  title={Lax colimits and free fibrations in $\infty $-categories},
  eprint={arXiv:1501.02161},
  journal={Doc. Math.},
  volume={22},
  date={2017},
  pages={1225--1266},
}

\bib{HackneyRobertsonProp}{article}{
  author={Hackney, Philip},
  author={Robertson, Marcy},
  title={The homotopy theory of simplicial props},
  journal={Israel J. Math.},
  volume={219},
  date={2017},
  number={2},
  pages={835--902},
}

\bib{paradj}{article}{
  author={Haugseng, Rune},
  author={Hebestreit, Fabian},
  author={Linskens, Sil},
  author={Nuiten, Joost},
  title={Lax monoidal adjunctions, two-variable fibrations and the calculus of mates},
  date={2021},
  eprint={arXiv:2011.08808},
}

\bib{Hermida:repr-mult}{article}{
  author={Hermida, Claudio},
  title={Representable multicategories},
  journal={Adv. Math.},
  volume={151},
  date={2000},
  number={2},
  pages={164--225},
}

\bib{HinichLoc}{article}{
  author={Hinich, Vladimir},
  title={Dwyer-Kan localization revisited},
  journal={Homology Homotopy Appl.},
  volume={18},
  date={2016},
  number={1},
  pages={27--48},
  eprint={arXiv:1311.4128},
}

\bib{HinichYoneda}{article}{
  eprint={arXiv:1805.07635},
  author={Hinich, Vladimir},
  title={Yoneda lemma for enriched $\infty $-categories},
  journal={Adv. Math.},
  volume={367},
  date={2020},
  pages={107129},
}

\bib{JoyalTierney}{article}{
  author={Joyal, Andr{\'e}},
  author={Tierney, Myles},
  title={Quasi-categories vs Segal spaces},
  conference={ title={Categories in algebra, geometry and mathematical physics}, },
  book={ series={Contemp. Math.}, volume={431}, publisher={Amer. Math. Soc.}, place={Providence, RI}, },
  date={2007},
  pages={277--326},
  eprint={arXiv:math/0607820},
}

\bib{Kaufmann-Ward:1312.1269}{article}{
  author={Kaufmann, Ralph M.},
  author={Ward, Benjamin C.},
  title={Feynman categories},
  journal={Ast\'{e}risque},
  number={387},
  date={2017},
  eprint={arXiv:1312.1269},
}

\bib{KellyOpd}{article}{
  author={Kelly, G. Maxwell},
  title={On the operads of J. P. May},
  journal={Repr. Theory Appl. Categ.},
  number={13},
  date={2005},
  pages={1--13},
}

\bib{HTT}{book}{
  author={Lurie, Jacob},
  title={Higher Topos Theory},
  series={Annals of Mathematics Studies},
  publisher={Princeton University Press},
  address={Princeton, NJ},
  date={2009},
  volume={170},
  note={Available from \url {http://math.ias.edu/~lurie/}},
}

\bib{HA}{book}{
  author={Lurie, Jacob},
  title={Higher Algebra},
  date={2017},
  note={Available at \url {http://math.ias.edu/~lurie/}.},
}

\bib{MacLane:props}{article}{
  author={Mac Lane, Saunders},
  title={Categorical algebra},
  journal={Bull. Amer. Math. Soc.},
  volume={71},
  date={1965},
  pages={40--106},
}

\bib{Markl:0601129}{article}{
  author={Markl, Martin},
  title={Operads and PROPs},
  conference={ title={Handbook of algebra. Vol. 5}, },
  book={ series={Handb. Algebr.}, volume={5}, publisher={Elsevier/North-Holland, Amsterdam}, },
  date={2008},
  pages={87--140},
  eprint={arXiv:math/0601129},
}

\bib{MayGeomIter}{book}{
  author={May, J. Peter},
  title={The geometry of iterated loop spaces},
  note={Lecture Notes in Mathematics, Vol. 271},
  publisher={Springer-Verlag, Berlin-New York},
  date={1972},
}

\bib{May-Thomason}{article}{
  author={May, J. Peter},
  author={Thomason, Robert},
  title={The uniqueness of infinite loop space machines},
  journal={Topology},
  volume={17},
  date={1978},
  number={3},
  pages={205--224},
}

\bib{MelliesTabareau}{article}{
  author={Melli{\`e}s, Paul-Andr{\'e} },
  author={Tabareau, Nicolas},
  title={Free models of T-algebraic theories computed as {K}an extensions},
  date={2008},
  note={Unpublished article accompanying a talk given at CT08 in Calais, available from HAL, \url {https://hal.archives-ouvertes.fr/ hal-00339331}.},
}

\bib{RezkCSS}{article}{
  author={Rezk, Charles},
  title={A model for the homotopy theory of homotopy theory},
  journal={Trans. Amer. Math. Soc.},
  volume={353},
  date={2001},
  number={3},
  pages={973--1007},
}

\bib{SegalCatCohlgy}{article}{
  author={Segal, Graeme},
  title={Categories and cohomology theories},
  journal={Topology},
  volume={13},
  date={1974},
  pages={293--312},
}

\bib{Weber:1412.7599}{article}{
  author={Weber, Mark},
  title={Operads as polynomial 2-monads},
  journal={Theory Appl. Categ.},
  volume={30},
  date={2015},
  pages={Paper No. 49, 1659--1712},
}
\end{biblist}
\end{bibdiv}

\end{document}